\newcommand{\tmmathbf}[1]{\ensuremath{\boldsymbol{#1}}}
\newcommand{\tmop}[1]{\ensuremath{\operatorname{#1}}}
\newcommand{\qede}{\hspace*{\fill}$\Diamond$\medskip}
\theoremstyle{plain}
  \newtheorem{theorem}{Theorem}
  \newtheorem{lemma}{Lemma}
  \newtheorem{corollary}{Corollary}
\theoremstyle{definition}
  \newtheorem{example}{Example}
  \newtheorem{remark}{Remark}
\renewcommand{\Re}[1]{\tmop{Re} #1}
\renewcommand{\ge}{\geqslant}
\renewcommand{\le}{\leqslant}
\newcommand{\pFq}[5]{\ensuremath{{}_{#1}F_{#2} \left( \genfrac{}{}{0pt}{}{#3}{#4} \bigg| {#5} \right)}}
\newcommand{\ipFq}[5]{\ensuremath{{}_{#1}F_{#2} \left( {#3} ; {#4} ; {#5} \right)}}
\newcommand{\MellinS}[2]{\ensuremath{\mathcal{M} \left[ {#1} ; {#2} \right]}}
\newcommand{\ift}{\int_0^\infty}
\newcommand{\md}{\mathrm{d}}
\newcommand{\id}{\, \md}
\newcommand{\Cl}{\tmop{Cl}}
\newcommand{\Res}{\tmop{Res}}
\newcommand{\QQ}{\mathbb{Q}}
\begin{document}

\title{Densities of short uniform random walks}


\author{Jonathan M. Borwein}
\address{CARMA, University of Newcastle, Australia}
\curraddr{}
\email{jonathan.borwein@newcastle.edu.au}
\thanks{}

\author{Armin Straub}
\address{Tulane University, USA}
\curraddr{}
\email{astraub@tulane.edu}
\thanks{}

\author{James Wan}
\address{CARMA, University of Newcastle, Australia}
\curraddr{}
\email{james.wan@newcastle.edu.au}
\thanks{}

\author{Wadim Zudilin}
\address{CARMA, University of Newcastle, Australia}
\curraddr{}
\email{wadim.zudilin@newcastle.edu.au}
\thanks{}

\dedicatory{{\rm with an appendix by: \uppercase{Don Zagier}}}
\address{Max-Planck-Institut f\"ur Mathematik, Bonn, Germany}
\curraddr{}
\email{don.zagier@mpim-bonn.mpg.de}


\subjclass[2010]{Primary 60G50; Secondary 33C20, 34M25, 44A10}


\maketitle

\begin{abstract}
  We study the densities of uniform random walks in the plane. A special focus
  is on the case of short walks with three or four steps and less completely
  those with five steps. As one of the main results, we obtain a hypergeometric
  representation of the density for four steps, which complements the classical
  elliptic representation in the case of three steps.  It appears unrealistic
  to expect similar results for more than five steps. New results are also
  presented concerning the moments of uniform random walks and, in particular,
  their derivatives. Relations with Mahler measures are discussed.
\end{abstract}

\section{Introduction}\label{sec:intro}

An $n$-step uniform random walk is a walk in the plane that starts at the origin and
consists of $n$ steps of length $1$ each taken into a uniformly random
direction. The study of such walks largely originated with Pearson more than a
century ago \cite{Pea05,Pea05b,Pea06} who posed the problem of determining the
distribution of the distance from the origin after a certain number of steps.
In this paper, we study the (radial) densities $p_n$ of the distance travelled in $n$
steps. This continues research commenced in \cite{bnsw-rw, bsw-rw2} where the focus
was on the moments of these distributions:
\begin{equation*}
  W_n(s):=\int_0^n p_n(t) t^{s} \id t.
\end{equation*}

The densities for walks of up to $8$ steps are depicted in Figure \ref{fig:pn}.
As established by Lord Rayleigh \cite{Ray05}, $p_n$ quickly approaches the
probability density $\frac{2 x}{n} e^{-x^2/n}$ for large $n$.  This limiting
density is superimposed in Figure \ref{fig:pn} for $n\ge5$.

\begin{figure}[htbp]
  \begin{center}
    \subfigure[$p_3$\label{fig:p3}]{\includegraphics[width=0.3\textwidth]{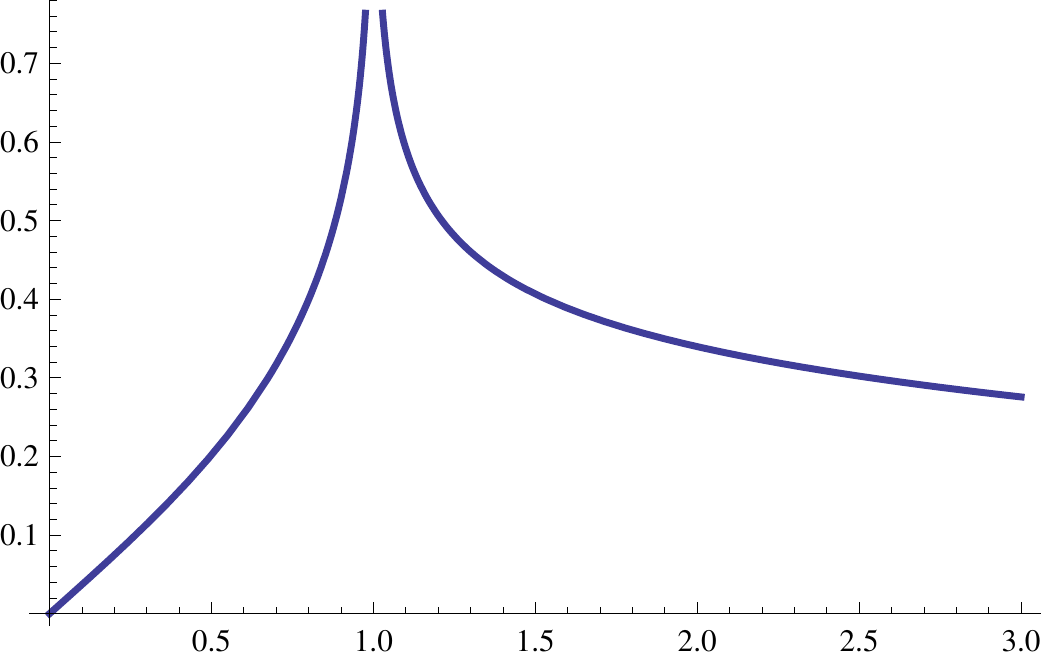}}
    \hfil
    \subfigure[$p_4$\label{fig:p4}]{\includegraphics[width=0.3\textwidth]{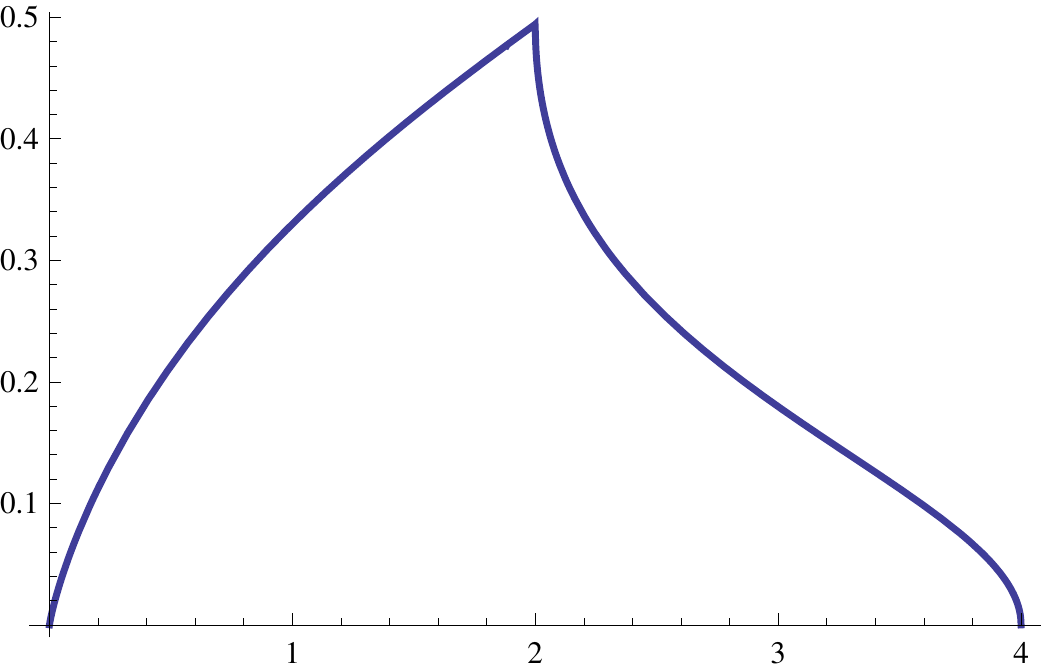}}
    \hfil
    \subfigure[$p_5$\label{fig:p5}]{\includegraphics[width=0.3\textwidth]{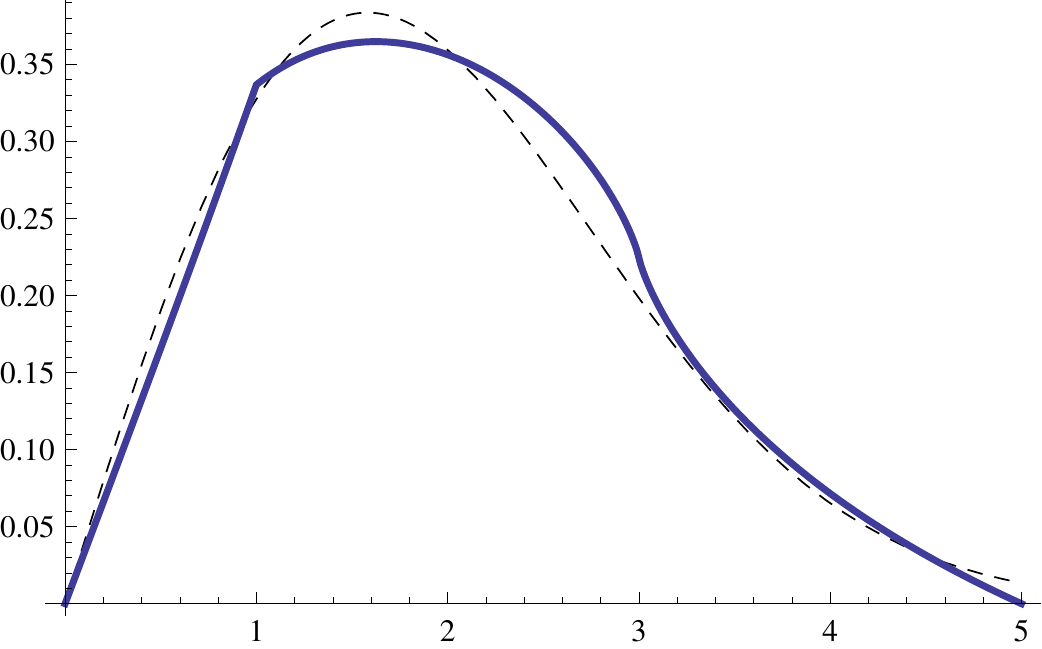}}\\
    \subfigure[$p_6$\label{fig:p6}]{\includegraphics[width=0.3\textwidth]{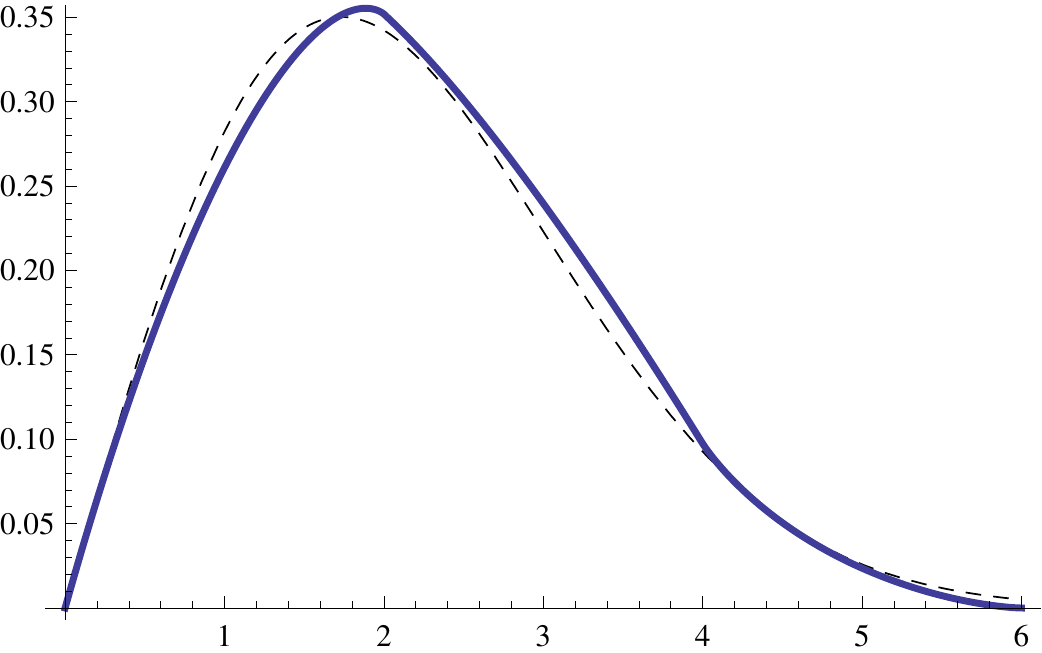}}
    \hfil
    \subfigure[$p_7$\label{fig:p7}]{\includegraphics[width=0.3\textwidth]{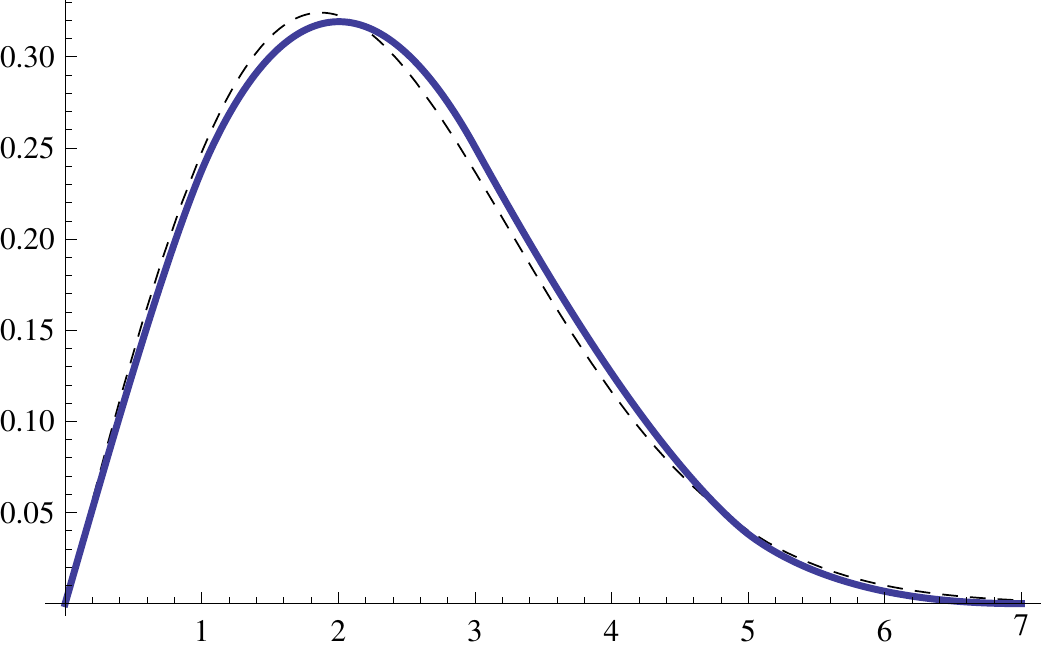}}
    \hfil
    \subfigure[$p_8$\label{fig:p8}]{\includegraphics[width=0.3\textwidth]{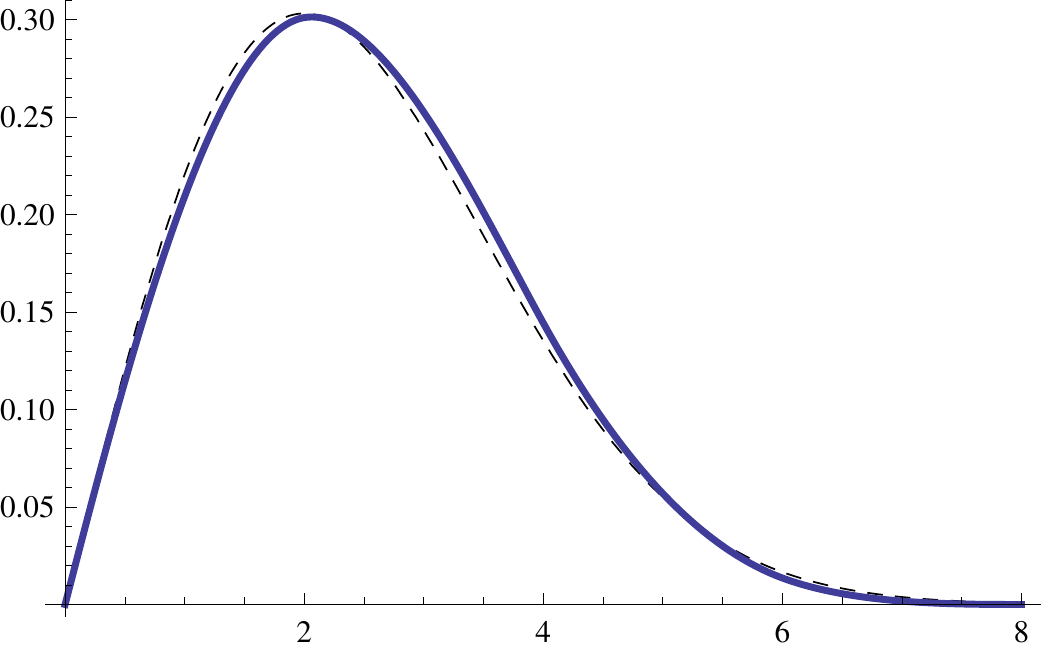}}
    \caption{Densities $p_n$ with the limiting behaviour superimposed for $n\ge5$.}
    \label{fig:pn}
  \end{center}
\end{figure}

Closed forms were only known in the cases $n=2$ and $n=3$. The evaluation, for $0\le x \le 2$,
\begin{equation}\label{eq:p2}
  p_2(x) =  \frac{2}{\pi \sqrt{4-x^2}}
\end{equation}
is elementary.  On the other hand, the density $p_3(x)$ for $0\le x \le 3$ can
be expressed in terms of elliptic integrals by
\begin{equation}\label{eq:p3ell}
  p_3(x) = \Re \left(\frac{\sqrt{x}}{\pi^2}\, K\left(\sqrt{\frac{(x+1)^3(3-x)}{16x}}\right)\right),
\end{equation}
see, e.g., \cite{Pea06}. One of the main results of this paper is a closed form
evaluation of $p_4$ as a hypergeometric function given in Theorem
\ref{thm:p4hyp04}.  In \eqref{eq:p3hyp} we also provide a single hypergeometric
closed form for $p_3$ which, in contrast to \eqref{eq:p3ell}, is real and valid
on all of $[0,3]$. For convenience, we list these two closed forms here:
\begin{align}
  p_3(x) &= \frac{2\sqrt{3}}{\pi}\,\frac{x}{\left(3+x^2\right)}
  \pFq21{\frac13,\frac23}{1}{\frac{x^2 \left( 9-x^2 \right)^2}{\left( 3+x^2 \right)^3}}, \\
  p_4(x) &= \frac2{\pi^2}\,\frac{\sqrt {16-x^2}}x\,
    \Re \pFq32{\frac12, \frac12, \frac12}{\frac56,\frac76}
      {\frac {\left( 16-x^2 \right)^3}{108 x^4}}.
\end{align}
We note that while \emph{Maple}  handled these well to high
precision, \emph{Mathematica} struggled, especially with the analytic
continuation of the $_3F_2$ when the argument is greater than 1.

A striking feature  of the 3- and 4-step random walk densities is
their modularity. It is this circumstance which not only allows us
to express them via hypergeometric series, but also makes them a
remarkable object of  mathematical study.

 This paper is structured as follows: In
Section \ref{sec:pn} we give general results for the densities $p_n$
and prove for instance that they satisfy certain linear differential
equations. In Sections \ref{sec:p3}, \ref{sec:p4}, and \ref{sec:p5}
we provide special results for $p_3$, $p_4$, and $p_5$ respectively.
Particular interest is taken in the behaviour near the points where
the densities fail to be smooth. In Section \ref{sec:Wndiff} we
study the derivatives of the moment function and make a connection
to multidimensional Mahler measures. Finally in Section \ref{sec:Wn}
we provide some related new evaluations of moments and so resolve a
central case of an earlier conjecture on convolutions of moments in
\cite{bsw-rw2}.

The amazing story of  the appearance of
Theorem~\ref{thm:deleadingcoeffx} is worth mentioning here. The
theorem was a conjecture in an earlier version of this manuscript,
and one of the present authors communicated it to D.~Zagier. That
author was surprised to learn that Zagier had already been asked for
a proof of exactly the same identities a little earlier, by
P.~Djakov and B.~Mityagin.

Those authors had in fact proved the theorem already in 2004 (see
\cite[Theorem 4.1]{dm1} and \cite[Theorem 8]{dm2}) during their
study of the asymptotics of the spectral gaps of a Schr\"odinger
operator with a two-term potential --- their proof was indirect, so
that we should  never have come across the identities without the
accident of asking the same person the same question! Djakov and
Mityagin asked Zagier about the possibility of a  direct proof of
their identities (the subject of Theorem~\ref{thm:deleadingcoeffx}),
and he gave a very neat  and purely combinatorial answer. It is this
proof which is herein presented in the Appendix.

We close this introduction with a historical remark illustrating the
fascination arising from these densities and their curious geometric features.
H.~Fettis devotes the entire paper \cite{fettis63} to proving that $p_5$ is not
linear on the initial interval $[0,1]$ as ruminated upon by Pearson
\cite{Pea06}. This will be explained in Section~\ref{sec:p5}.

\section{The densities $p_n$}\label{sec:pn}

It is a classical result of Kluyver \cite{Klu06} that $p_n$ has the following
Bessel integral representation:
\begin{equation}\label{eq:pnbessel}
  p_n(x) = \ift x t J_0(x t)J_0^n(t) \id t.
\end{equation}
Here $J_\nu$ is the \emph{Bessel function of the first kind} of order $\nu$.

\begin{remark}
  Equation \eqref{eq:pnbessel} naturally generalizes to the case of nonuniform
  step lengths.  In particular, for $n=2$ and step lengths $a$ and $b$ we
  record (see \cite[p. 411]{watson-bessel} or \cite[2.3.2]{hughes-rw}; the
  result is attributed to Sonine) that the
  corresponding density is
  \begin{align}\label{eq:p2ab}
    p_2(x;a,b) &= \ift x t J_0(x t)J_0(at)J_0(bt) \id t \nonumber\\
    &= \frac{2x}{\pi \sqrt{( (a+b)^2 - x^2 )( x^2 - (a-b)^2 )}}
  \end{align}
  for $|a-b| \le x \le a+b$ and $p_2(x;a,b)=0$ otherwise. Observe how \eqref{eq:p2ab}
  specializes to \eqref{eq:p2} in the case $a=b=1$.

  In the case $n=3$ the density $p_3(x;a,b,c)$ has been evaluated by Nicholson
  \cite[p.~414]{watson-bessel} in terms of elliptic integrals directly
  generalizing \eqref{eq:p3ell}. The corresponding extensions for four and more
  variables appear much less accessible.
  \qede
\end{remark}

It is visually clear from the graphs in Figure \ref{fig:pn} that $p_n$ is
getting smoother for increasing $n$. This can be made precise from
(\ref{eq:pnbessel}) using the asymptotic formula for $J_0$ for large arguments
and dominated convergence:

\begin{theorem} For  each integer $n \ge 0$,
  the density $p_{n+4}$ is $\lfloor n/2 \rfloor$ times continuously differentiable.
\end{theorem}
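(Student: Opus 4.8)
The plan is to estimate the integral in Kluyver's formula~\eqref{eq:pnbessel} together with all formal derivatives in $x$ up to order $\lfloor n/2\rfloor$, and then invoke differentiation under the integral sign justified by dominated convergence. First I would recall the large-argument asymptotics of the Bessel function: $J_0(t) = \sqrt{2/(\pi t)}\,\cos(t - \pi/4) + O(t^{-3/2})$ as $t\to\infty$; in particular $|J_0(t)| \le C\,t^{-1/2}$ for $t\ge1$, so that $|J_0(t)^{n+4}| \le C^{n+4}\,t^{-(n+4)/2}$ for large $t$, while near the origin $J_0$ and all its derivatives are bounded and smooth. Thus the integrand $x\,t\,J_0(xt)\,J_0(t)^{n+4}$ decays like $t^{-(n+4)/2}\cdot t \cdot t^{-1/2} = t^{-(n+3)/2}$ once we also use $|J_0(xt)| \le C (xt)^{-1/2}$; one has to be a little careful because we want bounds uniform in $x$ on compact subsets of $(0,\infty)$ (or on all of $[0,\infty)$ for fixed $x$), but the factor $J_0(xt)$ can simply be bounded by $1$ when we do not need its decay, giving integrand $O(t^{-(n+3)/2})$ which is integrable at $\infty$ provided $(n+3)/2 > 1$, i.e.\ $n\ge0$, with room to spare.

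Next I would differentiate the integrand $k$ times with respect to $x$ for $0 \le k \le \lfloor n/2\rfloor$. Each $\partial_x$ falls on the factor $x\,J_0(xt)$, and using $J_0'(u) = -J_1(u)$ together with the recurrences for Bessel functions, one sees that $\partial_x^k\bigl(x\,J_0(xt)\bigr)$ is a finite sum of terms of the shape $t^j\,x^{i}\,J_{\nu}(xt)$ with $j \le k$ and $\nu\in\{0,1\}$ (the worst term being $t^k x\, J_0^{(k)}(xt)$ up to sign, contributing a factor $t^k$). Hence the $k$-th formal derivative of the full integrand is, up to constants depending on $n,k$, a finite sum of terms bounded by $t^{k}\cdot t\cdot |J_\nu(xt)|\cdot |J_0(t)|^{n+4}$. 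Using $|J_0(t)^{n+4}| \le C\,t^{-(n+4)/2}$ and $|J_\nu(xt)| \le 1$ (again not needing its decay), this is $O\bigl(t^{\,k+1-(n+4)/2}\bigr)$, which is integrable at infinity exactly when $k+1-(n+4)/2 < -1$, i.e.\ $k < (n+4)/2 - 2 = n/2$; more carefully one squeezes out the boundary case by also using the genuine $t^{-1/2}$ decay of $J_\nu(xt)$ for $x$ in a fixed compact subset of $(0,\infty)$, which improves the exponent to $k + 1/2 - (n+4)/2$ and gives integrability for all $k \le \lfloor n/2\rfloor$. Near $t=0$ everything is smooth and the factor $t$ keeps the integrand bounded, so there is no issue there.

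Having produced, for each $k\le\lfloor n/2\rfloor$, a single integrable majorant valid for all $x$ in a neighbourhood of any given point of $(0,\infty)$, the classical theorem on differentiation under the integral sign (via dominated convergence applied to difference quotients) shows that $p_{n+4}$ is $k$ times differentiable there with $p_{n+4}^{(k)}(x) = \int_0^\infty \partial_x^k\bigl(xt\,J_0(xt)J_0^{n+4}(t)\bigr)\id t$, and the same dominated-convergence argument shows this last integral is continuous in $x$. Taking $k = \lfloor n/2\rfloor$ gives the claim on the open interval; a small separate remark handles the endpoint $x=0$, where the integrand and its $x$-derivatives are manifestly smooth in $x$ with the same majorants (the vanishing factor $x$ only helps). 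I expect the main technical obstacle to be the boundary/uniformity bookkeeping: making sure the majorant is simultaneously integrable \emph{and} locally uniform in $x$ at the top derivative order $k=\lfloor n/2\rfloor$, which forces one to exploit the $t^{-1/2}$ decay of the factor $J_0(xt)$ (hence the restriction to compact subsets of $(0,\infty)$) rather than merely bounding it by $1$ — a parity-sensitive estimate, which is exactly why the floor $\lfloor n/2\rfloor$ rather than $\lceil n/2\rceil$ appears.
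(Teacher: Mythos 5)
Your proposal is exactly the argument the paper has in mind: the paper's entire ``proof'' is the one sentence preceding the theorem, namely that the claim follows from Kluyver's formula \eqref{eq:pnbessel}, the large-argument asymptotics of $J_0$, and dominated convergence, and you have filled in precisely those details, including the key parity point that for even $n$ the top-order case needs the extra $t^{-1/2}$ decay of the factor $J_0(xt)$. The one place your writeup overreaches is the parenthetical treatment of $x=0$: there you cannot invoke the decay of $J_0(xt)$, and after $k$ differentiations the helpful factor $x$ is gone, so for even $n$ and $k=n/2$ your crude majorant degenerates to $t^{-1}$ and is not integrable; continuity of the top derivative at $0^+$ is true but requires a different argument (oscillation of $J_0(xt)$, or the pole structure of $W_{n+4}$ as in Remark \ref{rk:pnat0}). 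Since the theorem is otherwise established on $(0,\infty)$ and the paper itself does not address the endpoint, this is a minor repair rather than a flaw in the method.
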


On the other hand, we note from Figure \ref{fig:pn} that the only
points preventing $p_n$ from being smooth appear to be integers.
This will be made precise in Theorem \ref{thm:pnde}.

To this end, we recall a few things about the $s$-th moments $W_n(s)$ of the
density $p_n$ which are given by
\begin{equation}\label{def:Wns}
  W_n (s) = \ift x^s p_n(x) \id x =  \int_{[0, 1]^n} \left| \sum_{k = 1}^n e^{2 \pi x_k i}
   \right|^s \id \tmmathbf{x}.
\end{equation}
Starting with the right-hand side, these moments had been
investigated in \cite{bnsw-rw, bsw-rw2}.  There it was shown that
$W_n(s)$ admits an \textit{analytic continuation} to all of the
complex plane with poles of at most order two at certain negative
integers. In particular, $W_3(s)$ has simple poles at $s=-2, -4, -6,
\ldots$ and $W_4(s)$ has double poles at these integers \cite[Thm.
6, Ex. 2 \& 3]{bnsw-rw}.

Moreover, from the combinatorial evaluation
\begin{equation}\label{eq:Wn-even}
  W_n (2 k) = \sum_{a_1 + \cdots + a_n = k} \binom{k}{a_1, \ldots, a_n}^2
\end{equation}
for integers $k\ge0$ it followed that $W_n(s)$ satisfies a
functional equation, as in \cite[Ex. 1]{bnsw-rw}, coming from the
inevitable recursion that exists for the right-hand side of
(\ref{eq:Wn-even}) . For instance,
\[ (s + 4)^2 W_3 (s + 4) - 2 (5 s^2 + 30 s + 46) W_3 (s + 2) + 9 (s + 2)^2 W_3 (s) = 0,\]
and equation (\ref{eq:W4fe}) below.

 The first part of equation
(\ref{def:Wns}) can be rephrased as saying that $W_n(s-1)$ is the
\emph{Mellin transform} of $p_n$ (\cite{ml-xmellin}). We denote this
by $W_n(s-1)=\MellinS{p_n}{s}$. Conversely, the density $p_n$ is the
\emph{inverse Mellin transform} of $W_n(s-1)$. We intend to exploit
this relation as detailed for $n=4$ in the following example.

\begin{example}[Mellin transforms]\label{eg:p4de}
  For $n=4$, the moments $W_4(s)$ satisfy the functional equation
  \begin{equation}\label{eq:W4fe}
    (s+4)^3 W_4(s+4) - 4(s+3)(5s^2+30s+48) W_4(s+2) + 64(s+2)^3 W_4(s) = 0.
  \end{equation}
  Recall the following rules for the Mellin transform: if $F (s) = \MellinS{f}{s}$
  then in the appropriate strips of convergence
  \begin{itemize}
    \item $\MellinS{x^{\mu} f (x)}{s} = F (s + \mu)$,
    \item $\MellinS{D_x f (x)}{s} = - (s - 1) F (s - 1)$.
  \end{itemize}
  Here, and below, $D_x$ denotes differentiation with respect to $x$, and, for
  the second rule to be true, we have to assume, for instance, that $f$ is
  continuously differentiable.

  Thus, purely formally, we can translate the functional equation
  (\ref{eq:W4fe}) of $W_4$ into the differential equation $A_4 \cdot p_4 (x) =
  0$ where $A_4$ is the operator
  \begin{align}\label{eq:p4de}
    A_4 &= x^4 (\theta+1)^3 - 4x^2 \theta (5\theta^2+3) + 64(\theta-1)^3 \\
    &= (x - 4) (x - 2) x^3 (x + 2) (x + 4) D_x^3 + 6 x^4
    \left( x^2 - 10 \right) D_x^2 \label{eq:p4deD}\\
    &\quad+ x \left( 7 x^4 - 32 x^2 + 64 \right) D_x + \left(
    x^2 - 8 \right)  \left( x^2 + 8 \right) . \nonumber
  \end{align}
  Here $\theta=x D_x$.
  However, it should be noted that $p_4$ is not continuously differentiable.
  Moreover, $p_4(x)$ is approximated by a constant multiple of $\sqrt{4-x}$
  as $x\to4^-$ (see Theorem \ref{thm:p4at4}) so that the second derivative of $p_4$ is
  not even locally integrable. In particular, it does not have a Mellin
  transform in the classical sense.
\qede
\end{example}

\begin{theorem}\label{thm:pnde}
  Let an integer $n\ge1$ be given.
  \begin{itemize}
    \item The density $p_n$ satisfies a differential equation of order $n - 1$.

    \item If $n$ is even (respectively odd) then $p_n$ is real analytic except
      at $0$ and the even (respectively odd) integers $m \leq n$.
  \end{itemize}
\end{theorem}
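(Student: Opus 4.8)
The plan is to obtain the differential equation for $p_n$ from the known functional equation for $W_n(s)$ by the Mellin-transform dictionary, exactly as illustrated for $n=4$ in Example~\ref{eg:p4de}, and then to read off the real-analyticity away from the integers from the structure of that operator together with the boundary behaviour. For the first bullet, recall from \eqref{eq:Wn-even} that $W_n(2k)$ is given by a sum of squared multinomials, whose summand satisfies a linear recursion in $k$ with polynomial coefficients coming from the creative-telescoping (Zeilberger) recursion for $\binom{k}{a_1,\dots,a_n}^2$. This produces a recursion $\sum_j c_j(s) W_n(s+2j)=0$ valid first for $s=2k$ and then, by the established meromorphic continuation of $W_n$, for all $s$; the order in $s$ is $\lceil n/2\rceil$ in terms of the shift $s\mapsto s+2$, which one checks translates under the Mellin rules $\mathcal{M}[x^\mu f;s]=F(s+\mu)$ and $\mathcal{M}[D_xf;s]=-(s-1)F(s-1)$ into a differential operator $A_n$ in $\theta=xD_x$ and $x$ of $x$-degree $n$ and $\theta$-degree $n-1$ — equivalently an operator of order $n-1$ in $D_x$ whose leading coefficient is (up to scalar) $\prod_{m}(x-m)$ over the relevant integers $m$, just as in \eqref{eq:p4deD}. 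Since $W_n(s-1)=\mathcal{M}[p_n;s]$ and the inverse Mellin transform is faithful on the appropriate vertical strip, $A_n\cdot p_n=0$ holds at least as a distribution on $(0,\infty)$; a short regularity bootstrap (using that $p_{n+4}$ is $\lfloor n/2\rfloor$ times continuously differentiable, from the Theorem just above) upgrades this to a classical identity on each open interval between consecutive integers where $p_n$ is smooth enough, giving the order-$(n-1)$ ODE.

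For the second bullet, the point is that $A_n$ has polynomial coefficients with leading coefficient vanishing precisely at $x=0$ and at the integers $m\le n$ of the appropriate parity (even $m$ when $n$ is even, odd $m$ when $n$ is odd), which are exactly the possible singular points of the ODE. Away from these points, $A_n$ is a regular (non-singular) linear ODE with real-analytic — indeed polynomial — coefficients, so every solution, in particular $p_n$, is real analytic there by the Cauchy–Kovalevskaya / Frobenius theory for ordinary points. Concretely: on each open interval $(m,m+1)$ (within $[0,n]$) not containing a forbidden point in its interior, $p_n$ coincides with an analytic solution of $A_n y=0$; and $p_n\equiv 0$ outside $[0,n]$, which is trivially analytic there. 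The parity statement — that only the even (resp.\ odd) integers up to $n$ are singular — comes from identifying the zero set of the leading coefficient of $A_n$; for $n=4$ this set is $\{0,\pm2,\pm4\}$ as visible in \eqref{eq:p4deD}, and in general one shows by the recursion in $s$ that the leading coefficient of $A_n$ (the coefficient of the top shift, evaluated appropriately) is a constant times $x\prod_{\substack{1\le m\le n\\ m\equiv n\ (2)}}(x-m)$, possibly with the factor $x$ replaced by a power; the asymptotics of $W_n$ near its poles (simple or double, at negative even integers, as recalled from \cite{bnsw-rw}) control the indicial behaviour at $x=0$ and confirm that $0$ is genuinely singular while no other integers of the wrong parity contribute.

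The main obstacle I expect is making the transition from the \emph{formal} Mellin calculation to a rigorous statement, precisely the subtlety flagged in Example~\ref{eg:p4de}: $p_n$ is only finitely differentiable and its higher derivatives need not be locally integrable near the endpoints $x=0$ and $x=n$, so $A_n\cdot p_n=0$ cannot be asserted naively as an identity of Mellin transforms. The clean way around this is to work interval by interval on $(0,n)$ away from the bad integers, where by the smoothing theorem $p_n$ has enough derivatives for the Mellin differentiation rule to apply to a suitable truncation, or alternatively to verify $A_n\cdot p_n=0$ directly from Kluyver's Bessel integral \eqref{eq:pnbessel} by differentiating under the integral sign and using the Bessel ODE $t^2J_0''+tJ_0'+t^2J_0=0$ to see that $J_0(t)^n$ — hence $xtJ_0(xt)J_0(t)^n$ — is annihilated, as a function of $x$, by the operator obtained from the symmetric-power ODE of $J_0^n$; this avoids all Mellin-domain bookkeeping and simultaneously pins down the leading coefficient and its parity-selected integer zeros. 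I would therefore organize the proof so that the recursion \eqref{eq:Wn-even}$\to$ODE step supplies the \emph{shape} of $A_n$ (order $n-1$, leading term a product of linear factors), while the Bessel-integral argument supplies the \emph{rigour} that $p_n$ actually satisfies it, and then conclude real-analyticity at ordinary points by the standard ODE regularity theorem.
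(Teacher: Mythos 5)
Your overall architecture matches the paper's: translate the functional equation of $W_n$ into an operator $A_n$ via the Mellin rules, identify the singular points from the zeros of the leading coefficient, and conclude analyticity at ordinary points from ODE regularity. However, the step you yourself flag as the main obstacle --- making $A_n\cdot p_n=0$ rigorous --- is exactly where your two proposed fixes break down, and the idea the paper actually uses is missing. Your option (a), ``work interval by interval where by the smoothing theorem $p_n$ has enough derivatives,'' cannot work: the smoothing theorem gives only about $\lfloor (n-4)/2\rfloor$ global continuous derivatives, while the operator has order $n-1$; for the central cases $n=3,4,5$ the density is at best continuous, nowhere near $C^{n-1}$, and assuming smoothness on the good intervals is precisely the conclusion you are trying to reach. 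Your option (b), differentiating Kluyver's integral \eqref{eq:pnbessel} under the integral sign, fails naively because each $x$-derivative of $xtJ_0(xt)$ brings down a factor of $t$, so after $k$ differentiations the integrand grows like $t^{k+1/2-n/2}$ and the integral diverges for $k$ anywhere near $n-1$; making this route rigorous would require a regularization argument you do not supply. The paper's resolution is different and is the key missing idea: since $p_n$ is locally integrable and compactly supported, it has a Mellin transform \emph{in the distributional sense}, so the functional equation rigorously yields that $p_n$ is a \emph{weak} solution of $A_n\cdot y=0$; then a hypoellipticity result for ordinary differential operators with non-vanishing leading coefficient (the paper cites H\"ormander, Cor.~3.1.6) upgrades the weak solution to a classical one on each interval avoiding the zeros of the leading coefficient, and analyticity of the polynomial coefficients gives real analyticity there. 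Your appeal to Cauchy--Kovalevskaya presupposes a classical solution and so does not close this loop.

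A second, smaller gap: you assert that ``one shows by the recursion in $s$'' that the leading coefficient is $x^{n-1}\prod_{2|(m-n)}(x^2-m^2)$ up to the parity-selected integers. This is not a routine computation; in the paper it is equivalent to the combinatorial identity of Theorem~\ref{thm:deleadingcoeffx} (Djakov--Mityagin, with Zagier's proof occupying the Appendix), verified directly only for small $n$ via Theorem~\ref{thm:verrill}. Since Theorem~\ref{thm:pnde} is stated for all $n$, this identification needs either the general identity or an explicit restriction to the $n$ actually used.
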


\begin{proof}
  As illustrated for $p_4$ in Example \ref{eg:p4de}, we formally use the Mellin
  transform method to translate the functional equation of $W_n$ into a
  differential equation $A_n \cdot y(x) = 0$. Since $p_n$ is locally integrable
  and compactly supported, it has a Mellin transform in the distributional
  sense as detailed for instance in \cite{ml-xmellin}. It follows rigorously that $p_n$
  solves $A_n \cdot y(x) = 0$ in a distributional sense.  In other words, $p_n$
  is a weak solution of this differential equation. The degree of this equation
  is $n - 1$ because the functional equation satisfied by $W_n$ has
  coefficients of degree $n - 1$ as shown in \cite[Thm. 1]{bnsw-rw}.

  The leading coefficient of the differential equation (in terms of $D_x$ as in
  \eqref{eq:p4deD}) turns out to be
  \begin{equation}\label{eq:deleadingcoeff}
    x^{n - 1} \prod_{2|(m-n)} (x^2 - m^2)
  \end{equation}
  where the product is over the even or odd integers $1 \leq m \leq n$
  depending on whether $n$ is even or odd. This is discussed below
  in Section \ref{sec:deleadingcoeff}.

  Thus the leading coefficient of the differential equation is nonzero on $[0,
  n]$ except for $0$ and the even or odd integers already mentioned. On each interval
  not containing these points it follows, as described for instance in
  \cite[Cor. 3.1.6]{hoermander-lpde1}, that $p_n$ is in fact a classical solution of
  the differential equation. Moreover the analyticity of the coefficients,
  which are polynomials in our case, implies that $p_n$ is piecewise real
  analytic as claimed.
\end{proof}

\begin{remark}\label{rk:pnat0}
  It is one of the basic properties of the Mellin transform, see for instance
  \cite[Appendix B.7]{anacomb}, that the asymptotic behaviour of a function at
  zero is determined by the poles of its Mellin transform which lie to the left
  of the fundamental strip. It is shown in \cite{bnsw-rw} that the poles of
  $W_n(s)$ occur at specific negative integers and are at most of second order.
  This translates into the fact that $p_n$ has an expansion at $0$ as a power
  series with additional logarithmic terms in the presence of double poles.
  This is made explicit in the case of $p_4$ in Example \ref{eg:W4p4mellinat0}.
\end{remark}

\subsection{An explicit recursion}\label{sec:deleadingcoeff}

We close this section by providing details for the claim made in
\eqref{eq:deleadingcoeff}.  Recall that the even moments $ f_n(k) := W_n (2 k)
$ satisfy a recurrence of order $\lambda:=\lceil n / 2 \rceil$ with polynomial
coefficients of degree $n - 1$ (see \cite{bnsw-rw}). An entirely explicit
formula for this recurrence is given in \cite{verrill}:

\begin{theorem}\label{thm:verrill}
  \begin{equation}\label{eq:recverrill}
    \sum_{j\ge0} \left[ k^{n+1} \sum_{\alpha_1,\ldots,\alpha_j}
      \prod_{i=1}^j (-\alpha_i)(n+1-\alpha_i) \left( \frac{k-i}{k-i+1} \right)^{\alpha_i-1}
      \right] f_n(k-j) = 0
  \end{equation}
  where the sum is over all sequences $\alpha_1,\ldots,\alpha_j$ such that $0\le\alpha_i\le n$
  and $\alpha_{i+1}\le\alpha_i-2$.
\end{theorem}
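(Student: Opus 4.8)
We outline a proof that realises the recurrence \eqref{eq:recverrill} as the Taylor‑coefficient recursion of a \emph{symmetric power} of a second–order operator of Bessel type. First I would pass from the moments to a generating function: starting from \eqref{eq:Wn-even} and interchanging the two summations one gets
\[
  Y_n(t)\assign\sum_{k\ge0}\frac{W_n(2k)}{(k!)^2}\,t^k
  =\Bigl(\sum_{a\ge0}\frac{t^a}{(a!)^2}\Bigr)^{\!n}=I_0\bigl(2\sqrt t\,\bigr)^{n},
\]
since $W_n(2k)/(k!)^2=\sum_{a_1+\cdots+a_n=k}\prod_i 1/(a_i!)^2$ is precisely the $k$-th Taylor coefficient of the $n$-th power of $\sum_a t^a/(a!)^2=I_0(2\sqrt t)$, with $I_0$ the modified Bessel function of order zero. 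A one–line check (for instance $k^2/(k!)^2=1/((k-1)!)^2$, or Bessel's equation) shows that $w\assign I_0(2\sqrt t)$ is annihilated by $\mathcal L\assign\theta^2-t$, where $\theta\assign t\,\md/\md t$; hence $Y_n=w^{n}$ is annihilated by $\operatorname{Sym}^{n}\mathcal L$, the monic operator of order $n+1$ whose solution space is spanned by the products $w_1\cdots w_n$ of solutions of $\mathcal L$. One then checks that the $t$-degree of $\operatorname{Sym}^{n}\mathcal L$ equals $\lceil n/2\rceil$, which is what produces a recurrence of the order $\lambda$ recalled just before the theorem.

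The next step is to make $\operatorname{Sym}^{n}\mathcal L$ completely explicit. In the basis $m_\alpha\assign w^{\,n-\alpha}(\theta w)^{\alpha}$, $\alpha=0,1,\dots,n$, the single relation $\theta^{2}w=t\,w$ yields the tridiagonal action
\[
  \theta\,m_\alpha=(n-\alpha)\,m_{\alpha+1}+\alpha\,t\,m_{\alpha-1},
\]
where the would‑be $m_{n+1}$ term drops out on its own. Iterating $\theta$ starting from $m_0=w^{n}=Y_n$ therefore expresses $m_0,\theta m_0,\dots,\theta^{\,n+1}m_0$ as $\QQ[t]$-combinations of $m_0,\dots,m_n$, and the unique linear dependence among these $n+2$ vectors is the identity $\operatorname{Sym}^{n}\mathcal L\cdot Y_n=0$. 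Extracting the coefficient of $t^{k}$ from this identity and clearing the factor $(k!)^2/((k-j)!)^2=\prod_{i=1}^{j}(k-i+1)^{2}$ turns it into a recurrence $\sum_{j\ge0}c_{n,j}(k)\,f_n(k-j)=0$ of the correct order, with $f_n(k)=W_n(2k)$.

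What remains is to identify $c_{n,j}(k)$ with the bracketed coefficient in \eqref{eq:recverrill}, and this is where I expect the real work to lie. Each factor of $t$ generated during the iteration arises from a move $\alpha\mapsto\alpha-1$ in the tridiagonal recursion, and the admissible ``histories'' of the $j$ such moves should be exactly the sequences $\alpha_1,\dots,\alpha_j$ with $0\le\alpha_i\le n$ and $\alpha_{i+1}\le\alpha_i-2$; the weight $(-\alpha_i)(n+1-\alpha_i)$ — note this is the eigenvalue of $\theta^{2}-(n+1)\theta$ on $t^{\alpha_i}$ — records the off‑diagonal entries $(n-\alpha)$ and $\alpha$, while the telescoping factors $\bigl((k-i)/(k-i+1)\bigr)^{\alpha_i-1}$ bookkeep the shifts $\theta t=t(\theta+1)$ picked up when the $i$-th $t$ is commuted past the remaining $\theta$'s. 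Pinning this combinatorics down exactly — in particular deriving the constraint $\alpha_{i+1}\le\alpha_i-2$ and the precise rational factors out of the noncommutative iteration — is the main obstacle; it is cleanest by induction on $n$ (equivalently on $j$) using the recursive structure of symmetric powers. Alternatively, since for each fixed $n$ the identity \eqref{eq:recverrill} is a finite statement, one can verify it by comparing against the recurrence for the $f_n(k)$ already produced in \cite{bnsw-rw}, on sufficiently many values of $k$ and together with the order and leading–coefficient data recorded there.
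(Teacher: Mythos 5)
First, note that the paper does not actually prove this statement: Theorem~\ref{thm:verrill} is quoted verbatim from \cite{verrill}, so there is no internal proof to compare against. Your framework is nonetheless the right one and is essentially the route taken in the cited source: the identity $\sum_k W_n(2k)t^k/(k!)^2 = I_0(2\sqrt t)^n$ is correct, $I_0(2\sqrt t)$ is indeed annihilated by $\theta^2-t$, the tridiagonal action $\theta m_\alpha=(n-\alpha)m_{\alpha+1}+\alpha t\,m_{\alpha-1}$ on $m_\alpha=w^{n-\alpha}(\theta w)^\alpha$ is verified correctly, and the symmetric-power argument does produce \emph{a} recurrence of order $\lceil n/2\rceil$ with polynomial coefficients of degree $n-1$. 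Your observation that $(-\alpha)(n+1-\alpha)$ is the eigenvalue of $\theta^2-(n+1)\theta$ on $t^\alpha$ is also a genuinely useful signpost.

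However, there is a real gap, and you have located it yourself: everything you actually establish (existence, order, and coefficient degree of the recurrence) was already known from \cite{bnsw-rw} and is restated in the paragraph preceding the theorem. The entire content of Theorem~\ref{thm:verrill} is the \emph{explicit closed form} of the bracketed coefficient --- the sum over sequences with $\alpha_{i+1}\le\alpha_i-2$, the weights $(-\alpha_i)(n+1-\alpha_i)$, and the rational factors $\bigl((k-i)/(k-i+1)\bigr)^{\alpha_i-1}$ --- and this is precisely the step you defer as ``the main obstacle.'' Until the bookkeeping of the noncommutative iteration is carried out and shown to reproduce exactly these data (including why the admissible histories are governed by the gap-$2$ condition rather than the naive $\alpha_{i+1}\le\alpha_i-1$ one would first guess from a lattice-path picture), the proposal is a plausible plan rather than a proof. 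Your fallback --- comparing against the recurrence of \cite{bnsw-rw} at sufficiently many $k$ --- only verifies the identity for each \emph{fixed} $n$ (and even then requires knowing that the recurrence being compared against is the minimal one), so it cannot establish the theorem, which is a statement uniform in $n$; indeed the paper itself only performs such finite verification for $n\le1000$ and explicitly relies on \cite{verrill} for the general claim.
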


Observe that \eqref{eq:deleadingcoeff} is easily checked for each fixed $n$ by
applying Theorem \ref{thm:verrill}. We explicitly checked the cases $n\le1000$
(using a recursive formulation of Theorem \ref{thm:verrill} from
\cite{verrill}) while only using this statement for $n\le5$ in this paper. The
fact that \eqref{eq:deleadingcoeff} is true in general is recorded and made
more explicit in Theorem \ref{thm:deleadingcoeffx} below.

For fixed $n$, write the recurrence for $f_n(k)$ in the form
$\sum_{j=0}^{n-1} k^j q_j(K)$ where $q_j$ are polynomials and $K$ is the
shift $k \to k+1$. Then $q_{n-1}$ is the characteristic polynomial of this
recurrence, and, by the rules outlined in Example \ref{eg:p4de}, we find that
the differential equation satisfied by $p_n(x)$ is of the form $ q_{n-1}(x^2)
\theta^{n-1} + \cdots, $ where $\theta = x D_x$ and the dots indicate terms of
lower order in $\theta$.

We claim that the characteristic polynomial of the recurrence
\eqref{eq:recverrill} satisfied by $f_n(k)$ is $ \prod_{2|(m-n)} (x - m^2) $
where the product is over the integers $1 \leq m \leq n$ such that $m\equiv n$
modulo $2$.  This implies \eqref{eq:deleadingcoeff}.
By Theorem \ref{thm:verrill} the characteristic polynomial is
\begin{equation}\label{eq:lcverrill}
  \sum_{j=0}^\lambda \left[ \sum_{\alpha_1,\ldots,\alpha_j}
    \prod_{i=1}^j (-\alpha_i)(n+1-\alpha_i) \right] x^{\lambda-j}
\end{equation}
where $\lambda=\lceil n / 2 \rceil$ and the sum is again over all
sequences $\alpha_1,\ldots,\alpha_j$ such that $0\le\alpha_i\le n$
and $\alpha_{i+1}\le\alpha_i-2$. The claimed evaluation is thus
equivalent to the following identity, first proven by P.~Djakov and
B.~Mityagin \cite{dm1,dm2}. Zagier's more direct and purely
combinatorial proof  is given in the Appendix.

\begin{theorem}\label{thm:deleadingcoeffx}
  For all integers $n,j\ge1$,
  \begin{equation}\label{eq:exconj}
   \sum_{ 0 \leqslant m_1, \ldots, m_j < n / 2 \atop  m_i < m_{i + 1} }
      \prod_{i = 1}^j (n - 2 m_i)^2
     = \sum_{ 1 \leqslant \alpha_1, \ldots, \alpha_j \leqslant n \atop
      \alpha_i \leqslant \alpha_{i + 1} - 2 }
      \prod_{i = 1}^j \alpha_i (n + 1 - \alpha_i).
  \end{equation}
\end{theorem}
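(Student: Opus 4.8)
The plan is to package the two sides of \eqref{eq:exconj} into a single generating function and to recognise the resulting polynomial identity as an instance of the classical spectrum of an angular-momentum operator. Multiplying the $j$-th instance of \eqref{eq:exconj} by $t^j$ and summing over $j\ge 0$, the left-hand side turns into the elementary-symmetric generating function $\prod_{0\le m<n/2}\bigl(1+t(n-2m)^2\bigr)=\prod_{1\le k\le n,\ k\equiv n\,(2)}(1+tk^2)$, while the right-hand side becomes $\Phi_n(t):=\sum_{A}t^{|A|}\prod_{\alpha\in A}\alpha(n+1-\alpha)$, where $A$ runs over all subsets of $\{1,\dots,n\}$ containing no two consecutive integers (the constraint $\alpha_i\le\alpha_{i+1}-2$). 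Both are polynomials in $t$ of degree $\lceil n/2\rceil$, so it suffices to prove the single identity $\Phi_n(t)=\prod_{1\le k\le n,\ k\equiv n\,(2)}(1+tk^2)$ and then read off \eqref{eq:exconj} by comparing coefficients of $t^j$.

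Next I would interpret $\Phi_n$ as the weighted independence polynomial of the path $P_n$ on vertices $1,\dots,n$, with weight $w_\alpha=\alpha(n+1-\alpha)$ at vertex $\alpha$. Since $P_n$ is the line graph of the path $P_{n+1}$, independent sets of $P_n$ correspond bijectively to matchings of $P_{n+1}$, so $\Phi_n$ is a substitution into the weighted matching polynomial of $P_{n+1}$ in which the $\alpha$-th edge carries weight $w_\alpha$. For a forest the matching polynomial equals the characteristic polynomial of the weighted adjacency matrix, which yields
\[
  \Phi_n(t)=x^{-(n+1)}\det\!\bigl(xI-A_{n+1}\bigr)\Big|_{t=-1/x^2},
\]
where $A_{n+1}$ is the $(n+1)\times(n+1)$ symmetric tridiagonal matrix with zero diagonal and off-diagonal entries $\sqrt{\alpha(n+1-\alpha)}$ for $\alpha=1,\dots,n$. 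So the whole problem reduces to computing the spectrum of $A_{n+1}$.

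To do that, let $\sigma=\bigl(\begin{smallmatrix}0&1\\1&0\end{smallmatrix}\bigr)$ and let $M=\sum_{i=1}^{n}\sigma^{(i)}$ act on $(\mathbb{C}^2)^{\otimes n}$, with $\sigma^{(i)}$ applying $\sigma$ in the $i$-th tensor factor. Because $M$ commutes with the $S_n$-action permuting the factors, it preserves the symmetric subspace $\mathrm{Sym}^n(\mathbb{C}^2)$; a short computation in the standard basis of that subspace, normalised by the factors $\binom{n}{j}^{1/2}$, shows that $M$ acts there as precisely the matrix $A_{n+1}$. On the other hand, diagonalising $\sigma$ as $\mathrm{diag}(1,-1)$ makes $M$ diagonal on $(\mathbb{C}^2)^{\otimes n}$ with eigenvalue $n-2(\#\text{minus signs})$, and the $n+1$ symmetrised tensors with $m$ minus signs ($m=0,\dots,n$) give a basis of $\mathrm{Sym}^n(\mathbb{C}^2)$; hence $A_{n+1}$ has the simple eigenvalues $n-2m$, $m=0,\dots,n$. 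Therefore $\det(xI-A_{n+1})=\prod_{m=0}^{n}\bigl(x-(n-2m)\bigr)=\prod_{|\ell|\le n,\ \ell\equiv n\,(2)}(x-\ell)$. Splitting off the $\ell=0$ factor when $n$ is even and pairing $\ell$ with $-\ell$ gives $\prod_{|\ell|\le n,\ \ell\equiv n}(x-\ell)=x^{\varepsilon}\prod_{1\le k\le n,\ k\equiv n}(x^2-k^2)$ with $\varepsilon\in\{0,1\}$ of the parity of $n$; since $(n+1)-\varepsilon=2\lceil n/2\rceil$ is exactly twice the number of remaining factors, substituting $t=-1/x^2$ recovers $\Phi_n(t)=\prod_{1\le k\le n,\ k\equiv n}(1+tk^2)$, and \eqref{eq:exconj} follows. (As a sanity check, this also gives the clean recursion $\Phi_n(t)=(1+tn^2)\,\Phi_{n-2}(t)$.)

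The one genuinely delicate point is the bookkeeping in the two reductions: getting the dictionary independence polynomial $\leftrightarrow$ matching polynomial $\leftrightarrow$ characteristic polynomial and the size shift $n\mapsto n+1$ exactly right, and then verifying that $M$ restricted to $\mathrm{Sym}^n(\mathbb{C}^2)$ is \emph{literally} $A_{n+1}$ rather than merely conjugate to it (the binomial normalisation constants must cancel as claimed). Everything past that is a routine check on a finite computation. I should add that this route is quite different in spirit from the self-contained combinatorial bijection Zagier gives in the Appendix: it trades the bijection for the classical fact that these identities are a shadow of the spectrum of the operator $J_x$ in the spin-$n/2$ representation of $\mathfrak{sl}_2$.
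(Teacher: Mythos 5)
Your proof is correct, and the steps all check out: the repackaging of \eqref{eq:exconj} into a single polynomial identity in $t$, the dictionary between independent sets of the path on $n$ vertices and matchings of the path on $n+1$ vertices, the identity between the weighted matching polynomial of a path and the characteristic polynomial of the associated Jacobi matrix (for a path this is nothing more than the three-term recurrence for tridiagonal determinants), and the identification of that Jacobi matrix with the restriction of $\sum_i\sigma^{(i)}$ to $\mathrm{Sym}^n(\mathbb{C}^2)$ --- the binomial normalisations do cancel as you claim, giving off-diagonal entries $\sqrt{\alpha(n+1-\alpha)}$ on the nose, so the spectrum is $\{n-2m:\,m=0,\dots,n\}$ and the product formula for $\Phi_n(t)$ follows. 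The route is genuinely different from Zagier's proof in the Appendix, although the two share a skeleton: both reduce the theorem to the assertion that a certain monic polynomial of degree $n+1$ in $u$ (your $x^{n+1}\Phi_n(-1/x^2)$, his $\Phi_{n+1}(n+1,u)$) has as its roots precisely the integers $\lambda\equiv n\pmod 2$ with $|\lambda|\le n$, and your tridiagonal determinant recurrence is exactly his recursion \eqref{eq:Phirec} specialised at $X=n+1$. The methods for locating the roots then diverge. You diagonalise the operator by passing to the eigenbasis of $\sigma$ on the full tensor power; Zagier instead keeps the free variable $X$, derives the closed form $(1-T)^x(1+T)^y$ for the generating function of $\Phi_M(x+y+1,y-x)/M!$, and reads off the vanishing of $\Phi_M(M,M+1-2j)$ from the vanishing of a binomial coefficient. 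His argument is entirely elementary and combinatorial and yields the more general identities \eqref{eq:Phi1}--\eqref{eq:Phi3} as a by-product; yours is shorter if one grants the standard facts about $\mathrm{Sym}^n(\mathbb{C}^2)$, and it explains conceptually why the roots are the spin-projection eigenvalues $n-2m$, which is close to the operator-theoretic setting in which Djakov and Mityagin originally encountered these quantities. (One cosmetic point: your phrase ``$\varepsilon$ of the parity of $n$'' is backwards --- $\varepsilon=1$ exactly when $n$ is even --- but your identity $(n+1)-\varepsilon=2\lceil n/2\rceil$ fixes the correct value, so nothing is affected.)
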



\section{The density $p_3$}\label{sec:p3}

The elliptic integral evaluation \eqref{eq:p3ell} of $p_3$ is very suitable to
extract information about the features of $p_3$ exposed in Figure \ref{fig:p3}.
It follows, for instance, that $p_3$ has a singularity at $1$. Moreover, using
the known asymptotics for $K(x)$, we may deduce that the singularity is of the
form
\begin{equation}\label{eq:p3at1}
  p_3 (x) = \frac{3}{2\pi^2}\log\left( \frac{4}{|x-1|} \right) + O (1)
\end{equation}
as $x\to1$.

We also recall from \cite[Ex. 5]{bsw-rw2} that $p_3$ has the
expansion, valid for $0\le x \le1$,
\begin{equation}\label{eq:p3at0}
  p_3(x) = \frac{2 x}{\pi\sqrt{3}}\sum_{k=0}^\infty W_3(2k) \left(\frac x3\right)^{2k}
\end{equation}
where
\begin{equation}
  W_3(2k)= \sum_{j=0}^k  \binom{k}{j}^2 \binom{2j}{j}
\end{equation}
is the sum of squares of trinomials. Moreover, we have from \cite[Eqn.
29]{bsw-rw2} the functional relation
\begin{equation}\label{eq:p3mod}
  p_3(x)=\frac{4x}{(3-x)(x+1)}\,p_3\left(\frac{3-x}{1+x}\right)
\end{equation}
so that \eqref{eq:p3at0} determines $p_3$ completely and also makes apparent the
behaviour at $3$.

We close this section with two more alternative expressions for $p_3$.

\begin{example}[Hypergeometric form for $p_3$]
  Using the techniques in \cite{cz-apery} we can deduce from \eqref{eq:p3at0} that
  \begin{equation}\label{eq:p3hyp}
    p_3(x) =\frac{ 2\,\sqrt{3}x}{\pi  \left( 3+{x}^{2} \right)}\,
  {_2F_1\left(\frac13,\frac23;\,1;\,{\frac {{x}^{2} \left( 9-x^2 \right)^2}{ \left( 3+x^2 \right)^3}}\right)}
  \end{equation}
  which is found in a similar but simpler way than the hypergeometric form
  of $p_4$ given in Theorem \ref{thm:p4hyp04}.
  Once obtained, this identity is easily proven using the differential equation
  from Theorem \ref{thm:pnde} satisfied by $p_3$.  From \eqref{eq:p3hyp} we
  see, for example, that $p_3(\sqrt{3})^2 = \frac{3}{2\pi^2}W_3(-1)$.
\qede
\end{example}

\begin{example}[Iterative form for $p_3$]\label{ex:agmp3}
  The expression \eqref{eq:p3hyp} can be interpreted in terms of the cubic AGM,
  $\tmop{AG}_3$, see \cite{bb-cubicagm}, as follows. Recall that
  $\tmop{AG}_3(a,b)$ is the limit of iterating
  \begin{equation*}
    a_{n+1}=\frac{a_n+2b_n}{3}, \quad b_{n+1}=\sqrt[3]{b_n \left( \frac{a_n^2+a_n b_n+b_n^2}{3} \right)},
  \end{equation*}
  beginning with $a_0=a$ and $b_0=b$. The iterations converge cubically, thus allowing for very
  efficient high-precision evaluation. On the other hand,
  \begin{equation*}
    \frac{1}{\tmop{AG}_3(1,s)} = \ipFq21{\frac13,\frac23}{1}{1-s^3}
  \end{equation*}
  so that in consequence of \eqref{eq:p3hyp}, for $0\le x \le3$,
  \begin{equation}\label{eq:p3ag3}
    p_3(x) = \frac{2\sqrt 3}{\pi} \,\frac{x}{\tmop{AG}_3(3+x^2,3\left|1-x^2\right|^{2/3})}.
  \end{equation}
  Note that $p_3(3)=\frac{\sqrt{3}}{2\pi}$ is a direct consequence of the final formula.

  Finally we remark that the cubic AGM also makes an appearance in the case
  $n=4$.  We just mention that the modular properties of $p_4$ recorded in
  Remark~\ref{rk:p4modularity} can be stated in terms of the theta functions
  \begin{equation}
    b(\tau) = \frac{\eta(\tau)^3}{\eta(3\tau)}, \quad
    c(\tau) = 3\frac{\eta(3\tau)^3}{\eta(\tau)}
  \end{equation}
  where $\eta$ is the Dedekind eta function defined in \eqref{eq:eta}.
  For more information and proper definitions of the functions $b$, $c$ as well
  as $a$, which is related by $a^3 = b^3 + c^3$, we refer to \cite{garvan}.
  Ultimately we are hopeful that, in search for an analogue of \eqref{eq:p3mod}
  for $p_4$, this may lead to an algebraic relation between algebraically
  related arguments of $p_4$.
\qede
\end{example}

\section{The density $p_4$}\label{sec:p4}

The densities $p_n$ are recursively related. As in \cite{hughes-rw}, setting
$\phi_{n}(x) =  p_{n}(x)/(2\pi x)$, we have that for integers $n\ge2$
\begin{equation}\label{eq:pnrec}
  \phi_n(x) = \frac{1}{2\pi}\, \int_0^{2\pi}
    \phi_{n-1}\left(\sqrt{x^2-2x \cos \alpha + 1}\right) \id \alpha.
\end{equation}

We use this recursive relation to get some quantitative information about the
behaviour of $p_4$ at $x=4$.

\begin{theorem}\label{thm:p4at4}
  As $x \rightarrow 4^-$,
  \[ p_4(x) = \frac{\sqrt{2}}{\pi^2}\sqrt{4 - x} - \frac{3\sqrt{2}}{16\pi^2} (4-x)^{3/2}
     + \frac{23\sqrt{2}}{512\pi^2} (4-x)^{5/2} + O\left( (4-x)^{7/2} \right). \]
\end{theorem}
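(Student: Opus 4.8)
The plan is to use the recursion \eqref{eq:pnrec} with $n=4$ to reduce the behaviour of $p_4$ near its right endpoint to a one-dimensional integral over a vanishingly short interval, and then to expand that integral. Since $\phi_3$ is supported on $[0,3]$, for $3<x<4$ the integrand $\phi_3\bigl(\sqrt{x^2-2x\cos\alpha+1}\bigr)$ in \eqref{eq:pnrec} is nonzero only for $\alpha$ in a small neighbourhood of $0$ (and, by symmetry, of $2\pi$), namely where $\sqrt{x^2-2x\cos\alpha+1}\le3$. On that neighbourhood $r=\sqrt{x^2-2x\cos\alpha+1}$ increases monotonically from $x-1$ to $3$; substituting it for $\alpha$ and using $\phi_n=p_n/(2\pi x)$ gives
\[
  p_4(x)=\frac{2x}{\pi}\int_{x-1}^{3}
    \frac{p_3(r)\id r}{\sqrt{\bigl(r^2-(x-1)^2\bigr)\bigl((x+1)^2-r^2\bigr)}},
  \qquad 3<x<4 .
\]
As $x\to4^-$ the interval $[x-1,3]$ collapses to the point $3$, staying well away from the logarithmic singularity of $p_3$ at $1$.

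Next I would rescale to expose the $\sqrt{4-x}$ behaviour: put $\epsilon=4-x$ and $r=(x-1)+\epsilon u$ with $u\in[0,1]$, so that $r^2-(x-1)^2=\epsilon u\bigl(6-\epsilon(2-u)\bigr)$ and $(x+1)^2-r^2=\bigl(2-\epsilon u\bigr)\bigl(8-\epsilon(2-u)\bigr)$. This pulls out a factor $\sqrt{\epsilon}$ and an integrable weight $u^{-1/2}$:
\[
  p_4(x)=\frac{2(4-\epsilon)}{\pi}\,\sqrt{\epsilon}\int_0^1\frac{G(\epsilon,u)}{\sqrt u}\id u,
  \quad
  G(\epsilon,u)=\frac{p_3\bigl(3-\epsilon(1-u)\bigr)}
    {\sqrt{\bigl(6-\epsilon(2-u)\bigr)\bigl(2-\epsilon u\bigr)\bigl(8-\epsilon(2-u)\bigr)}} .
\]
The point that makes this work is that $p_3$ is real analytic in a neighbourhood of $x=3$: since $K(k)=\tfrac{\pi}{2}\,\ipFq21{\frac12,\frac12}{1}{k^2}$ and the parameter $k^2=\frac{(x+1)^3(3-x)}{16x}$ appearing in \eqref{eq:p3ell} is a rational function of $x$ vanishing at $3$, the representation $p_3(x)=\frac{\sqrt x}{2\pi}\,\ipFq21{\frac12,\frac12}{1}{k^2}$ is analytic near $x=3$ and no real part need be taken there (alternatively one may use \eqref{eq:p3mod} with the power series \eqref{eq:p3at0}). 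Consequently the three factors under the root stay positive and bounded away from $0$, and $G(\epsilon,u)$ is jointly real analytic on a neighbourhood of $[0,\epsilon_0]\times[0,1]$. I would then Taylor-expand $G(\epsilon,u)=G_0(u)+G_1(u)\epsilon+G_2(u)\epsilon^2+O(\epsilon^3)$ uniformly in $u\in[0,1]$ (boundedness of $\partial_\epsilon^3 G$ on a compact neighbourhood), integrate term by term against $u^{-1/2}$ — each term being an elementary integral $\int_0^1 u^{k-1/2}\id u$ — and bound the tail, which is $O(\epsilon^3)$. Multiplying by $\tfrac{2(4-\epsilon)}{\pi}\sqrt{\epsilon}$ then gives an expansion $p_4(x)=a_0\sqrt{\epsilon}+a_1\epsilon^{3/2}+a_2\epsilon^{5/2}+O(\epsilon^{7/2})$, which is the claimed shape, with error $O\bigl((4-x)^{7/2}\bigr)$.

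It remains to identify $a_0,a_1,a_2$. These depend only on the Taylor expansion of $p_3$ at $3$ to second order — $p_3(3)=\tfrac{\sqrt3}{2\pi}$ (already recorded in the paper) together with $p_3'(3)$ and $p_3''(3)$, obtained by differentiating \eqref{eq:p3ell} or \eqref{eq:p3hyp} — and on the trivial expansions of the linear factors $6-\epsilon(2-u)$, $2-\epsilon u$, $8-\epsilon(2-u)$; substituting these, performing the $u$-integrals, and collecting powers of $\epsilon=4-x$ yields the three displayed coefficients. The main effort is this last bookkeeping, in particular the $(4-x)^{5/2}$ coefficient, which mixes the second-order data of $p_3$ with the second-order expansion of the weight; the uniform expansion and the remainder estimate above are routine once the analyticity of $p_3$ near $3$ is granted. (As an independent check, the same expansion falls out of the hypergeometric form of $p_4$ in Theorem~\ref{thm:p4hyp04}, whose argument tends to $0$ like $(4-x)^3$ as $x\to4^-$, so that the $_3F_2$ there contributes only $1+O((4-x)^3)$.)
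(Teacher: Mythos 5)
Your argument is correct, and it takes a genuinely different route from the paper's, although both begin from the same recursion \eqref{eq:pnrec}. The paper keeps the $\alpha$-integral with its moving upper limit $\arccos\bigl(\tfrac{x^2-8}{2x}\bigr)$ and differentiates under the integral sign; the singularity then emerges from the boundary term $-\tfrac1\pi\tfrac{(x^2+8)\,\phi_3(3)}{x\sqrt{(16-x^2)(x^2-4)}}$, and the higher-order terms are obtained by differentiating twice more and integrating back. You instead substitute $r=\sqrt{x^2-2x\cos\alpha+1}$ to land on an explicit Abel-type convolution of $p_3$ against the two-step kernel, rescale by $r=(x-1)+\epsilon u$ to isolate the $\sqrt{\epsilon}\,u^{-1/2}$ singularity, and then Taylor-expand the remaining jointly analytic factor. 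Your computations check out: the change of variables (with the Jacobian $2r\,dr/\sqrt{(r^2-(x-1)^2)((x+1)^2-r^2)}$ and the factor $2$ from the symmetry $\alpha\leftrightarrow 2\pi-\alpha$) reproduces the stated integral, the one-sided analyticity of $p_3$ at $3$ follows as you say from $k^2=\tfrac{(x+1)^3(3-x)}{16x}$ vanishing at $x=3$ in \eqref{eq:p3ell}, and the leading term works out to $\tfrac{8}{\pi}\cdot 2\cdot\tfrac{1}{8\sqrt{2}\,\pi}=\tfrac{\sqrt2}{\pi^2}$ as required. What your approach buys is a cleaner handle on the full half-integer expansion: the uniform Taylor expansion of $G(\epsilon,u)$ against the weight $u^{-1/2}$ produces all three coefficients and the $O((4-x)^{7/2})$ remainder in one pass, whereas the paper's route requires tracking singular versus regular contributions through three differentiations and then integrating an asymptotic series (fixing the constant via $p_4(4^-)=0$). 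Like the paper, you leave the final coefficient extraction as bookkeeping, which is acceptable here since it reduces to $p_3(3)$, $p_3'(3)$, $p_3''(3)$ and elementary Beta integrals $\int_0^1 u^{k-1/2}\,\mathrm{d}u$.
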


\begin{proof}
  Set $y=\sqrt{x^2-2x \cos \alpha + 1}$. For $2 < x < 4$,
  \[ \phi_4 (x) = \frac{1}{\pi} \int_0^{\pi} \phi_3 (y)\, \md \alpha  =
     \frac{1}{\pi} \int_0^{\arccos ( \frac{x^2 - 8}{2 x})} \phi_3 (y)\, \md \alpha
      \]
  since $\phi_3$ is only supported on $[0, 3]$. Note that $\phi_3 (y)$ is
  continuous and bounded in the domain of integration. By the Leibniz integral
  rule, we can thus differentiate under the integral sign to obtain
  \begin{equation}\label{eq:phi4D1}
    \phi'_4 (x) = - \frac{1}{\pi}  \frac{(x^2 + 8)\, \phi_3 (3)}{x \sqrt{(16 -
    x^2) (x^2 - 4)}} + \frac{1}{\pi} \int_0^{\arccos ( \frac{x^2 - 8}{2 x})}
    (x - \cos (\alpha)) \,\frac{\phi_3' (y)}{y}\, \md \alpha.
  \end{equation}
  This shows that $\phi_4'$, and hence $p_4'$, have a singularity at $x =
  4$. More specifically,
  \[ \phi_4' (x) = - \frac{1}{8 \sqrt{2} \pi^3 \sqrt{4 - x}} + O (1)
     \hspace{1em} \text{as $x \rightarrow 4^-$} . \]
  Here, we used that $\phi_3 (3) = \frac{\sqrt{3}}{12 \pi^2}$.
  It follows that
  \[ p_4' (x) = - \frac{1}{\sqrt{2} \pi^2 \sqrt{4 - x}} + O (1) \]
  which, upon integration, is the claim to first order. Differentiating
  \eqref{eq:phi4D1} twice more proves the claim.
\end{proof}

\begin{remark}
  The situation for the singularity at $x = 2^+$ is more complicated since in
  (\ref{eq:phi4D1}) both the integral (via the logarithmic singularity of
  $\phi_3$ at $1$, see \eqref{eq:p3at1}) and the boundary term contribute.
  Numerically, we find, as $x\to2^+$,
  \[ p_4' (x) = - \frac{2}{\pi^2 \sqrt{x - 2}} + O (1). \]
  On the other hand, the derivative of $p_4$ at 2 from the left is given by
  \[ p_4'(2^-) = \frac{\sqrt3}{\pi} \, \pFq32{-\frac12,\frac13,\frac23}{1,1}{1}
    - \frac23\, p_4(2). \]
  These observations can be proven in hindsight from Theorem \ref{thm:p4hyp}.
\qede
\end{remark}

We now turn to the behaviour of $p_4$ at zero which we derive from
the pole structure of $W_4$ as described in Remark \ref{rk:pnat0}.

\begin{figure}[htbp]\label{fig:W45}
  \begin{center}
    \subfigure[$W_4$\label{fig:W4}]{\includegraphics[width=0.4\textwidth]{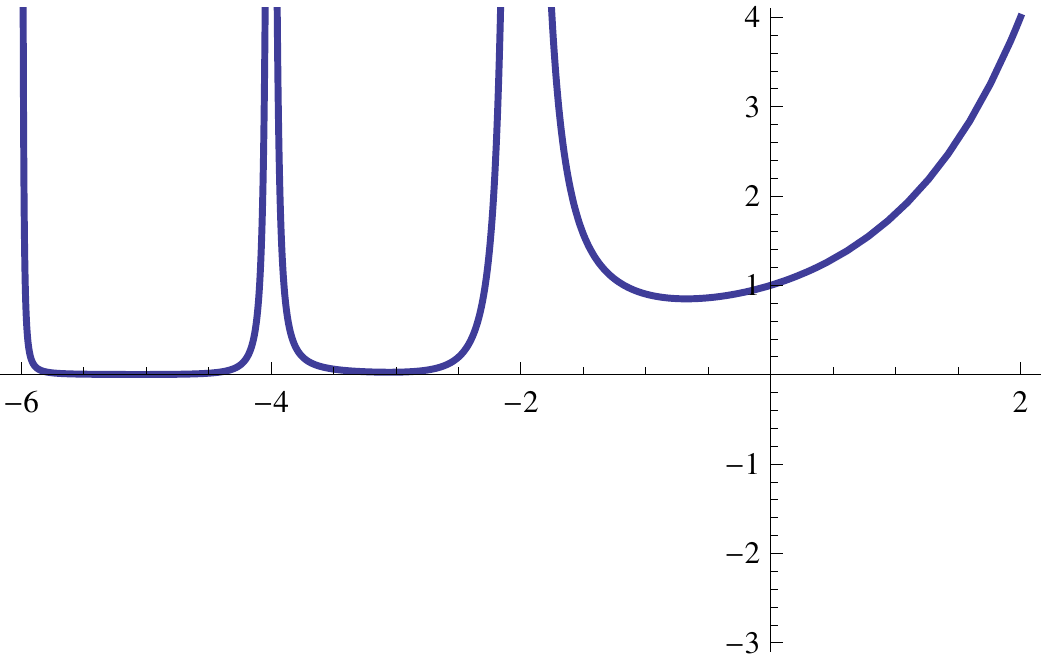}}
    \hfil
    \subfigure[$W_5$\label{fig:W5}]{\includegraphics[width=0.4\textwidth]{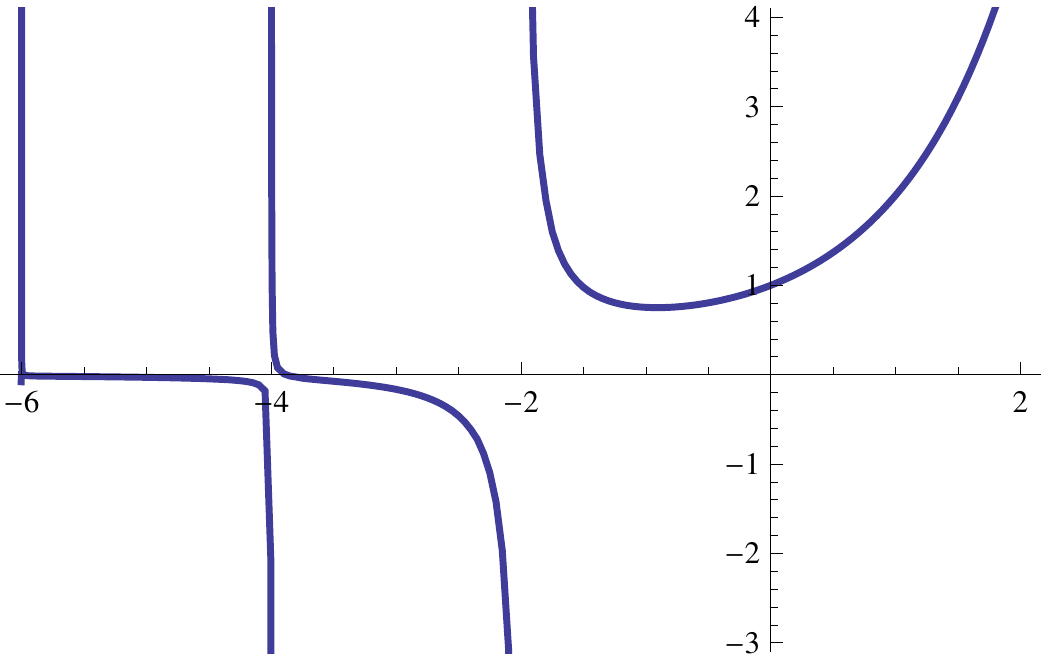}}
    \caption{$W_4$ and $W_5$ analytically continued to the real line.}
  \end{center}
\end{figure}

\begin{example}\label{eg:W4p4mellinat0}
  From \cite{bsw-rw2}, we know that $W_4$ has a pole of order $2$ at $- 2$ as
  illustrated in Figure \ref{fig:W4}. More specifically, results in Section \ref{sec:Wndiff} give
  \[ W_4 (s) = \frac{3}{2 \pi^2}  \frac{1}{(s + 2)^2} + \frac{9}{2 \pi^2} \log
     (2) \frac{1}{s + 2} + O (1) \]
  as $s \rightarrow - 2$. It therefore follows that
  \[ p_4 (x) = - \frac{3}{2 \pi^2} x \log (x) + \frac{9}{2 \pi^2} \log (2) x + O(x^3) \]
  as $x \rightarrow 0$.
\qede
\end{example}

More generally, $W_4$ has poles of order $2$ at $- 2 k$ for $k$ a positive
integer. Define $s_{4,k}$ and $r_{4,k}$ by
\begin{equation}\label{eq:rs4def}
  W_4 (s) = \frac{s_{4, k-1}}{(s + 2 k)^2} + \frac{r_{4, k-1}}{s + 2 k} + O (1)
\end{equation}
as $s \rightarrow - 2 k$. We thus obtain that, as $x \rightarrow 0^+$,
\begin{equation*}
  p_4 (x) = \sum_{k = 0}^{K-1} x^{2 k + 1} \left( r_{4, k} - s_{4, k} \log (x)
   \right) + O (x^{2 K + 1}).
\end{equation*}
In fact, knowing that $p_4$ solves the linear Fuchsian differential equation
\eqref{eq:p4de} with a regular singularity at $0$ we may conclude:

\begin{theorem}\label{thm:p4at0}
  For small values $x>0$,
  \begin{equation}\label{eq:p4at0}
    p_4 (x) = \sum_{k = 0}^\infty \left( r_{4, k} - s_{4, k} \log (x)
     \right)\,x^{2 k + 1}.
  \end{equation}
\end{theorem}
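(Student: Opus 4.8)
The plan is to promote the asymptotic expansion displayed just above the statement to a genuinely convergent one, exploiting that $p_4$ solves the Fuchsian equation $A_4\cdot p_4=0$ of Example \ref{eg:p4de}, whose only finite singular points, visible from \eqref{eq:p4deD}, are $0$, $\pm 2$ and $\pm 4$. First I would examine the regular singular point $x=0$. In the $\theta$-form \eqref{eq:p4de} the operator is $A_4=64(\theta-1)^3-4x^2\theta(5\theta^2+3)+x^4(\theta+1)^3$, so its indicial polynomial is $64(\theta-1)^3$, with $\theta=1$ as a root of multiplicity three. Since $A_4$ raises the power of $x$ only by $0$, $2$ or $4$, the Frobenius recursion never couples even and odd powers, so near $0$ the solution space is spanned by three functions of the form
\begin{align*}
  y_1(x)&=x\,\phi_0(x^2),\qquad \phi_0(0)=1,\\
  y_2(x)&=y_1(x)\log x+x\,\phi_1(x^2),\\
  y_3(x)&=c\,x(\log x)^2+x\,\phi_2(x^2)\log x+x\,\phi_3(x^2),\qquad c\neq 0,
\end{align*}
where $\phi_0,\ldots,\phi_3$ are power series; by the convergence theorem for solutions at a regular singular point these series all converge for $|x|<2$, the distance from $0$ to the nearest other singularity of $A_4$.

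Because $p_4$ is a classical solution of $A_4\cdot y=0$ on $(0,2)$ (as established in the proof of Theorem \ref{thm:pnde}), and the restrictions of $y_1,y_2,y_3$ to that interval are linearly independent, we may write $p_4=\alpha y_1+\beta y_2+\gamma y_3$ there for real constants $\alpha,\beta,\gamma$. At this point the Mellin input enters: by Remark \ref{rk:pnat0}, the fact that $W_4$ has poles of order at most two forces the expansion $p_4(x)=\sum_{k=0}^{K-1}x^{2k+1}\bigl(r_{4,k}-s_{4,k}\log x\bigr)+O(x^{2K+1})$ for every $K$, so in particular $p_4$ has no $x(\log x)^2$ term and hence $\gamma=0$. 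Consequently, for $0<x<2$,
\[
  p_4(x)=\beta\,x\,\phi_0(x^2)\log x+x\bigl(\alpha\phi_0(x^2)+\beta\phi_1(x^2)\bigr),
\]
and defining $-\sum_{k\ge0}s_k x^{2k}:=\beta\phi_0(x^2)$ and $\sum_{k\ge0}r_k x^{2k}:=\alpha\phi_0(x^2)+\beta\phi_1(x^2)$, both convergent for $|x|<2$, yields the convergent representation $p_4(x)=\sum_{k\ge0}(r_k-s_k\log x)\,x^{2k+1}$.

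It remains to identify the coefficients. Subtracting this convergent series from the order-$K$ asymptotic expansion above gives $\sum_{k=0}^{K-1}\bigl[(r_k-r_{4,k})-(s_k-s_{4,k})\log x\bigr]x^{2k+1}=O(x^{2K+1})$ for all $K$; since a finite sum $\sum_{k=0}^{N}(\mu_k-\nu_k\log x)x^{2k+1}$ which is $O(x^{2N+1})$ as $x\to 0^+$ must have all $\mu_k=\nu_k=0$, an induction on $K$ forces $r_k=r_{4,k}$ and $s_k=s_{4,k}$ for every $k$, which is \eqref{eq:p4at0}. I expect the main obstacles to be purely organizational: verifying that the exponent at $0$ is the triple value $1$ with the claimed even/odd separation, so that the three local solutions have exactly the shape above, and invoking the Mellin dictionary cleanly enough to discard the $x(\log x)^2$ solution. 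Everything after $\gamma=0$, including the final coefficient comparison, is routine.
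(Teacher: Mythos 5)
Your proposal is correct and follows essentially the same route as the paper, which proves the theorem by exactly this combination: the all-orders asymptotic expansion coming from the double poles of $W_4(s)$, upgraded to a convergent series via the Frobenius analysis at the regular singular point $0$ (triple indicial root $1$, hence an analytic, a logarithmic, and a double-logarithmic solution, with the last excluded by the absence of $(\log x)^2$ terms). You have simply written out in full the details that the paper compresses into the sentence preceding the theorem and the Frobenius discussion following it.
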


Note that
\[ s_{4, k} = \frac{3}{2 \pi^2}  \frac{W_4 (2 k)}{8^{2 k}} \]
as the two sequences satisfy the same recurrence and initial conditions.  The
numbers $W_4(2k)$ are also known as the Domb numbers (\cite{bbbg-bessel}), and
their generating function in hypergeometric form is given in \cite{rogers-5f4}
and has been further studied in \cite{cz-apery}. We thus have
\begin{equation}\label{eq:p4de-ana}
  \sum_{k=0}^\infty s_{4,k} \,x^{2k+1} = \frac{6x}{\pi^2 \left( 4-x^2 \right)}\,
    \pFq32{\frac13,\frac12,\frac23}{1,1}{\frac {108x^2}{ \left( x^2-4 \right)^3}}
\end{equation}
which is readily verified to be an analytic solution to the differential
equation satisfied by $p_4$.

\begin{remark}
  For future use, we note that \eqref{eq:p4de-ana} can also be written as
  \begin{equation}\label{eq:p4de-ana2}
    \sum_{k=0}^\infty s_{4,k} \,x^{2k+1} = \frac{24x}{\pi^2 \left( 16-x^2 \right)}\,
      \pFq32{\frac13,\frac12,\frac23}{1,1}{\frac {108x^4}{ \left( 16-x^2 \right)^3}}
  \end{equation}
  which follows from the transformation
  \begin{equation}\label{eq:hyptr32}
    (1-4x)  \pFq32{\frac13,\frac12,\frac23}{1,1}{-\frac{108x}{(1-16x)^3}} =
    (1-16x) \pFq32{\frac13,\frac12,\frac23}{1,1}{\frac{108x^2}{(1-4x)^3}}
  \end{equation}
  given in \cite[(3.1)]{cz-apery}.
  \qede
\end{remark}

On the other hand, as a consequence of \eqref{eq:rs4def} and the functional
equation \eqref{eq:W4fe} satisfied by $W_4$, the residues $r_{4,k}$ can be
obtained from the recurrence relation
\begin{align}
  128 k^3 r_{4,k} &= 4(2k-1)(5k^2-5k+2) r_{4,k-1}
  -2(k-1)^3 r_{4,k-2} \nonumber\\
  &\quad+3\left( 64k^2 s_{4,k} - (20k^2-20k+6) s_{4,k-1} + (k-1)^2 s_{4,k-2} \right)
\end{align}
with $r_{4,-1}=0$ and $r_{4,0}=\frac{9}{2\pi^2}\log(2)$.

\begin{remark}
  In fact, before realizing the connection between the Mellin transform and the
  behaviour of $p_4$ at $0$, we empirically found that $p_4$ on $(0,2)$ should
  be of the form $r(x)-s(x)\log(x)$ where $a$ and $r$ are odd and analytic. We
  then numerically determined the coefficients and observed the relation with
  the residues of $W_4$ as given in Theorem \ref{thm:p4at0}.
  \qede
\end{remark}

The differential equation for $p_4$ has a regular singularity at $0$. A basis
of solutions at $0$ can therefore be obtained via the Frobenius method, see for
instance \cite{ince-ode}. Since the indicial equation has $1$ as a triple root, the
solution \eqref{eq:p4de-ana} is the unique analytic solution at $0$ while the
other solutions have a logarithmic or double logarithmic singularity. The
solution with a logarithmic singularity at $0$ is explicitly given in
\eqref{eq:p4de-logsol}, and, from \eqref{eq:p4at0}, it is clear that $p_4$ on
$(0,2)$ is a linear combination of the analytic and the logarithmic solution.

Moreover, the differential equation for $p_4$ is a symmetric square. In other
words, it can be reduced to a second order differential equation, which after a
quadratic substitution, has 4 regular singularities and is thus of Heun type. In
fact, a hypergeometric representation of $p_4$ with rational argument is possible.

\begin{theorem}\label{thm:p4hyp}
  For $2<x<4$,
  \begin{equation}\label{eq:p4hyp}
    p_4(x) = \frac2{\pi^2}\,\frac{\sqrt {16-x^2}}x\,
      \pFq32{\frac12, \frac12, \frac12}{\frac56,\frac76}
        {\frac {\left( 16-x^2 \right)^3}{108 x^4}}.
  \end{equation}
\end{theorem}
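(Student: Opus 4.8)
The plan is to verify \eqref{eq:p4hyp} by showing both sides satisfy the same third-order linear ODE on $(2,4)$ and then matching enough boundary data to pin down the solution uniquely. Concretely, I would proceed as follows.

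\textbf{Step 1: the ODE for the candidate.} Let $z = \frac{(16-x^2)^3}{108 x^4}$ and set $g(x) = \frac{\sqrt{16-x^2}}{x}\,{}_3F_2\!\bigl(\tfrac12,\tfrac12,\tfrac12;\tfrac56,\tfrac76;z\bigr)$. The ${}_3F_2$ with those parameters satisfies the hypergeometric ODE $\bigl[\vartheta^2(\vartheta-\tfrac13)(\vartheta+\tfrac16) - z(\vartheta+\tfrac12)^3\bigr]F=0$ with $\vartheta = z\,D_z$ (after clearing the shift the indicial exponents at $z=0$ are $0,\tfrac16,\tfrac13$, matching the lower parameters $\tfrac56,\tfrac76$ shifted; one checks the parameters carefully). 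Pulling this back through the algebraic change of variables $z = z(x)$ and multiplying by the algebraic prefactor $\tfrac{\sqrt{16-x^2}}{x}$ is a purely mechanical (if lengthy) computation; it produces a third-order linear ODE $\tilde A_4 \cdot g = 0$ with rational function coefficients in $x$. I would then check that $\tilde A_4$ agrees, up to an overall rational factor, with the operator $A_4$ of \eqref{eq:p4de}--\eqref{eq:p4deD}; equivalently, that $g$ solves $A_4 \cdot y = 0$. This is the step I expect to be the main obstacle in practice — not conceptually hard, but the symmetric-square structure mentioned in the text is what makes it work, and getting the pullback exactly right (including checking that no spurious singularities are introduced and that the apparent singularities of the hypergeometric pullback are genuinely apparent) requires care. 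A clean way to organize it is to use the fact, stated in the excerpt, that $A_4$ is a symmetric square of a second-order operator $L_2$; then it suffices to exhibit a solution $h$ of $L_2$ such that $h^2$ (times the appropriate normalization) equals $g$, reducing a third-order verification to a second-order one.

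\textbf{Step 2: $p_4$ solves the same ODE.} By Theorem~\ref{thm:pnde} (and Example~\ref{eg:p4de}), $p_4$ satisfies $A_4 \cdot y = 0$ in the classical sense on the interval $(2,4)$, since the leading coefficient $x^3(x-4)(x-2)(x+2)(x+4)$ from \eqref{eq:p4deD} is nonvanishing there. So on $(2,4)$ both $p_4$ and $g$ lie in the three-dimensional solution space of $A_4 \cdot y = 0$.

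\textbf{Step 3: matching the solution.} It remains to show $p_4 = \tfrac{2}{\pi^2} g$ on $(2,4)$. The cleanest route is to use the behaviour at the endpoint $x = 4^-$: the hypergeometric argument $z \to 0$ as $x \to 4^-$ (since $16 - x^2 \to 0$), so $g(x) \sim \frac{\sqrt{16-x^2}}{4} = \frac{\sqrt{(4-x)(4+x)}}{4} \sim \frac{\sqrt{8}}{4}\sqrt{4-x} = \frac{\sqrt{2}}{2}\sqrt{4-x}$, whence $\tfrac{2}{\pi^2} g(x) \sim \tfrac{\sqrt 2}{\pi^2}\sqrt{4-x}$, which matches the leading term of $p_4$ from Theorem~\ref{thm:p4at4}. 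Because the indicial exponents of $A_4$ at the regular singular point $x=4$ are (as one reads off from \eqref{eq:p4deD}) such that one exponent is $\tfrac12$ and the other two are $0$ and $1$ (integer), the solution with a $\sqrt{4-x}$ leading singularity is one-dimensional; matching that single leading coefficient therefore forces $p_4 - \tfrac{2}{\pi^2}g$ to be an \emph{analytic} solution at $x=4$. To kill this analytic part I would match one further datum — either the next term of the expansion at $4$ (the $(4-x)^{3/2}$ coefficient, available from Theorem~\ref{thm:p4at4}, against the corresponding term obtained by expanding $g$, noting that both the analytic exponents $0,1$ at $4$ would have to be excited for $p_4-\tfrac2{\pi^2}g$ to be nonzero, and the $\sqrt{4-x}$-series coefficients are already forced to agree) — or, more robustly, compare values at an interior point such as $x = \sqrt{8}$ where $z = \frac{8^3}{108\cdot 64} = \frac{512}{6912} = \tfrac{1}{...}$ gives a tractable argument, using the known value/derivative data for $p_4$ there. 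Either way, once the full local solution at $x=4$ is matched we conclude $p_4 \equiv \tfrac{2}{\pi^2}g$ on $(2,4)$ by uniqueness for the ODE, which is \eqref{eq:p4hyp}.

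Finally, I would remark that the same argument, carried out at $x=0$ instead, reproves the companion expansion \eqref{eq:p4at0}, and that analytic continuation of the resulting identity (the ${}_3F_2$ argument exceeds $1$ for $x$ near $2$) is exactly the subtlety flagged after the statement of the theorem; the ODE-based derivation sidesteps it because the ODE is valid regardless.
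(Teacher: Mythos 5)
Your proposal follows the paper's proof exactly: verify that the right-hand side satisfies $A_4\cdot y=0$ (the paper calls this step ``routine''), note that $p_4$ satisfies the same equation classically on $(2,4)$ by Theorem \ref{thm:pnde}, and match the boundary data at $x=4^-$ supplied by Theorem \ref{thm:p4at4}, your extra detail on the indicial exponents at $x=4$ correctly fleshing out why that matching pins down the solution. One small slip in a parenthetical: the hypergeometric operator in Step 1 should be the third-order $\vartheta(\vartheta-\tfrac16)(\vartheta+\tfrac16)-z(\vartheta+\tfrac12)^3$, with local exponents $0,\pm\tfrac16$ at $z=0$.
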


\begin{proof}
  Denote the right-hand side of \eqref{eq:p4hyp} by $q_4(x)$ and observe that
  the hypergeometric series converges for $2<x<4$. It is routine to verify that
  $q_4$ is a solution of the differential equation $A_4 \cdot y(x) = 0$ given
  in \eqref{eq:p4de} which is also satisfied by $p_4$ as proven in Theorem
  \ref{thm:pnde}. Together with the boundary conditions supplied by Theorem
  \ref{thm:p4at4} it follows that $p_4=q_4$.
\end{proof}

We note that Theorem \ref{thm:p4hyp} gives $2\sqrt{16-x^2}/(\pi^2 x)$ as an approximation to $p_4(x)$ near $x=4$,  which is much more accurate than the elementary estimates established in Theorem \ref{thm:p4at4}.

\begin{corollary}
  In particular,
  \begin{equation}
    p_4(2)=\frac {2^{7/3}\pi}{3\sqrt{3}}\,\Gamma\left (\frac 23 \right)^{-6} = \frac{\sqrt 3}{\pi}\,W_3(-1).
  \end{equation}
\end{corollary}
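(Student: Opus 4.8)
The plan is to recover $p_4(2)$ from Theorem~\ref{thm:p4hyp} as a one-sided limit and then to evaluate the resulting hypergeometric value at unit argument by a classical summation theorem. First I would note that $p_4$ is continuous (by the regularity statement preceding Theorem~\ref{thm:pnde} it is $0$ times continuously differentiable, being also a probability density), so that $p_4(2)=\lim_{x\to 2^+}p_4(x)$. In \eqref{eq:p4hyp} the prefactor $\frac{2}{\pi^2}\frac{\sqrt{16-x^2}}{x}$ tends to $\frac{2\sqrt3}{\pi^2}$ as $x\to 2^+$, while the argument $\frac{(16-x^2)^3}{108x^4}$ increases to $1$. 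Since the parametric excess $\frac56+\frac76-\frac32=\frac12$ is positive, the series $\pFq32{\frac12,\frac12,\frac12}{\frac56,\frac76}{t}$ converges at $t=1$ (its terms being nonnegative), so Abel's limit theorem gives
\[
  p_4(2)=\frac{2\sqrt3}{\pi^2}\,\pFq32{\frac12,\frac12,\frac12}{\frac56,\frac76}{1}.
\]

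Next I would evaluate this ${}_3F_2$ at $1$ in closed form. With $a=c=\frac12$ the lower parameters are $e=\frac56$ and $1+2c-e=\frac76$, so the series is of the form ${}_3F_2(a,1-a,c;e,1+2c-e;1)$, to which Whipple's summation theorem applies directly; after the elementary reductions $\Gamma\!\big(\tfrac{a+e}{2}\big)=\Gamma\!\big(\tfrac{1+e-a}{2}\big)=\Gamma(\tfrac23)$ and the two remaining gamma factors $=\Gamma(\tfrac56)$, this gives
\[
  \pFq32{\frac12,\frac12,\frac12}{\frac56,\frac76}{1}=\frac{\pi\,\Gamma(\tfrac76)}{\Gamma(\tfrac23)^2\,\Gamma(\tfrac56)}.
\]
An equivalent route is to apply a Thomae three-term transformation first, which rewrites the left-hand side as $\Gamma(\tfrac56)\Gamma(\tfrac76)\,\pFq32{\frac13,\frac12,\frac23}{1,1}{1}$ --- precisely the ${}_3F_2$ occurring in \eqref{eq:p4de-ana2} at $x=2$ --- and this series at $1$ is Watson-summable, giving $\pFq32{\frac13,\frac12,\frac23}{1,1}{1}=\frac{\pi}{\Gamma(2/3)^2\,\Gamma(5/6)^2}$.

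It then remains only to simplify the gamma quotient. Using $\Gamma(\tfrac76)=\tfrac16\Gamma(\tfrac16)$ together with the reflection formula $\Gamma(\tfrac16)\Gamma(\tfrac56)=2\pi$ turns the expression into $\frac{\pi^2}{3\,\Gamma(2/3)^2\,\Gamma(5/6)^2}$, and the Legendre duplication formula $\Gamma(\tfrac13)\Gamma(\tfrac56)=2^{1/3}\sqrt\pi\,\Gamma(\tfrac23)$ combined with the reflection formula $\Gamma(\tfrac13)\Gamma(\tfrac23)=\tfrac{2\pi}{\sqrt3}$ gives $\Gamma(\tfrac56)^2=\frac{3\,\Gamma(2/3)^4}{2^{4/3}\pi}$. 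Substituting back,
\[
  p_4(2)=\frac{2\sqrt3}{\pi^2}\cdot\frac{2^{4/3}\pi^3}{9\,\Gamma(\tfrac23)^6}=\frac{2^{7/3}\pi}{3\sqrt3}\,\Gamma\!\Big(\tfrac23\Big)^{-6}.
\]
Finally, the second displayed equality of the corollary is nothing but the previously known evaluation $W_3(-1)=\frac{2^{7/3}\pi^2}{9}\Gamma(\tfrac23)^{-6}$ recorded in \cite{bnsw-rw,bsw-rw2}. I expect the one genuine step to be the recognition that the boundary series $\pFq32{\frac12,\frac12,\frac12}{\frac56,\frac76}{1}$ is summable --- via Whipple's theorem, or via Watson's theorem after a Thomae transformation --- everything else being routine manipulation with the reflection and duplication formulas together with the continuity of $p_4$ at $2$.
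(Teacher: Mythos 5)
Your proposal is correct and follows the route the paper implicitly intends: the corollary is just Theorem~\ref{thm:p4hyp} at $x=2$, where the argument $\frac{(16-x^2)^3}{108x^4}$ equals $1$, and you correctly supply the details the paper omits (continuity of $p_4$ plus convergence of the ${}_3F_2$ at unit argument since the parametric excess is $\tfrac12>0$, then Whipple's or Watson's summation and the standard reflection/duplication reductions, with the final identification against the known value $W_3(-1)=\tfrac{2^{7/3}\pi^2}{9}\Gamma(\tfrac23)^{-6}$). All of your gamma manipulations check out, so nothing further is needed.
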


Quite marvelously, as first discovered numerically:

\begin{theorem}\label{thm:p4hyp04}
  For $0<x<4$,
  \begin{equation}\label{eq:p4hyp04}
    p_4(x) = \frac2{\pi^2}\,\frac{\sqrt {16-x^2}}x\,
      \Re \pFq32{\frac12, \frac12, \frac12}{\frac56,\frac76}
        {\frac {\left( 16-x^2 \right)^3}{108 x^4}}.
  \end{equation}
\end{theorem}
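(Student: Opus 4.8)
Let me assemble a plan.

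\medskip

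The plan is to obtain \eqref{eq:p4hyp04} from Theorem~\ref{thm:p4hyp} by analytic continuation of the hypergeometric function, followed by passage to the real part. For $2<x<4$ this is already \eqref{eq:p4hyp}, so fix $0<x<2$ and put $w(x):=(16-x^2)^3/(108x^4)$; as $x$ decreases through $(0,2)$ the rational function $w$ decreases bijectively from $+\infty$ to $1$ (while on $(2,4)$ it takes values in $(0,1)$, where the series converges). Write $F(z):={}_3F_2(\tfrac12,\tfrac12,\tfrac12;\tfrac56,\tfrac76;z)$; it extends analytically to $\mathbb{C}\setminus[1,\infty)$ and, being real on $(-\infty,1)$, satisfies $\overline{F(\bar z)}=F(z)$, so on the cut its two boundary values $F(w\pm i0)$ are complex conjugate. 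Hence $\Re{F(w)}$, the quantity appearing in \eqref{eq:p4hyp04}, is unambiguous and equals $\tfrac12\bigl(F(w+i0)+F(w-i0)\bigr)$.

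The first step is to check that
\[
q_4(x):=\frac{2}{\pi^2}\,\frac{\sqrt{16-x^2}}{x}\,\Re{F(w(x))}
\]
solves $A_4\cdot y=0$ on $(0,2)$. For $w\in(1,\infty)$ the two boundary values $F(w\pm i0)$ are solutions of the third-order hypergeometric equation for $F$, whose coefficients are analytic there; so is their half-sum. Inserting it into the change of variable $z=w(x)$ (a local diffeomorphism, $w'\neq0$) together with the algebraic prefactor reproduces exactly the operator computation performed in the proof of Theorem~\ref{thm:p4hyp}, an identity valid wherever the functions are defined by analytic continuation; and since the coefficients of $A_4$ in \eqref{eq:p4de} are real polynomials, taking real parts preserves the property of being a solution.

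Now $q_4$ and $p_4$ are both solutions of the order-$3$ Fuchsian equation $A_4\cdot y=0$ on $(0,2)$, whose indicial equation at the regular singular point $x=0$ has $1$ as a triple root. A Frobenius basis at $0$ is therefore $\{y_{\mathrm{an}},y_{\log},y_{\log^2}\}$, where $y_{\mathrm{an}}=\sum_k s_{4,k}x^{2k+1}$ is the analytic solution \eqref{eq:p4de-ana2}, $y_{\log}$ is the logarithmic solution \eqref{eq:p4de-logsol} whose $\log x$-part equals $y_{\mathrm{an}}$, and $y_{\log^2}$ carries a genuine $(\log x)^2$-term. By Theorem~\ref{thm:p4at0}, $p_4$ is a linear combination of $y_{\mathrm{an}}$ and $y_{\log}$ with $\log x$-coefficient $-\sum_k s_{4,k}x^{2k+1}=-y_{\mathrm{an}}$. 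It therefore suffices to establish: (i) $q_4$ has no $y_{\log^2}$-component, whence its $\log x$-coefficient is a multiple of $y_{\mathrm{an}}$; and (ii) that multiple is $-1$ --- by (i) only the coefficient of the single term $x\log x$ has to be matched. Granting (i) and (ii), $q_4-p_4$ is analytic at $0$, hence a constant multiple of $y_{\mathrm{an}}$; and that constant is $0$ because $\Re{F(w)}\to F(1)$ as $w\to1^+$ (the series for $F$ converges at $z=1$, since $\tfrac56+\tfrac76-\tfrac32=\tfrac12>0$), so $q_4(2^-)=\tfrac{2\sqrt3}{\pi^2}F(1)=\lim_{x\to2^+}p_4(x)=p_4(2)$ by Theorem~\ref{thm:p4hyp} and the continuity of $p_4$, while the positive series $y_{\mathrm{an}}$ does not vanish on $[0,2]$.

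The main obstacle is (i) and (ii): the behaviour of the continued $F$ as $z\to+\infty$ along the cut. I would use the Mellin--Barnes representation of $F$ and move the contour to the left past the triple poles of $\Gamma(\tfrac12+s)^3$ at $s=-\tfrac12-k$. Each such pole contributes a residue of the form $(-z)^{-1/2-k}\bigl[A_k(\log(-z))^2+B_k\log(-z)+C_k\bigr]$ with $A_k,B_k,C_k$ real; since the factor $(-z)^{-1/2-k}$ is purely imaginary on either side of the cut, while $\log(-z)=\log z\mp i\pi$, taking the real part of the half-sum annihilates every $(\log z)^2$-term --- this is (i) --- and leaves for the $\log z$-coefficient a convergent series in $z^{-1/2-k}$ with real coefficients. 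Substituting $z=w(x)$, using $\log w(x)=-4\log x+(\text{even analytic})$ and $w(x)^{-1/2}\sim\sqrt{108}\,x^2/64$, and multiplying by the prefactor, the $k=0$ term alone (with $\Gamma(\tfrac56)\Gamma(\tfrac76)/\Gamma(\tfrac12)^3=1/(3\sqrt\pi)$ and, after the standard reflection-formula simplifications, a residual factor $\sqrt3/(2\sqrt\pi)$) yields $-\tfrac3{2\pi^2}$ for the coefficient of $x\log x$ in $q_4$, matching $-s_{4,0}$ in \eqref{eq:p4at0}; this is (ii). An appealing alternative would exploit that $A_4$, and with it the hypergeometric equation for $F$, is a symmetric square, so that (i) becomes the vanishing of a single connection coefficient for the underlying second-order equation; or one could try to read the continuation across $x=2$ and the passage to the real part off the modular parametrization of $p_4$ in Remark~\ref{rk:p4modularity}. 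But the Mellin--Barnes route is the most direct.
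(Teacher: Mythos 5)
Your proposal is correct and follows the same overall strategy as the paper --- analytically continue the ${}_3F_2$ of Theorem \ref{thm:p4hyp} across the cut $[1,\infty)$, observe that the real part still solves $A_4\cdot y=0$ on $(0,2)$, and pin the solution down by its behaviour at the regular singular point $x=0$ --- but it distributes the work quite differently. The paper makes the continuation completely explicit: it applies the connection formula \cite[5.3]{luke-sf} to the perturbed parameters $\tfrac12+\varepsilon,\tfrac12,\tfrac12-\varepsilon$, lets $\varepsilon\to0$, and obtains the closed form \eqref{eq:p4de-logsol} for the real part on the cut as $\frac{\log(108z)}{2\sqrt{3z}}\,{}_3F_2(\tfrac13,\tfrac12,\tfrac23;1,1;\tfrac1z)$ plus an explicit harmonic-number series, which it then verifies to be a solution and matches against \eqref{eq:p4at0}. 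You keep the continuation implicit: the abstract facts that the half-sum of the two boundary values solves the real-coefficient equation and that the Frobenius basis at the triple indicial root $1$ splits into analytic, $\log$, and $\log^2$ solutions reduce everything to two scalar checks --- absence of a $\log^2$ component and the value $-\tfrac{3}{2\pi^2}$ of the $x\log x$ coefficient --- which you extract from the Mellin--Barnes residues at the triple poles $s=-\tfrac12-k$ (the purely imaginary factor $(-z)^{-1/2-k}$ annihilating the $(\log z)^2$ terms is precisely the mechanism that produces \eqref{eq:p4de-logsol}); you then fix the residual multiple of the analytic solution by continuity at $x=2$ via Theorem \ref{thm:p4hyp} rather than by the coefficient $r_{4,0}$ at $x=0$. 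Your route buys a proof that never needs the full formula \eqref{eq:p4de-logsol}; the paper's route buys that formula itself, which is of independent interest. The only steps you assert rather than carry out are the triple-pole residue computation giving $-\tfrac{3}{2\pi^2}$ (it checks out against \eqref{eq:p4de-logsol} combined with \eqref{eq:p4de-ana2}) and the limit $F(w\pm i0)\to F(1)$ as $w\to1^+$ (which holds since the exponent $\sum b_j-\sum a_j=\tfrac12$ at $z=1$ is positive); neither is a gap, though one trivial slip should be fixed: $w$ is a decreasing bijection of $(0,2)$ onto $(1,\infty)$, so $w$ \emph{increases} to $+\infty$ as $x$ decreases to $0$.
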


\begin{proof}
  To obtain the analytic continuation of the ${}_3F_2$ for $0<x<2$ we employ
  the formula \cite[5.3]{luke-sf}, valid for all $z$,
  \begin{align*}
    \pFq{q+1}{q}{a_1,\ldots,a_{q+1}}{b_1,\ldots,b_q}{z} &= \frac{\prod_j\Gamma(b_j)}{\prod_j\Gamma(a_j)}
    \sum_{k=1}^{q+1} \frac{\Gamma(a_k)\prod_{j\ne k}\Gamma(a_j-a_k)}{\prod_j\Gamma(b_j-a_k)} (-z)^{-a_k} \\
      &\quad \times\pFq{q+1}{q}{a_k,\{a_k-b_j+1\}_j}{\{a_k-a_j+1\}_{j\ne k}}{\frac1z},
  \end{align*}
  which requires the $a_j$ to not differ by integers. Therefore we apply it to
  \begin{equation*}
    \pFq32{\frac12+\varepsilon,\frac12,\frac12-\varepsilon}{\frac56,\frac76}{z}.
  \end{equation*}
  and take the limit as $\varepsilon\to0$. This ultimately produces, for $z>1$,
  \begin{align}\label{eq:p4de-logsol}
    \Re \pFq32{\frac12, \frac12, \frac12}{\frac56, \frac76}{z}
      &= \frac{\log(108z)}{2\sqrt{3z}}\, \pFq32{\frac13, \frac12, \frac23}{1, 1}{\frac{1}{z}}\\
      &\quad+ \frac{1}{2\sqrt{3z}}\,\sum_{n=0}^\infty \frac{(\tfrac13)_n (\tfrac12)_n
      (\tfrac23)_n}{ n!^3} \left( \frac1z \right)^n  (5H_n-2H_{2n}-3H_{3n}).\nonumber
  \end{align}
  Here $H_n = \sum_{k=1}^n 1/k$ is the $n$-th harmonic number.  Now, insert the
  appropriate argument for $z$ and the factors so the left-hand side
  corresponds to the claimed closed form.  Observing that
  \begin{equation*}
    \left(\tfrac13\right)_n \left(\tfrac12\right)_n \left(\tfrac23\right)_n = \frac{(2n)!(3n)!}{108^n (n!)^2},
  \end{equation*}
  we thus find that the right-hand side of \eqref{eq:p4hyp04} is given by
  $-\log(x) S_4(x)$ plus
  \begin{align*}
    \frac{6}{\pi^2} \sum_{n=0}^\infty \frac{(2n)!(3n)!}{(n!)^5}
      \frac{x^{4n+1}}{(16-x^2)^{3n}} \left( 5H_n-2H_{2n}-3H_{3n} +3\log(16-x^2) \right) \nonumber
  \end{align*}
  where $S_4$ is the solution (analytic at $0$) to the differential equation
  for $p_4$ given in \eqref{eq:p4de-ana2}.  This combination can now be verified
  to be a formal and hence actual solution of the differential equation for
  $p_4$.  Together with the boundary conditions supplied by Theorem
  \ref{thm:p4at0} this proves the claim.
\end{proof}

\begin{remark}\label{rk:p4hypform}
  Let us indicate how the hypergeometric expression for $p_4$ given in
  Theorem~\ref{thm:p4hyp} was discovered. Consider the generating series
  \begin{equation}\label{eq:y0def}
    y_0(z) = \sum_{k=0}^\infty W_4(2k) z^k
  \end{equation}
  of the Domb numbers which is just a rescaled version of \eqref{eq:p4de-ana}.
  Corresponding to \eqref{eq:p4de-ana2}, the hypergeometric form for this
  series given in \cite{rogers-5f4} is
  \begin{equation}\label{eq:y0hyp}
    y_0(z) = \frac{1}{1-4z}\,\pFq32{\frac13,\frac12,\frac23}{1,1}{\frac{108z^2}{(1-4z)^3}}
  \end{equation}
  which converges for $|z|<1/16$.
  $y_0$ satisfies the differential equation $B_4 \cdot y_0(z) = 0$ where
  \begin{equation}\label{eq:B4}
    B_4 = 64z^2 (\theta+1)^3 - 2z (2\theta+1)(5\theta^2+5\theta+2) + \theta^3
  \end{equation}
  and $\theta = z \frac{\md}{\md z}$. Up to a change of variables this is
  \eqref{eq:p4de}; $y_0$ is the unique solution which is analytic at zero and
  takes the value $1$ at zero; the other solutions which are not a multiple of
  $y_0$ have a single or double logarithmic singularity. Let $y_1$ be the
  solution characterized by
  \begin{equation}\label{eq:y1def}
    y_1(z) - y_0(z) \log(z) \in z \QQ[[z]].
  \end{equation}
  Note that it follows from \eqref{eq:y1def} as well as Theorem~\ref{thm:p4at0}
  together with the initial values $s_{4,0} = \frac3{2\pi^2}$ and $r_{4,0} =
  s_{4,0} \log(8)$ that $p_4$, for small positive argument, is given by
  \begin{equation}\label{eq:p4y1}
    p_4(x) = - \frac{3x}{4\pi^2} \; y_1\left( \frac{x^2}{64} \right).
  \end{equation}
  If $x\in(2,4)$ and $z=x^2/64$ then the argument $t=\frac{108z^2}{(1-4z)^3}$
  of the hypergeometric function in \eqref{eq:y0hyp} takes the values
  $(1,\infty)$. We therefore consider the solutions of the corresponding
  hypergeometric equation at infinity. A standard basis for these is
  \begin{align}
    t^{-1/3}\pFq32{\frac13,\frac13,\frac13}{\frac23,\frac56}{\frac1t}, \quad
    t^{-1/2}\pFq32{\frac12,\frac12,\frac12}{\frac56,\frac76}{\frac1t}, \quad
    t^{-2/3}\pFq32{\frac23,\frac23,\frac23}{\frac43,\frac76}{\frac1t}.
  \end{align}
  In fact, the second element suffices to express $p_4$ on the interval $(2,4)$
  as shown in Theorem~\ref{thm:p4hyp}.
  \qede
\end{remark}

We close this section by showing that, remarkably, $p_4$ has  modular
  structure.

\begin{remark}\label{rk:p4modularity}
  As shown in \cite{cz-apery} the series $y_0$ defined in
  \eqref{eq:y0def} possesses the modular parameterization
  \begin{equation}\label{eq:y0modular}
    y_0\left( -\frac{\eta(2\tau)^6\eta(6\tau)^6}{\eta(\tau)^6\eta(3\tau)^6} \right)
    = \frac{\eta(\tau)^4\eta(3\tau)^4}{\eta(2\tau)^2\eta(6\tau)^2}.
  \end{equation}
  Here $\eta$ is the \emph{Dedekind eta function} defined as
  \begin{equation}\label{eq:eta}
    \eta(\tau) = q^{1/24}\, \prod_{n=1}^\infty(1-q^n)
    = q^{1/24}\,\sum_{n=-\infty}^\infty (-1)^n q^{n(3n+1)/2},
  \end{equation}
  where $q=e^{2\pi i\tau}$.
  Moreover, the quotient of the logarithmic solution $y_1$ defined in
  \eqref{eq:y1def} and $y_0$ is related to the modular parameter $\tau$ used in
  \eqref{eq:y0modular} by
  \begin{equation}\label{eq:y1y0}
    \exp\left( \frac{y_1(z)}{y_0(z)} \right) =e^{(2\tau+1)\pi i} = -q.
  \end{equation}

  Combining \eqref{eq:y0modular}, \eqref{eq:y1y0} and \eqref{eq:p4y1} one
  obtains the modular representation
  \begin{equation}\label{eq:p4modular}
    p_4\left( 8i\frac{\eta(2\tau)^3\eta(6\tau)^3}{\eta(\tau)^3\eta(3\tau)^3} \right)
    = \frac{6(2\tau+1)}{\pi} \eta(\tau)\eta(2\tau)\eta(3\tau)\eta(6\tau)
  \end{equation}
  valid when the argument of $p_4$ is small and positive. This is the case for
  $\tau = -1/2 + i y$ when $y>0$. Remarkably, the argument attains the value
  $1$ at the quadratic irrationality $\tau = (\sqrt{-5/3}-1)/2$ (the $5/3$rd
  singular value of the next section).  As a consequence, the value $p_4(1)$
  has a nice evaluation which is given in Theorem~\ref{thm:r50cs}.
  \qede
\end{remark}

\section{The density $p_5$}\label{sec:p5}

As shown in \cite{bsw-rw2}, $W_5 (s)$ has simple poles at $- 2, - 4,
\ldots$, compare Figure \ref{fig:W5}. We write $r_{5, k} = \Res_{- 2
k - 2} W_5$ for the residue of $W_5$ at $s=-2k-2$.  A surprising
bonus is an evaluation of $r_{5,0}=p_4(1) \approx 0.3299338011$, the
residue at $s=-2$. This is because in general for $n \ge 4$, one has
$$\Res_{- 2} W_{n+1}=p_{n+1}'(0)=p_n(1),$$ as follows from
\cite[Prop. 1(b)]{bsw-rw2}; here $p_{n+1}'(0)$ denotes the
derivative from the right at zero.

Explicitly, using Theorem \ref{thm:p4hyp04}, we have,
\begin{equation}\label{eq:res50}
  r_{5,0} =p_5'(0) = \frac{2\sqrt{15}}{\pi^2}\,
    {\rm Re}\,{\pFq32{\frac12, \frac12, \frac12}{\frac56,\frac76}{\frac{125}4}}.
\end{equation}

In fact, based on the modularity of $p_4$ discussed in
Remark~\ref{rk:p4modularity} we find:

\begin{theorem}\label{thm:r50cs}
  \begin{equation}\label{eq:r50cs}
    r_{5,0} =
    \frac{1}{2\pi^2}
    \sqrt{\frac{\Gamma(\frac{1}{15})\Gamma(\frac{2}{15})\Gamma(\frac{4}{15})\Gamma(\frac{8}{15})}
    {5\Gamma(\frac{7}{15})\Gamma(\frac{11}{15})\Gamma(\frac{13}{15})\Gamma(\frac{14}{15})}}.
  \end{equation}
\end{theorem}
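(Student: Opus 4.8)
The plan is to combine the identity $r_{5,0}=p_4(1)$ recorded above with the modular representation \eqref{eq:p4modular} of $p_4$, and to evaluate the resulting eta product by the Chowla--Selberg formula. Since $\Res_{-2}W_5=p_4'(0)=p_4(1)$, it suffices to evaluate $p_4(1)$. By Remark~\ref{rk:p4modularity} the argument of $p_4$ on the left of \eqref{eq:p4modular} equals $1$ at $\tau_0=(\sqrt{-5/3}-1)/2=-\tfrac12+\tfrac i2\sqrt{5/3}$, where moreover $2\tau_0+1=i\sqrt{5/3}$; hence
\begin{equation*}
  p_4(1)=\frac{6\,i\sqrt{5/3}}{\pi}\,\eta(\tau_0)\eta(2\tau_0)\eta(3\tau_0)\eta(6\tau_0).
\end{equation*}
Because $r_{5,0}=p_4(1)$ is a positive real (see \eqref{eq:res50}), this is the same as $p_4(1)=\tfrac6\pi\sqrt{\tfrac53}\,\bigl|\eta(\tau_0)\eta(2\tau_0)\eta(3\tau_0)\eta(6\tau_0)\bigr|$, so only the modulus of the eta product is needed and all phase ambiguities disappear.

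To evaluate that modulus I would use that $g(\tau):=\sqrt{\Im\tau}\,|\eta(\tau)|^2$ is $SL_2(\ZZ)$-invariant, together with the Chowla--Selberg formula. Inspecting minimal polynomials, $\tau_0$ and $3\tau_0$ are $SL_2(\ZZ)$-equivalent to the two reduced CM points of discriminant $-15$ (the forms $[2,1,2]$ and $[1,1,4]$), while $2\tau_0$ and $6\tau_0$ are equivalent to the two reduced CM points of discriminant $-60$ (the forms $[3,0,5]$ and $[1,0,15]$); these exhaust the class groups, each of order $2$, of the two relevant orders in $\QQ(\sqrt{-15})$. Since $\Im\tau_0\cdot\Im(2\tau_0)\cdot\Im(3\tau_0)\cdot\Im(6\tau_0)=(\sqrt{15})^4/36=25/4$, invariance of $g$ gives $\tfrac52\,\bigl|\eta(\tau_0)\eta(2\tau_0)\eta(3\tau_0)\eta(6\tau_0)\bigr|^2=\bigl(\prod_{\mathrm{disc}\,-15}g\bigr)\bigl(\prod_{\mathrm{disc}\,-60}g\bigr)$. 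Both orders share the primitive quadratic character $\chi=\bigl(\tfrac{-15}{\cdot}\bigr)$ of conductor $15$, for which $\chi=+1$ exactly on $\{1,2,4,8\}$ and $\chi=-1$ on $\{7,11,13,14\}$, so Chowla--Selberg applied to each order expresses the corresponding class product as an explicit constant --- a power of $2\pi$, a power of $\sqrt{15}$, and the Euler factor at the split prime $2$ that distinguishes the conductor-$2$ order --- times a power of $G:=\prod_k\Gamma(k/15)^{\chi(k)}=\Gamma(\tfrac1{15})\Gamma(\tfrac2{15})\Gamma(\tfrac4{15})\Gamma(\tfrac8{15})\big/\bigl(\Gamma(\tfrac7{15})\Gamma(\tfrac{11}{15})\Gamma(\tfrac{13}{15})\Gamma(\tfrac{14}{15})\bigr)$. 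The two exponents add up to $\tfrac12$, and assembling the constants with the factor $\tfrac6\pi\sqrt{5/3}$ from above should reproduce $r_{5,0}=\frac1{2\pi^2}\sqrt{G/5}$.

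The only genuinely delicate step is the numerology in the previous paragraph: getting the normalization of Chowla--Selberg exactly right for the non-maximal order of discriminant $-60$ (equivalently, the correct Euler/index factor at $2$, which splits since $-15\equiv 1\pmod 8$) and tracking the powers of $\pi$ and $\sqrt{15}$ without slips. As a safeguard one can note that Shimura's theory of CM periods already forces $p_4(1)$ to be an algebraic multiple of $\pi^{-2}\sqrt G$ --- the fourth power of the Chowla--Selberg period of $\QQ(\sqrt{-15})$ being an explicit rational multiple of $\pi^{-2}G$ --- so that a single algebraic constant remains, and this constant is cross-checked against the rapidly convergent series \eqref{eq:res50} to high precision.
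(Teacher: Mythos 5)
Your proposal is correct and follows exactly the route the paper takes: identify $r_{5,0}=p_4(1)$, substitute $\tau=(\sqrt{-5/3}-1)/2$ into the modular representation \eqref{eq:p4modular}, and evaluate the resulting eta product by the Chowla--Selberg formula. The paper's proof is just a two-line version of this; your unpacking of the CM points of discriminants $-15$ and $-60$ and the character $\bigl(\tfrac{-15}{\cdot}\bigr)$ fills in precisely the computation the authors leave implicit.
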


\begin{proof}
  The value $\tau = (\sqrt{-5/3}-1)/2$ in \eqref{eq:p4modular} gives the value
  $p_4(1) = r_{5,0}$.  Applying the Chowla--Selberg formula
  \cite{sc67,borwein-piagm} to evaluate the eta functions yields the claimed
  evaluation.
\end{proof}

Using \cite[Table 4, (ii)]{bz92},
\eqref{eq:r50cs} may be simplified to
\begin{align}\label{eq:r50g2}
  r_{5,0} &= \frac{\sqrt {5}}{40}\,\frac{\Gamma(\frac{1}{15})\Gamma(\frac{2}{15})\Gamma(\frac{4}{15})\Gamma(\frac{8}{15})}{\pi^4 } \\
  &= \frac{3\sqrt {5} }{\pi^3}\,\frac{\left( \sqrt {5}-1 \right)}{2} \,K^2_{15} = \frac{\sqrt{15}}{\pi^3} \,K_{5/3} K_{15},
\end{align}
where $K_{15}$ and $K_{5/3}$ are the complete elliptic integral at the 15th and
$5/3$rd singular values \cite{borwein-piagm}.



Remarkably, these evaluations appear to extend to
$r_{5,1}\approx0.006616730259$, the residue at $s=-4$. Resemblance
to \emph{the tiny nome of Bologna} \cite{bbbg-bessel} led us to
discover
--- and then check to 400 places using \eqref{r50} and \eqref{r51} --- that
\begin{align}\label{eq:r51g}
  r_{5,1} &\stackrel{?}{=} {\frac {13}{1800\sqrt {5}}}\,{\frac {\Gamma(\frac{1}{15})\Gamma(\frac{2}{15})\Gamma(\frac{4}{15})\Gamma(\frac{8}{15})}{{\pi }^{4}}}
  -\frac1{\sqrt {5}}\,{\frac {\Gamma(\frac{7}{15})\Gamma(\frac{11}{15})\Gamma(\frac{13}{15})\Gamma(\frac{14}{15})}{{\pi }^{4}}}.
\end{align}
Using \eqref{eq:r50g2} this evaluation can be neatly restated as
\begin{align}
  r_{5,1} \stackrel{?}{=} \frac{13}{225} r_{5,0} -\frac2{5\pi^4}\frac{1}{r_{5,0}}.
\end{align}

We summarize our knowledge as follows:

\begin{theorem}\label{thm:p5}
  The density $p_5$ is real analytic on $(0,5)$ except at $1$ and $3$
  and satisfies the differential equation $A_5 \cdot p_5(x) = 0$
  where $A_5$ is the operator
  \begin{align}\label{eq:p5de}
    A_5 &= x^6 (\theta+1)^4
    - x^4 (35\theta^4 + 42\theta^2 + 3) \\
    &\quad+ x^2 (259(\theta-1)^4 + 104(\theta-1)^2)
    - \left( 15(\theta-3)(\theta-1) \right)^2 \nonumber
  \end{align}
  and $\theta=x D_x$.
  Moreover, for small $x>0$,
  \begin{equation}\label{eq:p5at0}
    p_5 (x) = \sum_{k = 0}^\infty r_{5, k}\, x^{2 k + 1}
  \end{equation}
  where
  \begin{align}\label{eq:res5rec}
    \left( 15 (2k+2) (2k+4) \right)^2 r_{5,k+2}
    &= \left( 259(2k+2)^4 + 104(2k+2)^2 \right) r_{5,k+1} \nonumber\\
    &\quad- \left( 35(2k+1)^4 + 42(2k+1)^2 + 3 \right) r_{5,k}
    + (2k)^4 r_{5,k-1}
  \end{align}
  with explicit initial values  $r_{5,-1}=0$ and $r_{5,0}$, $r_{5,1}$
  given by \eqref{eq:r50g2} and \eqref{eq:r51g} above.
\end{theorem}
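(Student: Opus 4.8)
The plan is to assemble the statement from ingredients already in place. The real-analyticity of $p_5$ on $(0,5)$ away from $1$ and $3$ is exactly the case $n=5$ of Theorem~\ref{thm:pnde}: since $5$ is odd, $p_5$ is real analytic except at $0$ and the odd integers $m\le 5$, and inside the open interval $(0,5)$ these are precisely $1$ and $3$. For the differential equation, recall from the proof of Theorem~\ref{thm:pnde} that $p_5$ is annihilated — distributionally, and hence classically away from the bad points — by the operator produced from the functional equation of $W_5$ via the Mellin dictionary of Example~\ref{eg:p4de}. What remains is to check that this operator is the $A_5$ displayed in \eqref{eq:p5de}. I would do this by starting from the order-$3$ recurrence for the even moments $W_5(2k)$ (recorded in \cite{bnsw-rw}, or extractable from Theorem~\ref{thm:verrill}), rewriting it as a four-term relation among $W_5(s),W_5(s+2),W_5(s+4),W_5(s+6)$, and translating term by term with $\theta\leftrightarrow$ shift and $x^2\leftrightarrow$ argument shift by $2$ exactly as in \eqref{eq:p4de}; the leading piece $x^6(\theta+1)^4$ and the constant piece $-\bigl(15(\theta-3)(\theta-1)\bigr)^2$ are just the characteristic-polynomial computation \eqref{eq:lcverrill} with $\lambda=3$, which simultaneously re-confirms \eqref{eq:deleadingcoeff} for $n=5$. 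This part is entirely routine once the recurrence for $W_5(2k)$ is written down.

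Next, the expansion \eqref{eq:p5at0}. By \cite{bsw-rw2}, $W_5(s)$ is analytic apart from \emph{simple} poles at $s=-2,-4,-6,\dots$, so by the standard correspondence between the poles of a Mellin transform lying to the left of the fundamental strip and the behaviour of the function at $0$ (Remark~\ref{rk:pnat0}; see \cite[Appendix~B.7]{anacomb}), $p_5$ has an asymptotic expansion at $0$ whose coefficient of $x^{2k+1}$ is $\Res_{s=-2k-2}W_5=r_{5,k}$, and — crucially — there are \emph{no} logarithmic terms precisely because all these poles are simple. Since $p_5$ solves the Fuchsian equation $A_5\cdot p_5=0$ with a regular singular point at $0$, this formal expansion is an honest convergent power-series solution, whose radius of convergence is at least the distance to the next singularity of the leading coefficient $x^4(x^2-1)(x^2-9)(x^2-25)$, namely $1$; hence \eqref{eq:p5at0} is valid on $(0,1)$, in particular for small $x>0$. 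Substituting $y=\sum_{k\ge0}r_{5,k}x^{2k+1}$ into $A_5\cdot y=0$, using $\theta x^m=m\,x^m$, and collecting the coefficient of $x^{2k+5}$ — one contribution from each of the four homogeneous pieces of $A_5$, acting on $x^{2k-1},x^{2k+1},x^{2k+3},x^{2k+5}$ respectively — yields exactly the recurrence \eqref{eq:res5rec}. The value $r_{5,-1}=0$ is simply the absence of a pole of $W_5$ at $s=0$ (equivalently $p_5(x)=O(x)$), and $r_{5,0}$ is the evaluation of Theorem~\ref{thm:r50cs}, restated as \eqref{eq:r50g2} via \cite[Table~4]{bz92}.

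The one genuinely non-rigorous ingredient is the closed form for $r_{5,1}$: the identity \eqref{eq:r51g} is conjectural (hence the ``$\stackrel{?}{=}$''), found by analogy with the Bologna constant and checked numerically to $400$ digits, so I would phrase the theorem as asserting the recurrence and the power-series form unconditionally with $r_{5,-1}=0$ and $r_{5,0}$ as in \eqref{eq:r50g2} proved, while the value of $r_{5,1}$ is carried over from the conjectural identity~\eqref{eq:r51g}. Within the rigorous part, the delicate point is the absence of logarithmic terms in the local expansion at $0$: the indicial polynomial of $A_5$ at the origin is $(\theta-1)^2(\theta-3)^2$, whose double roots would \emph{a priori} force solutions with $\log x$ and $x^2\log x$ behaviour, and it is only the simple-pole structure of $W_5$ from \cite{bsw-rw2} that excludes them for $p_5$ itself. (One can also note that the even/odd symmetry $x\mapsto-x$ of $A_5$ forces the analytic Frobenius solution of exponent $1$ to be the odd series $\sum r_{5,k}x^{2k+1}$, so the only freedom left is whether the logarithmic solutions occur — which is exactly what the pole analysis settles.)
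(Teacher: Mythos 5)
Your proposal is correct and follows essentially the same route as the paper's (very terse) proof: the differential equation comes from the functional equation of $W_5$ via the Mellin dictionary of Example~\ref{eg:p4de}, the analyticity statement is the case $n=5$ of Theorem~\ref{thm:pnde}, and the expansion at $0$ comes from the simple-pole structure of $W_5$ combined with the Frobenius analysis at the regular singular point, exactly as for \eqref{eq:p4at0}; your derivation of \eqref{eq:res5rec} by substituting the odd series into $A_5$ and matching the coefficient of $x^{2k+5}$ is the Mellin-dual of the paper's citation of the residue recurrence from \cite[Ex.~3]{bsw-rw2}, and it checks out term by term. Your added observations --- that the double roots $1,3$ of the indicial polynomial $(\theta-1)^2(\theta-3)^2$ make the exclusion of logarithmic terms a genuine issue settled only by the simplicity of the poles of $W_5$, and that the closed form \eqref{eq:r51g} for $r_{5,1}$ carries a ``$\stackrel{?}{=}$'' so the theorem's explicit initial value at that index is only numerically certified (though $r_{5,1}$ itself is rigorously defined via \eqref{r51}) --- are both correct and are points the paper's own proof passes over in silence.
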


\begin{proof}
First, the differential equation \eqref{eq:p5de} is computed as was
that for $p_4$, see \eqref{eq:p4de}. Next, as detailed in \cite[Ex.
3]{bsw-rw2} the residues satisfy the recurrence relation
\eqref{eq:res5rec} with the given initial values. Finally,
proceeding as for \eqref{eq:p4at0}, we deduce that \eqref{eq:p5at0}
holds for small $x>0$.
\end{proof}

 Numerically, the series \eqref{eq:p5at0} appears to converge for $|x|<3$ which
is in accordance with $\tfrac19$ being a root of the characteristic
polynomial of the recurrence \eqref{eq:res5rec}; see also
\eqref{eq:deleadingcoeff}. The series \eqref{eq:p5at0} is depicted
in Figure \ref{fig:p5x}.

\begin{figure}[htbp]
  \begin{center}
    \includegraphics[width=0.5\textwidth]{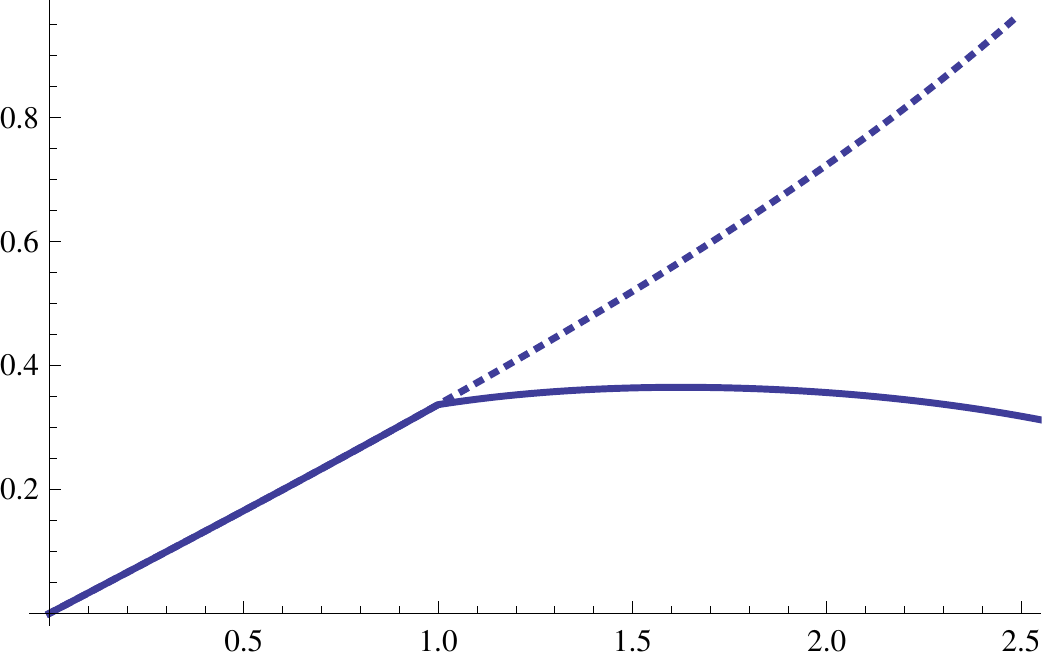}
    \caption{The series \eqref{eq:p5at0} (dotted) and $p_5$.}
    \label{fig:p5x}
  \end{center}
\end{figure}

Since the poles of $W_5$ are simple, no logarithmic terms are
involved in \eqref{eq:p5at0} as opposed to \eqref{eq:p4at0}. In particular, by
computing a few more residues from \eqref{eq:res5rec},
\[ p_5(x) = 0.329934\,x + 0.00661673\,{x}^{3} + 0.000262333\,{x}^{5} + 0.0000141185\,{x}^{7} + O(x^9) \]
near 0 (with each coefficient given to six digits of precision only),
explaining the strikingly straight shape of $p_5(x)$ on $[0,1]$. This
phenomenon was observed by Pearson \cite{Pea06} who stated that for $p_5(x)/x$
between $x=0$ and $x=1$,
\begin{quote}
  ``the graphical construction, however carefully reinvestigated, did not
  permit of our considering the curve to be anything but a \emph{straight}
  line\ldots Even if it is not absolutely true, it exemplifies the
  extraordinary power of such integrals of $J$ products [that is,
  \eqref{eq:pnbessel}] to give extremely close approximations to such simple
  forms as horizontal lines.''
\end{quote}
This conjecture was investigated in detail in \cite{fettis63} wherein the
nonlinearity was first rigorously established.  This work and various more
recent papers highlight the difficulty of computing the underlying Bessel
integrals.

\begin{remark}
  Recall from Example \ref{eg:W4p4mellinat0} that the asymptotic behaviour of
  $p_n$ at zero is determined by the poles of the moments $W_n(s)$.  To obtain
  information about the behaviour of $p_n(x)$ as $x \to n^-$, we consider the
  ``reversed'' densities $\tilde{p}_n(x) = p_n(n-x)$ and their moments
  $\tilde{W}_n(s)$. For non-negative integers $k$,
  \begin{equation*}
    \tilde{W}_n(k) = \int_0^n x^k \tilde{p}_n(x) \id x
    = \int_0^n (n-x)^k p_n(x) \id x
    = \sum_{j=0}^k \binom{k}{j} (-1)^j n^{k-j} W_n(j)
  \end{equation*}
  On the other hand, we can find a recurrence satisfied by the $\tilde{W}_n(s)$
  as follows: a differential equation for the densities $\tilde{p}_n(x)$ is
  obtained from Theorem \ref{thm:pnde} by a change of variables. The Mellin
  transform method as described in Example \ref{eg:p4de} then provides a
  recurrence for the moments $\tilde{W}_n(s)$.  We next apply the same
  reasoning as in \cite{bsw-rw2} to obtain information about the pole structure of
  $\tilde{W}_n(s)$. It should be emphasized that this involves knowledge about
  initial conditions in term of explicit values of initial moments $W_n(2k)$.

  For instance, in the case $n=4$, we find that the moments $\tilde{W}_4(s)$
  have simple poles at $-\tfrac32, -\tfrac52, -\tfrac72, \ldots$ which
  predicts an expansion of $p_4(x)$ as given in Theorem \ref{thm:p4at4}.

  For $n=5$, we learn that $\tilde{W}_5(s)$ has simple poles at
  $s=-2,-3,-4,\ldots$. It then follows, as for \eqref{eq:p5at0}, that $p_5 (x) =
  \sum_{k = 0}^\infty \tilde{r}_{5, k}\, (x-5)^{k + 1}$ for $x\le5$ and close
  to $5$. The $\tilde{r}_{5,k}$ are the residues of $\tilde{W}_5(s)$ at
  $s=-k-2$.
  \qede
\end{remark}

\section{Derivative evaluations of $W_n$}\label{sec:Wndiff}

As illustrated by Theorem \ref{thm:p4at0}, the residues of $W_n(s)$ are very
important for studying the densities $p_n$ as they directly translate into
behaviour of $p_n$ at $0$. The residues may be obtained as a linear combination of
the values of $W_n(s)$ and $W_n'(s)$.

\begin{example}[Residues of $W_n$]
  Using the functional equation for $W_3(s)$ and L'H\^{o}pital's rule we find that
  the residue at $s=-2$ can be expressed as
  \begin{equation}\label{eq:W3res}
    \Res_{-2}(W_3) = \frac{8+12W_3'(0)-4W_3'(2)}9.
  \end{equation}
  This is a general principle and we likewise obtain for instance:
  \begin{align}\label{r50}
    \Res_{-2}(W_5) &= \frac{16+1140W_5'(0)-804W_5'(2)+64W_5'(4)}{225},\\\label{r51}
    \Res_{-4}(W_5) &= \frac{26 \Res_{-2}(W_5) -16 -20 W_5'(0) + 4 W_5'(2)}{225}.
  \end{align}
  In the presence of double poles, as for $W_4$,
  \begin{equation}\label{w4coeff}
     \lim_{s \to -2}(s+2)^2 W_4(s) = \frac{3+4W_4'(0)-W_4'(2)}8
  \end{equation}
  and for the residue:
  \begin{equation}\label{w4resid}
    \Res_{-2}(W_4) = \frac{9+18W_4'(0)-3W_4'(2)+4W_4''(0)-W_4''(2)}{16}.
  \end{equation}
  Equations (\ref{w4coeff}, \ref{w4resid}) are used in Example \ref{eg:W4p4mellinat0} and
  each unknown is evaluated below.
  \qede
\end{example}

We are therefore interested in evaluations of the derivatives of $W_n$ for even
arguments.

\begin{example}[Derivatives of $W_3$ and $W_4$]
  Differentiating the double integral for $W_3(s)$ (\ref{def:Wns}) under
  the integral sign, we have
  \begin{equation*}
    W_3'(0) = \frac12 \int_0^1 \int_0^1 \log(4\sin(\pi y)\cos(2\pi x)+3-2\cos(2\pi y)) \id x \id y.
  \end{equation*}
  Then, using \[\int_0^1 \log(a+b\cos(2\pi x)) \id x =
  \log\left(\frac12\left(a+\sqrt{a^2-b^2}\right)\right) \ \mathrm{for} \ a >b >0,\] we deduce
  \begin{equation}\label{w3-cl}
    W_3'(0) = \int_{1/6}^{5/6} \log(2\sin(\pi y)) \id y = \frac{1}{\pi}\,\Cl\left(\frac{\pi}{3}\right),
  \end{equation}
  where $\Cl$ denotes the \emph{Clausen} function.  Knowing as we do that the
  residue at $s=-2$ is $2/(\sqrt{3}\pi)$, we can thus also obtain from (\ref{eq:W3res}) that
  \[ W_3'(2)=2+ \frac{3}{\pi} \Cl\left(\frac{\pi}{3}\right) - \frac{3\sqrt{3}}{2\pi}. \]

  In like fashion,
  \begin{align}\label{w4diffm1}
    W_4'(0) & = \frac 3{8\pi^2} \int_0^\pi \int_0^\pi
      \log\left(3+2\,\cos x+2\,\cos y +2\,\cos(x-y) \right) \id x \id y \nonumber\\
    & = \frac{7}{2}\frac{\zeta(3)}{\pi^2}.
  \end{align}
  The final equality will be shown in Example \ref{ex:w340}. Note that we may
  also write
  \begin{equation*}
    W_3'(0) = \frac{1}{8\pi^2} \int_0^{2\pi}\int_0^{2\pi}
      \log(3+2\cos x+2\cos y+2\cos(x-y)) \id x \id y.
  \end{equation*}
  The similarity between $W_3'(0)$ and $W_4'(0)$ is not coincidental, but comes
  from applying
  \begin{equation*}
    \int_0^1 \log\left((a+\cos 2\pi x)^2 + (b+\sin 2\pi x)^2\right) \id x
    = \left\{ \begin{array}{ll}
     \log(a^2+b^2) & \text{if $a^2+b^2>1$,} \\
     0 & \text{otherwise}
   \end{array} \right.
  \end{equation*}
  to the triple integral of $W_4'(0)$. As this reduction breaks the symmetry,
  we cannot apply it to $W_5'(0)$ to get a similar integral.
  \qede
\end{example}

In general, differentiating the Bessel integral expression
\begin{equation}\label{eq-broadhurst}
  W_n(s) = 2^{s+1-k} \frac{\Gamma(1+\tfrac{s}{2})}{\Gamma(k-\tfrac{s}{2})}
    \ift x^{2k-s-1} \left(-\frac{1}{x}\frac{\md}{\md x}\right)^k J_0^n(x) \id x,
\end{equation}
obtained by David Broadhurst \cite{broadhurst-rw} and discussed in \cite{bsw-rw2},
under the integral sign gives
\begin{align}\label{eq:wnd0}
  W_n'(0) & = n \ift \left(\log\left(\frac 2x  \right)-\gamma\right)
    J_0^{n-1}(x)J_1(x) \id x \nonumber\\
  & = \log(2)-\gamma-n \ift \log(x)J_0^{n-1}(x)J_1(x) \id x,
\end{align}
where $\gamma$ is the \emph{Euler-Mascheroni} constant, and
\begin{equation*}
  W_n''(0) = n \ift \left(\log\left(\frac 2x  \right)-\gamma\right)^2
    J_0^{n-1}(x)J_1(x) \id x.
\end{equation*}
Likewise
\begin{equation*}
  W_n'(-1) = (\log(2) -\gamma)W_n(-1)-\ift \log(x) J_0^n(x) \id x,
\end{equation*}
and
\begin{equation*}
  W_n'(1) = \ift \frac{n}{x} J_0^{n-1}(x)J_1(x)\left(1-\gamma-\log(2x)\right) \id x.
\end{equation*}

\begin{remark}
  We may therefore obtain many identities by comparing the above equations to
  known values. For instance,
  \begin{equation*}
    3 \ift \log(x)J_0^2(x)J_1(x) \id x
    = \log(2)-\gamma-\frac{1}{\pi}\Cl\left( \frac{\pi}{3}\right).
  \end{equation*}
  \vskip-\baselineskip\qede
\end{remark}

\begin{example}[Derivatives of $W_5$]
  In the case $n=5$,
  \begin{equation*}
    W_5'(0) = 5 \ift \left( \log  \left( \frac 2t \right) -\gamma \right)
      J_0^4(t) J_1(t) \id t \approx 0.54441256
  \end{equation*}
  with similar but more elaborate formulae for $W_5'(2)$ and $W_5'(4)$.
  Observe that  in general we also have
  \begin{equation}
    W_n'(0) = \log(2)-\gamma-\int_0^1 \left(J_0^n(x)-1\right)  \frac{\md x}{x}
    - \int_1^\infty J_0^n(x) \frac{\md x}{x},
  \end{equation}
 which is useful numerically. \qede
\end{example}

In fact, the hypergeometric representation of $W_3$ and $W_4$ first obtained
in \cite{crandall-rw} and recalled below also makes derivation of the derivatives
of $W_3$ and $W_4$ possible.

\begin{corollary}[Hypergeometric forms]\label{cor:W34hyp}
  For $s$ not an odd integer, we have
  \begin{align}
    W_3(s) &= \frac{1}{2^{2s+1}} \tan\left(\frac{\pi s}{2}\right)
      \binom{s}{\frac{s-1}{2}}^2
      \pFq32{\frac12, \frac12, \frac12}{\frac{s+3}{2}, \frac{s+3}{2}}{\frac14}
    + \binom{s}{\frac{s}{2}} \pFq32{-\frac s2, -\frac s2, -\frac s2}{1, -\frac{s-1}{2}}{\frac14}, \label{eq:cf3s}\\
   \shortintertext{and, if also $\Re s > -2$, we have}
    W_4(s) &= \frac{1}{2^{2s}} \tan\left(\frac{\pi s}{2}\right)
      \binom{s}{\frac{s-1}{2}}^3
      \pFq43{\frac12, \frac12, \frac12 , \frac s2 +1}{\frac{s+3}{2},\frac{s+3}{2},\frac{s+3}{2}}{1}
    + \binom{s}{\frac{s}{2}} \pFq43{\frac12,-\frac s2, -\frac s2, -\frac s2}{1,1,-\frac{s-1}{2}}{1}. \label{eq:cf4s}
  \end{align}
\end{corollary}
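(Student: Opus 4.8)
The plan is to regard both sides of \eqref{eq:cf3s} and \eqref{eq:cf4s} as meromorphic functions of $s$ and to prove equality by a rigidity argument: first show the right-hand side satisfies the same linear difference equation in $s$ that $W_n(s)$ does; next check it has the same pole structure and agrees with $W_n$ at enough points; then upgrade this finite amount of data to equality for all $s$ via Carlson's theorem. (Crandall's original route in \cite{crandall-rw} was instead to evaluate the Bessel moment $\ift x^{s+1} J_0^n(x) \id x$ for $n=3,4$ directly, via a Weber--Schafheitlin-type formula for the integral of a product of three or four Bessel functions; the argument below stays within the tools of Sections~\ref{sec:pn}--\ref{sec:p4}.)

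\emph{The difference equation.} By the Mellin-transform bookkeeping of Example~\ref{eg:p4de}, $W_n(s)$ is annihilated by an explicit second-order operator $L_n = E^{2} - a_n(s)\,E + b_n(s)$ with rational coefficients, where $E$ is the shift $s \mapsto s+2$ (for $n=3$ this is the order-two recursion for $W_3$ recorded in Section~\ref{sec:pn}, and for $n=4$ it is \eqref{eq:W4fe}). Its solution space is two-dimensional over the field of $2$-periodic functions, with one ``dominant'' solution growing like $n^{s}$ (the characteristic roots are $9,1$ for $n=3$ and $16,4$ for $n=4$) and one slowly growing solution. Each summand of the right-hand side of \eqref{eq:cf3s} (resp.\ \eqref{eq:cf4s}) is a product of the $2$-periodic factor $\tan(\pi s/2)$ or $1$, an explicit quotient of $\Gamma$-functions, and a ${}_3F_2$ (resp.\ ${}_4F_3$) whose parameters are rational in $s$, hence is holonomic in $s$; using the contiguous relations for these hypergeometric functions---equivalently, a Gosper/Zeilberger-type certificate---one verifies that each summand is individually annihilated by $L_n$, so the whole right-hand side solves $L_n$.

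\emph{Boundary data.} At $s=0$ and $s=2$ the factor $\tan(\pi s/2)$ vanishes while the remaining ${}_{p+1}F_p$ terminates (one of its upper parameters becomes a non-positive integer), collapsing the right-hand side to finite sums equal to $W_n(0)=1$ and $W_n(2)=n$, in agreement with \eqref{eq:Wn-even}; running the recursion $L_n$ backwards then forces agreement with $W_n$ at all even integers. One similarly checks the singularities: Stirling's formula shows the first summand has precisely a simple (for $n=3$) or double (for $n=4$) pole at each $s=-2k$, while the second summand is regular there, matching the known pole structure of $W_3$ and $W_4$; the residues can be matched explicitly if desired, e.g.\ recovering $\Res_{-2}W_3 = 2/(\sqrt{3}\,\pi)$.

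\emph{From even integers to all $s$.} Let $D_n(s) = W_n(s) - (\text{right-hand side})$. By the previous two steps $D_n$ is a pole-free solution of $L_n$ vanishing at every non-negative even integer. From \eqref{def:Wns} one has $|W_n(\sigma+i\tau)| \le n^{\sigma}$ for $\sigma\ge 0$, so $W_n$ is bounded on vertical lines and of exponential type $\log n$ in the right half-plane; by Stirling estimates on the $\Gamma$-quotients---and, in the $n=4$ case, a uniform estimate for $\pFq43{\frac12,\frac12,\frac12,\frac s2+1}{\frac{s+3}2,\frac{s+3}2,\frac{s+3}2}{1}$ as $|\Im s|\to\infty$, including near the edge of its domain of convergence where the parametric excess $\Re s+2$ tends to $0$---the right-hand side satisfies the same bounds. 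Since $\log n < \pi/2$ for $n=3,4$, $D_n$ meets the hypotheses of Carlson's theorem on the lattice $2\ZZ_{\ge 0}$, so $D_n\equiv 0$ on $\Re s\ge 0$ and, by analytic continuation, on all of $\mathbb{C}$. The main obstacle is precisely this last estimate: the hypergeometric factors must be controlled uniformly enough to keep the growth below the Carlson threshold, and one may use $L_n$ to reduce the estimates to the strip $0\le\Re s\le 2$. (That $\log 5 > \pi/2$ is one reason the argument as given does not extend to $W_5$.)
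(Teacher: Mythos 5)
The paper itself contains no proof of this corollary: \eqref{eq:cf3s} and \eqref{eq:cf4s} are explicitly ``recalled'' from \cite{crandall-rw} (see also \cite{bnsw-rw,bsw-rw2}), where they arise by an entirely different mechanism --- in effect one computes the Bessel moment $\ift x^{s+1}J_0^n(x)\id x$ for $n=3,4$ as a Meijer $G$-function (a Mellin--Barnes integral) and expands that integral by residues \`a la Slater, the two hypergeometric summands in \eqref{eq:cf3s}, \eqref{eq:cf4s} being exactly the two residue series attached to the two families of poles of the integrand; this is also where the $\tan(\pi s/2)$ and binomial prefactors come from. Your proposal replaces this with a holonomic-plus-Carlson rigidity argument, much closer in spirit to what the paper does in Section \ref{sec:Wn} to extend \eqref{k3} and \eqref{k4}. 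The strategy is viable and has the merit of explaining why the right-hand side is forced once it solves the recursion and matches \eqref{eq:Wn-even}; the cost is that several of its ingredients are nontrivial verifications that you only assert.

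Two of these are genuine gaps as written. First, the claim that each summand is \emph{individually} annihilated by $L_n$ is the crux of the argument and is not proved; for complex $s$ you need an actual Zeilberger/contiguity certificate for the shift $s\mapsto s+2$ (say after $s=2\sigma$), plus a check that the telescoped boundary terms vanish for the non-terminating series --- harmless at argument $\tfrac14$, but delicate for the ${}_4F_3$ at argument $1$, whose convergence is governed by the parametric excess $s+2$ (whence the hypothesis $\Re s>-2$). Second, Carlson's theorem requires $D_n$ to be analytic in the closed right half-plane, and your right-hand side is not obviously analytic at the positive odd integers: at $s=2m+1$ the factor $\tan(\pi s/2)$ has a simple pole, and simultaneously the second summand's lower parameter $-\tfrac{s-1}{2}$ passes through the non-positive integer $-m$, so \emph{both} summands are singular there. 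These singularities must cancel (this is precisely why the corollary excludes odd $s$), but that cancellation has to be established as part of the proof, and your pole analysis only treats $s=-2k$. Your own flagged obstacle --- a uniform vertical-line bound for the ${}_4F_3$ at $1$ --- is real but comparatively routine once the excess is bounded away from $0$ on $\Re s=0$; by contrast, the two points above are where the proof actually lives.
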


\begin{example}[Evaluation of $W_3'(0)$ and $W_4'(0)$] \label{ex:w340}
If we write  (\ref{eq:cf3s}) or (\ref{eq:cf4s}) as
$W_n(s)=f_1(s)F_1(s)+f_2(s)F_2(s)$, where $F_1, F_2$ are the corresponding
hypergeometric functions, then it can be readily verified that $f_1(0) =
f_2'(0) = F_2'(0) =0$. Thus, differentiating (\ref{eq:cf3s}) by appealing to
the product rule we get:
\begin{equation*}
  W_3'(0) = \frac{1}{\pi} \pFq32{\frac12, \frac12, \frac12}{\frac32, \frac32}{\frac14}
  = \frac{1}{\pi} \Cl \left( \frac{\pi}{3} \right).
\end{equation*}
The last equality follows from setting $\theta=\pi/6$ in the identity
\begin{equation*}
  2\sin(\theta) \pFq32{\frac12, \frac12, \frac12}{\frac32, \frac32}{\sin^2 \theta}
  = \Cl\left( 2\,\theta \right) +2\,\theta \log \left( 2\sin \theta \right).
\end{equation*}

Likewise, differentiating  (\ref{eq:cf4s}) gives
\begin{equation}
  W_4'(0) = \frac{4}{\pi^2} \pFq43{\frac12, \frac12, \frac12, 1}{\frac32, \frac32, \frac32}{1}
  = \frac{7 \zeta(3)}{2\pi^2},
\end{equation}
thus verifying (\ref{w4diffm1}). In this case the hypergeometric
evaluation
\begin{equation*}
  \pFq43{\frac12, \frac12, \frac12, 1}{\frac32, \frac32, \frac32}{1}
  = \sum_{n=0}^\infty \frac{1}{(2n+1)^3}= \frac 78 \zeta(3),
\end{equation*}
is elementary.
\qede
\end{example}

Differentiating (\ref{eq:cf3s}) at $s=2$ leads to the evaluation
\begin{equation*}
  \pFq32{\frac12, \frac12, \frac12}{\frac52, \frac52}{\frac14}
  = \frac{27}{4}\left(\Cl\left(\frac\pi 3\right)-\frac{\sqrt{3}} 2\right),
\end{equation*}
while from (\ref{eq:cf4s}) at $s=2$ we obtain
\begin{equation}
  W_4'(2) = 3 + \frac{14 \zeta(3) - 12}{\pi^2}.
\end{equation}
Thus we have enough information to evaluate (\ref{w4coeff}) (with the answer $3/(2\pi^2)$).

Note that with two such starting values, all derivatives of $W_3(s)$ or $W_4(s)$
at even $s$ may be computed recursively.

We also note here that the same technique yields
\begin{align}\label{eq:w3dd}
  W_3''(0) & = \frac{\pi^2}{12} - \frac{2}{\pi} \sum_{n=0}^\infty
    \frac{\binom{2n}{n}}{4^{2n}}\frac{ H_{n+1/2}}{(2n+1)^2} \\
  & = \frac{\pi^2}{12} + \frac{4 \log (2)}{\pi}\Cl\left(\frac\pi 3\right)
  - \frac{4}{\pi} \sum_{n=0}^\infty \frac{\binom{2n}{n}}{4^{2n}}
    \frac{\sum_{k=0}^n \frac 1{2k+1}}{(2n+1)^2},
\end{align}
and, quite remarkably,
\begin{align}\label{eq:w4dd}
  W_4''(0) & = \frac{\pi^2}{12}+\frac{7\zeta(3)\log (2)}{\pi^2}
  + \frac{4}{\pi^2} \sum_{n=0}^\infty \frac{H_n-3H_{n+1/2}}{(2n+1)^3} \\
  & = \frac{24{\rm Li}_4\left(\frac 12\right)-18\zeta(4)+21\zeta(3) \log (2)
  - 6\zeta(2)\log^2 (2) +\log^4 (2)}{\pi^2},\nonumber
\end{align}
where the very final evaluation is obtained from results in \cite[\S5]{bzb}.
Here ${\rm Li}_4(1/2)$ is the \emph{polylogarithm} of order 4, while $H_n
:=\gamma+\Psi(n+1)$ denotes the $n$th harmonic number, where $\Psi$ is the
\emph{digamma} function. So for non-negative integers $n$, we
have explicitly $H_n= \sum_{k=1}^n 1/k$, as before, and
\begin{equation*}
  H_{n+1/2}= 2\sum_{k=1}^{n+1} \frac1{2k-1}-2\log (2).
\end{equation*}

An evaluation of $W_3''(0)$ in terms of polylogarithmic constants is given in
\cite{logsin1}. In particular, this gives an evaluation of the sum on the
right-hand side of \eqref{eq:w3dd}.

Finally, the corresponding sum for $W_4''(2)$ may be split into a telescoping
part and a part involving $W_4''(0)$. Therefore, it can also be written as a
linear combination of the constants used in (\ref{eq:w4dd}). In summary, we have
all the pieces to evaluate (\ref{w4resid}), obtaining the answer $9\log
(2)/(2\pi^2)$.

\subsection{Relations with Mahler measure}

For a (Laurent) polynomial $f (x_1, \ldots, x_n)$, its \emph{logarithmic Mahler
measure}, see for instance \cite{rtv-mahler}, is defined as
\[ m (f) = \int_0^1 \ldots \int_0^1 \log \left| f \left( e^{2 \pi i t_1},
   \ldots, e^{2 \pi i t_n} \right) \right| \md t_1 \cdots \md t_n. \]
Recall that the $s$th moments of an $n$-step random walk are given by
\[ W_n (s) = \int_0^1 \ldots \int_0^1 \left| \sum_{k = 1}^n e^{2 \pi
   i t_k} \right|^s \md t_1 \cdots \md t_n =\|x_1 +
   \ldots + x_n \|_s^s \]
where $\| \cdot \|_p$ denotes the $p$-norm over the unit $n$-torus, and hence
\[ W_n' (0) = m (x_1 + \ldots + x_n) = m (1 + x_1 + \ldots + x_{n-1}) . \]
Thus the derivative evaluations in the previous sections are also Mahler
measure evaluations. In particular, we rediscovered
\[ W_3' (0) = \frac{1}{\pi} \tmop{Cl} \left( \frac{\pi}{3} \right) = L'
   (\chi_{- 3}, - 1) = m (1 + x_1 + x_2), \]
along with
\[ W_4' (0) = \frac{7 \zeta (3)}{2 \pi^2} = m (1 + x_1 + x_2 + x_3) \]
which are both due to C. Smyth \cite[(1.1) and (1.2)]{rtv-mahler} with proofs  first published in \cite[Appendix 1]{boyd}.

With this connection realized, we find the following conjectural expressions
put forth by Rodriguez-Villegas, mentioned in different form in \cite{finch},
\begin{equation}\label{eq:vil1} W_5'(0) \stackrel{?}{=} \left(\frac{15}{4\pi^2}\right)^{5/2}\,
  \int_0^\infty \left\{\eta^3(e^{-3t})\eta^3(e^{-5t})
  +\eta^3(e^{-t})\eta^3(e^{-15t})\right\} t^3\,\md t \end{equation}
and
\begin{equation}\label{eq:vil2} W_6'(0) \stackrel{?}{=} \left(\frac{3}{\pi^2}\right)^{3}\,
  \int_0^\infty \,\eta^2(e^{-t})\eta^2(e^{-2t}) \eta^2(e^{-3t})\eta^2(e^{-6t})\,t^4\,\md t, \end{equation}
where $\eta$ was defined in~\eqref{eq:eta}.  We have confirmed numerically that
the evaluation of $W_5'(0)$ in (\ref{eq:vil1}) holds to $600$ places.
Likewise, we have confirmed  that (\ref{eq:vil2}) holds to $80$ places. Details
of these somewhat arduous confirmations are given in \cite{bb-combat}.

Differentiating the series expansion for $W_n(s)$ obtained in \cite{bnsw-rw}
term by term, we obtain
\begin{equation}\label{eq:WD0v2}
  W_n'(0) = \log (n) - \sum_{m=1}^\infty \frac{1}{2m}
    \sum_{k=0}^m \binom{m}{k} \frac{(-1)^k W_n(2k)}{n^{2k}}.
\end{equation}
On the other hand, from \cite{rtv-mahler} we find the strikingly similar
\begin{equation}\label{eq:WD0v}
  W_n'(0) = \frac12 \log (n) - \frac{\gamma}{2} - \sum_{m=2}^\infty \frac{1}{2m}
    \sum_{k=0}^m \binom{m}{k} \frac{(-1)^k W_n(2k)}{k! n^{k}}.
\end{equation}


Finally, we note that $W_n(s)$ itself is a special case of \textit{zeta Mahler
measure} as introduced recently in \cite{aka}.

\section{New results on the moments $W_n$}\label{sec:Wn}

From \cite{bbbg-bessel} equation (23), we have for $k>0$ even,
\begin{equation}\label{k3}
W_3(k) =  \frac{3^{k+3/2}}{\pi \, 2^k\, \Gamma(k/2+1)^2}  \int_0^\infty  t^{k+1}K_0(t)^2 I_0(t) \md t,
\end{equation}
where $I_0(t), K_0(t)$ denote the \textit{modified Bessel functions} of the first and second kind, respectively.

Similarly, \cite{bbbg-bessel} equation (55) states that for $k>0$ even,
\begin{equation}\label{k4}
W_4(k) = \frac{4^{k+2}}{\pi^2 \, \Gamma(k/2+1)^2} \int_0^\infty t^{k+1}K_0(t)^3 I_0(t) \md t.
\end{equation}

Equation \eqref{k3} can be formally reduced to a closed form as a $_3F_2$ (for
instance by \textit{Mathematica}). At $k=\pm 1$, the closed form agrees
with $W_3(\pm 1)$. As both sides of \eqref{k3} satisfy the same recursion
(\cite{bbbg-bessel} equation (8)), we see that it in fact holds for all
integers $k>-2$.

In the following we shall use Carlson's theorem (\cite{titchmarsh}) which states:

\emph{Let
$f$ be analytic in the right half-plane $\Re z \ge 0$ and of exponential type
with the additional requirement that $$|f(z)| \le M e^{d |z|}$$ for some $d <
\pi$ on the imaginary axis $\Re z = 0$. If $f(k)=0$ for $k=0,1,2,\ldots$ then
$f(z)=0$ identically}.
We then have the following:

\begin{lemma} Equation (\ref{k3}) holds for all $k$ with $\mathrm{Re}\, k>-2$.
\end{lemma}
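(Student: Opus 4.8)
The plan is to apply Carlson's theorem to the difference of the two sides of \eqref{k3}, suitably normalized so as to remove the $\Gamma$-factors. Concretely, set
\[
  f(z) = \frac{\pi\, 2^z\, \Gamma(z/2+1)^2}{3^{z+3/2}}\, W_3(z) - \int_0^\infty t^{z+1} K_0(t)^2 I_0(t) \id t.
\]
The left summand is an entire function of $z$ times $W_3(z)$, and $W_3$ is known (from \cite{bnsw-rw}, recalled in the excerpt) to have an analytic continuation to the half-plane $\Re z > -2$ with its first pole at $z=-2$; multiplication by $\Gamma(z/2+1)^2$, whose poles are at $z = -2, -4, \ldots$, is harmless in $\Re z > -2$ but would create trouble at $z=-2$, so I would work on the shifted half-plane $\Re z \ge 0$ (or even $\Re z \ge -1$), which is what Carlson's theorem requires anyway. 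On this half-plane $f$ is analytic: the Mellin-type integral $\int_0^\infty t^{z+1} K_0(t)^2 I_0(t)\id t$ converges locally uniformly because $K_0(t)^2 I_0(t) \sim \tfrac12 t^{-1} e^{-t}(\log t)^2$-type decay at infinity (the two $K_0$'s beat the single $I_0$) and $K_0(t)^2 I_0(t) = O((\log t)^2)$ as $t\to 0^+$, so the integrand is $O(t^{\Re z+1}(\log t)^2)$ near $0$ and exponentially small at $\infty$.

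The vanishing $f(k)=0$ for $k=0,1,2,\ldots$ is supplied by the discussion just before the lemma: \eqref{k3} holds for all integers $k>-2$ because it is checked at $k=\pm 1$ and both sides satisfy the same three-term recursion (\cite{bbbg-bessel}, eq.~(8)); in particular it holds at every non-negative integer, which gives $f(k)=0$ there. (One should note the recursion argument already extends the identity to \emph{integer} $k>-2$; the content of the lemma is the extension to all complex $k$ with $\Re k>-2$, which is exactly the kind of statement Carlson's theorem is designed for.)

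The main obstacle is the growth estimate on the imaginary axis: one must show $|f(iy)| \le M e^{d|y|}$ with $d<\pi$. For the integral term, write $z = iy$; then $|t^{iy+1}| = t$ is independent of $y$, so $\big|\int_0^\infty t^{iy+1} K_0(t)^2 I_0(t)\id t\big| \le \int_0^\infty t\, K_0(t)^2 I_0(t)\id t$, a finite constant — so that piece is in fact \emph{bounded} on the imaginary axis, contributing $d=0$. The delicate part is the first summand: $|2^{iy}| = 1$, but $|\Gamma(iy/2+1)^2|$ decays like $e^{-\pi|y|/2}$ by Stirling, while $W_3(iy)$ must be controlled. I would bound $|W_3(iy)|$ using the integral representation \eqref{def:Wns}, $W_3(s) = \int_{[0,1]^3}|\sum e^{2\pi i x_k}|^s\id\mathbf{x}$: on the imaginary axis $||\sum e^{2\pi i x_k}|^{iy}| = 1$ wherever the base is nonzero, but near the zero set one gets $|r^{iy}| = 1$ still — the subtlety is only integrability, and in fact $|W_3(iy)|$ grows at most polynomially in $|y|$ (this also follows from the functional equation together with boundedness on a vertical strip). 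Combined with the $e^{-\pi|y|/2}$ decay from the Gamma factors, the first summand is not merely of exponential type $<\pi$ but actually decays, so $f$ satisfies the hypotheses with any $d\in(0,\pi)$. Then Carlson's theorem forces $f\equiv 0$ on $\Re z\ge 0$, and since both sides of \eqref{k3} are analytic on $\Re z>-2$ and agree on $\Re z\ge 0$, analytic continuation gives the identity on all of $\Re z>-2$, proving the lemma.
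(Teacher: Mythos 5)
There is a genuine gap, and it lies in your choice of normalization. You clear the $\Gamma$-factors and apply Carlson's theorem to
\[
f(z)=\frac{\pi\,2^{z}\,\Gamma(z/2+1)^{2}}{3^{z+3/2}}\,W_3(z)-\int_0^\infty t^{z+1}K_0(t)^2I_0(t)\,\mathrm{d}t,
\]
but this $f$ is \emph{not} of exponential type in the right half-plane, which is the first hypothesis of Carlson's theorem (and the one you never address away from the imaginary axis). On the positive real axis $W_3(s)\asymp 3^{s}/s$, so $\Gamma(s/2+1)^2W_3(s)$ grows like $e^{s\log s}$; the Bessel moment $\int_0^\infty t^{s+1}K_0(t)^2I_0(t)\,\mathrm{d}t$ likewise grows like $\Gamma(s+\tfrac12)$ because $K_0(t)^2I_0(t)\sim C\,t^{-3/2}e^{-t}$ at infinity. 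Each summand of $f$ is therefore super-exponential, and the only way their difference could be of exponential type is through the very cancellation the lemma asserts — so the hypothesis cannot be verified without circularity. Your careful estimate that $f$ is \emph{bounded} on the imaginary axis is correct but does not rescue the argument, since Carlson requires the $O(e^{\tau|z|})$ bound throughout $\mathrm{Re}\,z\ge0$, not just on the boundary line.

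The fix is to leave \eqref{k3} alone and apply Carlson's theorem to the difference of its two sides as written, which is what the paper does. The left side satisfies $|W_3(z)|\le 3^{\mathrm{Re}\,z}$ for $\mathrm{Re}\,z\ge0$; the right side is of exponential type precisely because the factorial growth of the Bessel moment is divided out by $\Gamma(z/2+1)^2$ (Stirling gives $\Gamma(z+\tfrac12)/\Gamma(z/2+1)^2\asymp 2^{z}$ up to polynomial factors). The price is on the imaginary axis, where $1/|\Gamma(1+iy/2)|^{2}$ grows like $e^{\pi|y|/2}$, so the right-hand side grows like $e^{\pi|y|/2}$ there — but this is exactly the ``standard estimate'' the paper invokes, and $d=\pi/2<\pi$ is still within Carlson's allowance. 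The remaining ingredients of your write-up (vanishing at the non-negative integers via the recursion, and continuation of the resulting identity to the full domain $\mathrm{Re}\,k>-2$ of convergence of the right-hand side) are correct and coincide with the paper's.
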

\begin{proof}

Both sides of (\ref{k3}) are of exponential type and agree when $k=0,1,2, \ldots$. The standard estimate shows that the right-hand side grows like $e^{|y| \pi/2}$ on the imaginary axis. Therefore the conditions of Carlson's theorem are satisfied and the identity holds whenever the right-hand side converges.
\end{proof}

Using the closed form given by the computer algebra system, we thus have:

\begin{theorem}[Single hypergeometric for $W_3(s)$]
  For $s$ not a negative integer $<-1$,
  \begin{equation}\label{singleh}
    W_3(s) = \frac{3^{s+3/2}}{2\pi}\frac{\Gamma(1+s/2)^2}{\Gamma(s+2)}\,
      \pFq32{\frac{s+2}2, \frac{s+2}2,\frac{s+2}2}{1,\frac{s+3}2}{\frac14}.
  \end{equation}
\end{theorem}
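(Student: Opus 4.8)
The plan is to read \eqref{singleh} off \eqref{k3} once the Bessel moment on its right-hand side has been put in hypergeometric form. By the preceding Lemma, \eqref{k3}, i.e.
\[
  W_3(s) = \frac{3^{s+3/2}}{\pi\,2^s\,\Gamma(s/2+1)^2}\,\ift t^{s+1}K_0(t)^2 I_0(t)\id t ,
\]
holds for every $s$ with $\Re s>-2$, so the theorem reduces to the evaluation
\[
  \ift t^{s+1}K_0(t)^2 I_0(t)\id t
  = \frac{\sqrt\pi}{4}\,\frac{\Gamma(1+s/2)^3}{\Gamma(3/2+s/2)}\,
    \pFq32{\tfrac{s+2}2,\tfrac{s+2}2,\tfrac{s+2}2}{1,\tfrac{s+3}2}{\tfrac14} .
\]
Substituting this into \eqref{k3} and simplifying the gamma factors by Legendre's duplication formula in the form $\Gamma(1+s/2)\,\Gamma(3/2+s/2)=2^{-1-s}\sqrt\pi\,\Gamma(s+2)$ produces exactly the prefactor $\frac{3^{s+3/2}}{2\pi}\,\Gamma(1+s/2)^2/\Gamma(s+2)$ of \eqref{singleh}. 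This gives the identity for $\Re s>-2$; since both sides are holomorphic on the connected set $\mathbb{C}\setminus\{-2,-3,-4,\ldots\}$ — the ${}_3F_2$ at argument $\tfrac14$ converges everywhere and is singular in $s$ only for $s\in\{-3,-5,\ldots\}$, while the poles of $\Gamma(1+s/2)^2$ lie at $s\in\{-2,-4,\ldots\}$, precisely where $W_3$ has its poles — the full statement follows by analytic continuation.

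To prove the Bessel-moment evaluation one can argue directly: insert the series $I_0(t)=\sum_{m\ge 0}(t/2)^{2m}/m!^2$ and use the classical Mellin transform $\ift t^{u-1}K_0(t)^2\id t=\tfrac{\sqrt\pi}{4}\,\Gamma(u/2)^3/\Gamma((u+1)/2)$, valid for $\Re u>0$. For real $s>-2$ the integrand is nonnegative, so Tonelli's theorem both licenses interchanging summation and integration and guarantees finiteness; this yields $\sum_{m\ge 0}\frac{1}{4^m\,m!^2}\cdot\frac{\sqrt\pi}{4}\cdot\frac{\Gamma(m+1+s/2)^3}{\Gamma(m+3/2+s/2)}$, and forming the ratio of consecutive summands identifies this series as its $m=0$ term times the asserted ${}_3F_2(\tfrac14)$ (which converges, its argument being below $1$). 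The identity then propagates from $(-2,\infty)$ to $\Re s>-2$ by holomorphy. Alternatively — this is the route indicated in the paragraph preceding the theorem — one may take the closed form returned by a computer algebra system and certify it on $\Re s>-2$ by Carlson's theorem exactly as in the proof of the Lemma: both sides of \eqref{singleh} are holomorphic there and of exponential type less than $\pi$ on the imaginary axis (the $W_3$ side by the $e^{\pi|y|/2}$ bound already invoked there, the right side because the gamma factors cancel the exponential growth and leave only the mild growth of the ${}_3F_2(\tfrac14)$), and the two sides agree at every non-negative integer because \eqref{k3} has already been shown to hold at every integer $>-2$.

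The one genuinely substantive step is the closed-form evaluation of $\ift t^{s+1}K_0(t)^2 I_0(t)\id t$: whether one locates the right entry for $\ift t^{u-1}I_0(at)K_0(bt)^2\id t$ in the Bessel literature and specializes $a=b=1$, or pushes the Mellin-transform computation through while tracking the strip of convergence. Everything else is routine — the convergence and interchange bookkeeping, the duplication-formula simplification, and the final analytic continuation, including the trivial checks that the right-hand side of \eqref{singleh} is finite and agrees with $W_3$ at $s=-1$ and at the positive half-integers. It is perhaps worth noting, in contrast with the density formulas \eqref{eq:p3hyp} and \eqref{eq:p4hyp04}, that \eqref{singleh} needs no real-part operation: the hypergeometric argument here is $\tfrac14<1$, so the ${}_3F_2$ is a genuinely convergent series, real for real $s$, and no continuation of a hypergeometric function past its circle of convergence enters.
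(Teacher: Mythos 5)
Your proof is correct and follows the paper's route: both rest on the Lemma extending \eqref{k3} to $\Re s>-2$ via Carlson's theorem and then convert the Bessel moment $\ift t^{s+1}K_0(t)^2I_0(t)\id t$ into the stated ${}_3F_2$ with argument $\tfrac14$. The only difference is that you carry out that conversion explicitly (expanding $I_0$ termwise, applying the Mellin transform of $K_0^2$, and cleaning up with the duplication formula — all of which checks out), whereas the paper simply invokes ``the closed form given by the computer algebra system''; you also make explicit the final analytic continuation beyond $\Re s=-2$, which the paper leaves implicit.
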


Turning our attention to negative integers, we have for $k \ge 0$ an integer:
\begin{equation} \label{kn}
W_3(-2k-1) = \frac{4}{\pi^3} \left(\frac{2^k k!}{(2k)!}\right)^2 \int_0^\infty t^{2k}K_0(t)^3 \md t,
\end{equation}
because the two sides satisfy the same recursion (\cite[(8)]{bbbg-bessel}), and agree
when $k=0, 1$ (\cite[(47) and (48)]{bbbg-bessel}).

\begin{remark}
Equation (\ref{kn}) however does not hold when $k$ is not an integer. Also, combining (\ref{kn}) and (\ref{k3}) for $W_3(-1)$, we deduce
\[ \int_0^\infty K_0(t)^2 I_0(t)\, \md t = \frac{2}{\sqrt3 \pi} \int_0^\infty K_0(t)^3 \,\md t = \frac{\pi^2}{2\sqrt{3}} \int_0^\infty J_0(t)^3\, \md t. \]
\end{remark}

From (\ref{kn}), we experimentally determined a single hypergeometric for $W_3(s)$ at negative odd integers:
\begin{lemma} For $k \ge 0$ an integer,
\[ W_3(-2k-1) = \frac{\sqrt{3} \, \binom{2k}{k}^2}{2^{4k+1}3^{2k}} \, _3F_2\left( {{\frac12, \frac12, \frac12}\atop{k+1, k+1}}\bigg| \frac14 \right). \]
\end{lemma}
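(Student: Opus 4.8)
Although the identity was discovered from the Bessel--integral representation \eqref{kn}, the cleanest proof specializes the single hypergeometric form \eqref{singleh} of $W_3(s)$ to $s=-2k-1$. For $k=0$ there is nothing to prove: the point $s=-1$ is admissible in \eqref{singleh}, where the lower parameter $\tfrac{s+3}2=1$ and $\Gamma(s+2)=1$, so \eqref{singleh} collapses to $W_3(-1)=\tfrac{\sqrt3}2\pFq32{\frac12,\frac12,\frac12}{1,1}{\frac14}$, which is the asserted identity at $k=0$. For $k\ge1$ the integer $s=-2k-1$ is one of the values excluded from \eqref{singleh}: $\Gamma(s+2)=\Gamma(1-2k)$ has a pole while the lower parameter $\tfrac{s+3}2=1-k$ of the ${}_3F_2$ is a non-positive integer, so the right-hand side is a $0\cdot\infty$ indeterminate. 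Since $W_3$ is analytic at $-2k-1$ (its only poles lie at the even negative integers), $W_3(-2k-1)$ equals the limit of the right-hand side of \eqref{singleh} as $s\to-2k-1$ through non-integral values, and the whole task is to evaluate this limit.

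To do so I would set $s=-2k-1+\varepsilon$ and follow the three $\varepsilon$-dependent factors. The Gamma factor is harmless: $\Gamma(1+s/2)^2\to\Gamma(\tfrac12-k)^2=\pi\bigl(4^k k!/(2k)!\bigr)^2$. The reciprocal Gamma vanishes to first order, $1/\Gamma(1-2k+\varepsilon)\sim-(2k-1)!\,\varepsilon$, from the residue $(-1)^{2k-1}/(2k-1)!$ of $\Gamma$ at $1-2k$. In the series $\sum_{n\ge0}\frac{((s+2)/2)_n^3}{(1)_n((s+3)/2)_n\,n!}\,4^{-n}$ the denominator Pochhammer $((s+3)/2)_n=(1-k+\tfrac\varepsilon2)_n$ is nonzero and $O(1)$ at $\varepsilon=0$ for $0\le n\le k-1$, but for $n\ge k$ one of its factors equals $1-k+\tfrac\varepsilon2+(k-1)=\tfrac\varepsilon2$, so those terms are $O(1/\varepsilon)$. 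Multiplying by $1/\Gamma(s+2)=O(\varepsilon)$ therefore annihilates the finitely many terms with $n\le k-1$ and leaves only the tail $n\ge k$; interchanging the limit with this geometrically convergent (hence dominated) tail is routine.

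What remains is reindexing. Writing $n=k+m$, one uses $((s+2)/2)_n\to(\tfrac12-k)_{k+m}=(-1)^k(2k)!(2m)!/(4^{k+m}k!\,m!)$, $(1)_{k+m}=(k+m)!$, and $\lim_{\varepsilon\to0}\varepsilon/(1-k+\tfrac\varepsilon2)_{k+m}=2/\bigl((-1)^{k-1}(k-1)!\,m!\bigr)$. Collecting the $m$-independent constants and using $(\tfrac12)_m=(2m)!/(4^m m!)$ together with $(k+1)_m=(k+m)!/k!$, the surviving sum reorganizes as
\[
\sum_{m\ge0}\frac{((2m)!)^3}{256^m(m!)^4((k+m)!)^2}=\frac{1}{(k!)^2}\,\pFq32{\frac12,\frac12,\frac12}{k+1,k+1}{\frac14},
\]
while the accumulated prefactor --- combining $\Gamma(\tfrac12-k)^2$ with the identity $(2k)!(2k-1)!/(k!(k-1)!)=((2k)!)^2/(2(k!)^2)$ and with $16^k/256^k=16^{-k}$ --- simplifies to $\sqrt3\,(k!)^2\binom{2k}{k}^2/(2^{4k+1}3^{2k})$. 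The two factors of $(k!)^2$ cancel, and the claimed identity follows.

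The main obstacle is precisely this bookkeeping: one must correctly isolate the surviving part of the divergent ${}_3F_2$, evaluate the half-integral Pochhammer $(\tfrac12-k)_{k+m}$ and the $\Gamma$-residue with the right signs, and check that after reindexing the series is recognizably $\pFq32{\frac12,\frac12,\frac12}{k+1,k+1}{\frac14}$ with exactly the stated rational-times-$\sqrt3$ constant --- every power of $2$, $3$ and $(-1)$ and every factorial has to be tracked. As an alternative one could bypass the limit, since both sides satisfy the second-order recurrence $9(2k+1)^2 h(k+1)=2(20k^2+1)\,h(k)-(2k-1)^2\,h(k-1)$ --- this is the $s=-2k-3$ case of the functional equation $(s+4)^2W_3(s+4)-2(5s^2+30s+46)W_3(s+2)+9(s+2)^2W_3(s)=0$, which holds in particular at the odd negative integers where all three terms are finite --- so it would suffice to match the two sides at $k=0$ (done above) and at $k=1$; but the $k=1$ case then needs an independent evaluation of $W_3(-3)$, and verifying the recurrence for the right-hand side ${}_3F_2$ a separate creative-telescoping computation, so the limiting route is cleaner.
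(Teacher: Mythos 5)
Your proof is correct, but it is not the paper's proof. The paper disposes of the lemma in two lines: it checks the cases $k=0,1$ directly (using \eqref{kn} and the known Bessel-moment values from \cite{bbbg-bessel}) and then observes that the recursion for $W_3(-2k-1)$ --- exactly the three-term recurrence $9(2k+1)^2h(k+1)=2(20k^2+1)h(k)-(2k-1)^2h(k-1)$ you wrote down --- translates into a contiguous relation for the right-hand ${}_3F_2$, which is verified by extracting the summand and applying Gosper's algorithm. So the route you dismiss in your final paragraph as the less clean alternative is precisely the published argument. Your route instead obtains the lemma as a limiting case of the single-hypergeometric form \eqref{singleh} at its excluded points $s=-2k-1$, $k\ge1$, where the zero of $1/\Gamma(s+2)$ cancels the pole created by the lower parameter $\tfrac{s+3}{2}=1-k$; I have checked your bookkeeping (the residue $-(2k-1)!\,\varepsilon$, the evaluations $\Gamma(\tfrac12-k)^2=\pi\,16^k(k!)^2/((2k)!)^2$ and $(\tfrac12-k)_{k+m}=(-1)^k(2k)!(2m)!/(4^{k+m}k!\,m!)$, the limit $2/((-1)^{k-1}(k-1)!\,m!)$, the cancellation of signs, and the final constant $\sqrt3\,\binom{2k}{k}^2/(2^{4k+1}3^{2k})$) and it is all correct, as is the justification for truncating to the tail $n\ge k$ and passing to the limit termwise. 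What your approach buys is conceptual: it derives the lemma as a genuine corollary of \eqref{singleh} and explains structurally why the lower parameters become $k+1,k+1$ (they index the surviving tail of the divergent series), and it needs no creative-telescoping certificate and no independent evaluation at $k=1$. What the paper's approach buys is brevity and independence from the analytic-continuation machinery: once the two base cases are in hand, the recursion argument is mechanical. Either proof is acceptable.
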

\begin{proof} It is easy to check that both sides agree at $k=0, 1$. Therefore we need only to show that they satisfy the same recursion. The recursion for the left-hand side implies a contiguous relation for the right-hand side, which can be verified by extracting the summand and applying Gosper's algorithm (\cite{aeqb}).
\end{proof}

The integral in (\ref{kn}) shows that $W_3(-2k-1)$ decays to 0 rapidly -- very roughly like $9^{-k}$ as $k \to \infty$ -- and so is never $0$ when $k$ is an integer.

To show that (\ref{k4}) holds for more general $k$ required more work. Using Nicholson's integral representation in \cite{watson-bessel}, \[I_0(t)K_0(t) = \frac{2}{\pi} \int_0^{\pi/2} K_0(2t \sin a) \,\md a,\]
the integral in (\ref{k4}) simplifies to
\begin{equation}\label{kk}
 \frac{2}{\pi} \int_0^{\pi/2} \int_0^\infty t^{k+1} K_0(t)^2 K_0(2t \sin a) \,\md t \md a.
\end{equation}
The inner integral in (\ref{kk}) simplifies in terms of a \emph{Meijer G-function}; \textit{Mathematica} is able to produce
\[ \frac{\sqrt{\pi}}{8 \sin^{k+2} a}\,G_{3,3}^{3,2} \left( {{-\frac12, -\frac12, \frac12}\atop{0,0,0}}\bigg| \frac{1}{\sin^2 a}\right), \]
which transforms to
\[ \frac{\sqrt{\pi}}{8\sin^{k+2} a}\,G_{3,3}^{2,3} \left( {{1,1,1}\atop{\frac32,\frac32,\frac12}}\bigg| \sin^2 a\right). \]
Let $t = \sin^2 a$ in the above, so the outer integral in (\ref{kk}) transforms to
\begin{equation}\label{preeuler}
\frac{\sqrt{\pi}}{16} \int_0^1 t^{-\frac{k+3}2}(1-t)^{-\frac12}\, G_{3,3}^{2,3} \left( {{1,1,1}\atop{\frac32,\frac32,\frac12}}\bigg| t \right)
\,\md t.
\end{equation}
We can resolve this integral by applying the Euler-type integral
\begin{equation}\label{euler}
\int_0^1 t^{-a}(1-t)^{a-b-1} \, G_{p,q}^{m,n} \left({\mathbf{c}\atop\mathbf{d}} \bigg| z t \right) \md t = \Gamma(a-b)\, G_{p+1,q+1}^{m,n+1} \left({{a,\mathbf{c}}\atop{\mathbf{d},b}} \bigg| z \right).
\end{equation}
Indeed, when $k=-1$, the application of (\ref{euler}) recovers the Meijer G representation of $W_4(-1)$ (\cite{bsw-rw2}); that is, (\ref{k4}) holds for $k=-1$.

When $k=1$, the resulting Meijer G-function is
\[  G_{4,4}^{2,4} \left( {{2,1,1,1}\atop{\frac32,\frac32,\frac12,\frac32}}\bigg| 1 \right), \]
to which we apply Nesterenko's theorem (\cite{nest}), deducing a triple integral (up to a constant factor) for it:
\[ \int_0^1\int_0^1\int_0^1 \sqrt{\frac{x(1-x)z}{y(1-y)(1-z)(1-x(1-y z))^3}}\,\md x \md y\md z. \]
We can reduce the triple integral to a single integral,
\[ \int_0^1 \frac{8 E'(t)\big((1+t^2)K'(t)-2E'(t)\big)}{(1-t^2)^2} \,\md t. \]
Now applying the change of variable $t \mapsto (1-t)/(1+t)$, followed by quadratic transformations for $K$ and $E$, we finally get
\[ \int_0^1 \frac{4(1+t)}{t^2}E\left(\frac{2\sqrt{t}}{1+t}\right)\big(K(t)-E(t)\big) \,\md t, \]
which is, indeed, (a correct constant multiple times) the expression for $W_4(1)$ which follows from Section 3.1 in \cite{bsw-rw2}.

We finally observe that both sides of (\ref{k4}) satisfy the same recursion (\cite{bbbg-bessel} equation (9)), hence they agree for $k=0, 1, 2, \ldots$. Carlson's theorem applies with the same growth on the imaginary axis as in (\ref{k3}) and we have proven the following:
\begin{lemma}\label{L4} Equation (\ref{k4}) holds for all $k$ with $\mathrm{Re}\, k>-2$.
\end{lemma}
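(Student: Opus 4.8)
The plan is to treat Lemma~\ref{L4} as the endpoint of the reduction carried out just above, so that what remains is to organize the pieces into a Carlson argument. Write $R(k)$ for the right-hand side of (\ref{k4}). First I would check that $R$ is analytic on the half-plane $\operatorname{Re} k>-2$: near $t=0$ the integrand is $O\big(t^{\operatorname{Re} k+1}(\log t)^3\big)$, integrable exactly when $\operatorname{Re} k>-2$, while as $t\to\infty$ the standard asymptotics of the modified Bessel functions give $K_0(t)^3 I_0(t)=O(t^{-2}e^{-2t})$, so the integral converges locally uniformly and may be differentiated in $k$ under the integral sign; the prefactor $4^{k+2}/\big(\pi^2\Gamma(k/2+1)^2\big)$ is entire. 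Next I would record that $R$ satisfies the same functional equation as $W_4$, namely \cite[(9)]{bbbg-bessel} --- equivalently \eqref{eq:W4fe} after clearing the Gamma factors --- which follows by integrating the Bessel differential equation by parts against $t^{k+1}$.

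Since this functional equation advances the argument in steps of $2$, it then suffices to identify $R$ with $W_4$ at enough small integer arguments, one residue class at a time. The odd class is exactly what the computation preceding the lemma settles: applying the Euler-type Meijer~$G$ identity (\ref{euler}) to (\ref{preeuler}) at $k=-1$ reproduces the Meijer~$G$ form of $W_4(-1)$ from \cite{bsw-rw2}, while at $k=1$ the resulting $G$-function is converted by Nesterenko's theorem \cite{nest} into a triple integral and then, via the indicated change of variables and quadratic transformations for $K$ and $E$, into the known elliptic-integral expression for $W_4(1)$. The even class is handled by the base cases $R(0)=1=W_4(0)$ and $R(2)=4=W_4(2)$, with $W_4(0),W_4(2)$ coming from \eqref{eq:Wn-even} and the corresponding moments of $K_0^3 I_0$ being among those evaluated in \cite{bbbg-bessel}. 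The recursion then propagates the equality to every integer $k\ge 0$.

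Finally I would apply Carlson's theorem to $f(k)=W_4(k)-R(k)$, which is analytic on $\operatorname{Re} k>-2$ and of exponential type there: for $\operatorname{Re} k\ge 0$ one has $|W_4(k)|\le 4^{\operatorname{Re} k}$ and a comparable bound for $R$. On the imaginary axis $\big|\,|z|^{iy}\big|=1$ gives $|W_4(iy)|\le 1$, so the growth of $f$ on that line is governed by $R$; there $4^{iy}$ has modulus $1$, the integral $\int_0^\infty t^{iy+1}K_0^3 I_0\,\md t$ is bounded independently of $y$, and standard Gamma asymptotics give $1/|\Gamma(1+iy/2)|^2=O\big(e^{\pi|y|/2}/|y|\big)$, so $|f(iy)|\le M e^{\pi|y|/2}$ with $\pi/2<\pi$. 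Carlson's theorem then forces $f\equiv 0$ on $\operatorname{Re} k\ge 0$, and since $W_4$ and $R$ are analytic on the connected region $\operatorname{Re} k>-2$ and agree on a half-plane, they agree throughout. I expect the main obstacle to have been the chain of transformations already executed above (Nicholson $\to$ Meijer~$G$ $\to$ Nesterenko $\to$ a single elliptic integral) needed to recognize the value at $k=1$; within the proof of the lemma proper the only delicate point is the growth estimate on the imaginary axis, where the factor $e^{\pi|y|/2}$ produced by the reciprocal squared Gamma is exactly offset by the boundedness of the integral and stays strictly below the Carlson threshold $\pi$.
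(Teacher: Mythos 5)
Your proposal is correct and follows essentially the same route as the paper: verify the identity at enough initial integer points (the even case from \cite{bbbg-bessel}, the odd case from the $k=-1$ and $k=1$ computations preceding the lemma), propagate along the shared three-term recursion to all non-negative integers, and then apply Carlson's theorem with the $e^{\pi|y|/2}$ bound on the imaginary axis coming from $1/|\Gamma(1+iy/2)|^2$. The only difference is one of detail: the paper compresses all of this into a single sentence referring back to the $W_3$ case, whereas you spell out the analyticity of the right-hand side on $\operatorname{Re} k>-2$ and the growth estimates explicitly.
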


\begin{theorem}[Alternative Meijer G representation for $W_4(s)$] For all $s$,
\begin{equation}
W_4(s) = \frac{2^{2s+1}}{\pi^2 \, \Gamma(\frac12(s+2))^2} \, G_{4,4}^{2,4}\left( {{1,1,1,\frac{s+3}{2}}\atop{\frac{s+2}2,\frac{s+2}2,\frac{s+2}2,\frac12}}\bigg| 1\right).
\end{equation}
\end{theorem}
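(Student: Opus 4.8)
The plan is to redo, this time for general $k$, the chain of reductions already spelled out in the cases $k=-1$ and $k=1$ just before the statement, and then to recognise the Meijer $G$-function that emerges as the one in the theorem (with $s=k$) after a harmless change of parameters. By Lemma \ref{L4} one may start from the Bessel-moment representation (\ref{k4}), which holds for $\Re k>-2$. First I would substitute Nicholson's integral $I_0(t)K_0(t)=\tfrac2\pi\int_0^{\pi/2}K_0(2t\sin a)\,\md a$ into the integral in (\ref{k4}); since $K_0>0$ on $(0,\infty)$, Tonelli lets us interchange the two integrations and arrive at the iterated integral (\ref{kk}).

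Next I would evaluate the inner $t$-integral $\ift t^{k+1}K_0(t)^2K_0(2t\sin a)\,\md t$ as a Meijer $G$-function by viewing it as a Mellin convolution: using the classical transforms $\ift t^{w-1}K_0(t)^2\,\md t=\tfrac{\sqrt\pi}{4}\,\Gamma(w/2)^3/\Gamma((w+1)/2)$ and $\ift t^{w-1}K_0(t)\,\md t=2^{w-2}\Gamma(w/2)^2$, the Parseval formula for the Mellin transform gives, in a suitable strip for $k$, the value $\tfrac{\sqrt\pi}{8\sin a}\,G_{3,3}^{2,3}\!\left({{\tfrac{1-k}2,\tfrac{1-k}2,\tfrac{1-k}2}\atop{\tfrac12,\tfrac12,-\tfrac k2}}\big|\sin^2 a\right)$, i.e.\ the $k$-general counterpart of the Meijer $G$-function displayed above. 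Substituting $t=\sin^2 a$ turns the outer integral into one of the Euler type (\ref{preeuler}), and (\ref{euler}) applies with $a=1$, $b=\tfrac12$ (so that $\Gamma(a-b)=\sqrt\pi$), producing $\tfrac18\,G_{4,4}^{2,4}\!\left({{1,\tfrac{1-k}2,\tfrac{1-k}2,\tfrac{1-k}2}\atop{\tfrac12,\tfrac12,\tfrac12,-\tfrac k2}}\big|1\right)$. Collecting the prefactor, $\tfrac{4^{k+2}}{\pi^2\,\Gamma(k/2+1)^2}\cdot\tfrac18=\tfrac{2^{2k+1}}{\pi^2\,\Gamma(k/2+1)^2}$, and then invoking the standard homogeneity of Meijer $G$-functions, $G_{p,q}^{m,n}\!\left({\mathbf a\atop\mathbf b}\big|1\right)=G_{p,q}^{m,n}\!\left({\mathbf a+\sigma\atop\mathbf b+\sigma}\big|1\right)$ (the $z=1$ case of $z^\sigma G_{p,q}^{m,n}(\mathbf a;\mathbf b;z)=G_{p,q}^{m,n}(\mathbf a+\sigma;\mathbf b+\sigma;z)$), with the shift $\sigma=(k+1)/2$ converts the parameter vectors into $\{1,1,1,\tfrac{k+3}2\}$ over $\{\tfrac{k+2}2,\tfrac{k+2}2,\tfrac{k+2}2,\tfrac12\}$, which is exactly the claimed evaluation with $s=k$. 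As a consistency check this reproduces the two Meijer $G$-functions recorded in the text for $k=-1$ and $k=1$, with $\sigma=0$ and $\sigma=1$ respectively.

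This establishes the identity for $k=s$ throughout an open subset of $\{\Re k>-2\}$ on which every step converges; it then extends to all $s$ by analytic continuation, since $W_4(s)$ is meromorphic by the continuation recalled in Section \ref{sec:pn} and the right-hand side is meromorphic in $s$ because the Mellin--Barnes integral for $G_{4,4}^{2,4}$ at argument $1$ depends meromorphically on its (affine-in-$s$) parameters, the factor $\Gamma(\tfrac{s+2}2)^{-2}$ supplying precisely the double zeros at the negative even integers needed to cancel the spurious poles of the $G$-function there; alternatively one may apply Carlson's theorem exactly as in the proof of Lemma \ref{L4}. The hard part will be the bookkeeping: tracking the Meijer $G$ indices and parameter lists faithfully through the Mellin-convolution step and through (\ref{euler}), and identifying the precise range of $\Re k$ for which the Parseval evaluation of the inner integral and the endpoint hypotheses of (\ref{euler}) at $t=0$ and $t=1$ are literally valid before continuation. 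Once that is in hand, the concluding symmetrisation is merely the observation that the unwieldy parameter list coming out of (\ref{euler}) is a common additive shift of the symmetric one in the statement.
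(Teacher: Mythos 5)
Your argument is correct and is essentially the paper's own proof: the paper disposes of the theorem in one line (``apply \eqref{euler} to \eqref{preeuler} for general $k$, and equality holds by Lemma \ref{L4}''), and you have simply filled in the general-$k$ bookkeeping that the text only displays in specialized form --- your intermediate $G_{3,3}^{2,3}$ with $k$ in the parameter list is the shift by $-\tfrac{k+1}{2}$ of the paper's version carrying $k$ in the prefactor $\sin^{-(k+2)}a$, so your final shift by $\tfrac{k+1}{2}$ lands exactly on the stated parameters, and your constant $\tfrac{4^{k+2}}{\pi^2\Gamma(k/2+1)^2}\cdot\tfrac18=\tfrac{2^{2k+1}}{\pi^2\Gamma(\frac{k+2}{2})^2}$ checks out. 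The only quibble is your parenthetical that $\Gamma(\tfrac{s+2}{2})^{-2}$ cancels the $G$-function's poles at the negative even integers: $W_4$ has genuine double poles there, so ``for all $s$'' must be read as an identity of meromorphic functions, which your Carlson/continuation alternative (the route the paper itself takes via Lemma \ref{L4}) already covers.
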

\begin{proof} Apply (\ref{euler}) to (\ref{preeuler}) for general $k$, and equality holds by Lemma \ref{L4}.
\end{proof}

Note that Lemma \ref{L4} combined with the known formula for $W_4(-1)$ in \cite{bsw-rw2} gives
\[ \frac{4}{\pi^3} \int_0^\infty K_0(t)^3I_0(t)\, \md t = \int_0^\infty J_0(t)^4\, \md t. \]

Armed with the knowledge of Lemma \ref{L4}, we may now resolve a very special but central case (corresponding to $n=2$) of Conjecture 1 in \cite{bsw-rw2}.
\begin{theorem}
For integer $s$,
\begin{equation} \label{conj}
W_4(s) = \sum_{j=0}^\infty \binom{s/2}{j}^2 W_3(s-2j).
\end{equation}
\end{theorem}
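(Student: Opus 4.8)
The identity \eqref{conj} relates the four-step moments to the three-step moments via a binomial convolution, and the natural strategy is to verify it first at nonnegative even integers $s=2k$ and then extend to all (integer, or even complex) $s$ by Carlson's theorem. At $s=2k$ the right-hand side is a finite sum, since $\binom{k}{j}^2$ vanishes for $j>k$, and both $W_3$ and $W_4$ at even arguments are given by the explicit combinatorial sums \eqref{eq:Wn-even}. So the plan is: (i) show \eqref{conj} holds for $s=2k$, $k\ge0$, by a purely combinatorial manipulation of multinomial sums; (ii) bound the growth of both sides as functions of $s$ on the imaginary axis so that Carlson applies; (iii) conclude that \eqref{conj} holds for all $s$ with $\operatorname{Re} s$ in the relevant range, in particular for all integers $s$.

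For step (i), the cleanest route is to use the Bessel-integral representations rather than the bare combinatorics. Writing $\phi=x_1+\dots+x_4$ as $(x_1+x_2)+(x_3+x_4)$, one has $|\phi|^2 = |u|^2 + |v|^2 + 2\operatorname{Re}(u\bar v)$ where $u=x_1+x_2$, $v=x_3+x_4$; expanding $|\phi|^{2k}=(|\phi|^2)^k$ and integrating the cross terms over the torus kills the odd contributions, leaving a convolution of the distributions of $|u|^2$ and $|v|^2$, each of which is a two-step quantity. That is essentially the statement that $p_4$ is a (multiplicative-type) convolution of $p_2$ with itself in a suitable sense. A slicker alternative, and the one I would actually carry out, uses the Bessel representation \eqref{eq:pnbessel}: from $J_0(xt) = \sum_j \binom{s/2}{j}^2(\dots)$-type expansions one expands $J_0^4 = J_0^3 \cdot J_0$ and matches against $J_0^2 \cdot J_0^2$; more precisely, Lemma~\ref{L4} already expresses $W_4$ through the same Bessel moment $\int_0^\infty t^{k+1}K_0(t)^3 I_0(t)\,\md t$ that governs $W_3$, and combining the $K_0^3 I_0$ integral for $W_4$ with the $K_0^2 I_0$ integral \eqref{k3} for $W_3$ via Nicholson's product formula $I_0(t)K_0(t)=\tfrac2\pi\int_0^{\pi/2}K_0(2t\sin a)\,\md a$ turns \eqref{conj} into an identity between Meijer $G$-functions (or, after the Euler-type integral \eqref{euler}, between the explicit $G_{4,4}^{2,4}$ for $W_4$ and a sum of the $_3F_2$'s for $W_3$), which can be checked termwise in the hypergeometric series.

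For step (ii), the growth estimate, I would lean on exactly the bound already used twice in the paper: the integrand $t^{k+1}K_0(t)^3 I_0(t)$ gives, on $k=iy$, growth like $e^{|y|\pi/2}$ for the $W_4$ side, comfortably below the threshold $\pi$ in Carlson's theorem; the $W_3$ side has the same bound from \eqref{k3}, and the binomial factor $\binom{s/2}{j}^2$ with $j$ running over a set whose size grows only polynomially contributes subexponentially in $|y|$, so the whole right-hand side still satisfies $|f(iy)|\le Me^{d|y|}$ with $d<\pi$. Hence the difference of the two sides is an entire function of exponential type less than $\pi$ vanishing at all nonnegative even integers (after rescaling $s\mapsto 2k$), so it vanishes identically. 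The main obstacle is step (i): making the combinatorial/Bessel matching genuinely rigorous — in particular justifying the interchange of the infinite sum over $j$ with the integral (needed when $s$ is not an even nonnegative integer so the sum is infinite) and handling the analytic continuation of the individual $_3F_2$'s — is where the real work lies, whereas the Carlson step is routine given the estimates the paper has already established.
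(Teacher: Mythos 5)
There is a genuine gap, and it sits exactly where you have placed the ``routine'' step. Your plan verifies \eqref{conj} only at the nonnegative even integers $s=2k$ and then invokes Carlson's theorem after the substitution $s\mapsto 2k$. But that substitution doubles the exponential type: if both sides grow like $e^{\pi|y|/2}$ on the imaginary axis in the variable $s$ (the bound the paper uses for \eqref{k3} and \eqref{k4}), then as functions of $k$ they grow like $e^{\pi|y|}$, which is exactly the excluded threshold $d=\pi$ in Carlson's theorem --- compare $\sin(\pi s/2)$, which vanishes at every even integer without vanishing identically. This is not a repairable technicality: the even case of \eqref{conj} is precisely the combinatorial identity already known from \cite{bnsw-rw} via \eqref{eq:Wn-even}, and the entire content of the theorem (the reason it resolves a case of Conjecture~1 of \cite{bsw-rw2}) is the extension to odd integers, which the even values genuinely do not determine. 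Note also that the theorem is stated only for integer $s$, so a Carlson argument, if it worked, would be proving something the authors conspicuously do not claim.

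The paper's actual route is different and you should compare it with your step (i). Both sides of \eqref{conj} satisfy the same three-term recurrence in $s$ (relating $s$, $s+2$, $s+4$), so the integers split into an even chain, already settled, and an odd chain, which is pinned down by \emph{two consecutive odd seed values}. The paper computes these at $s=-1$ and $s=-3$: using \eqref{kn} one writes $W_3(-1-2j)$ and $W_3(-3-2j)$ as moments $\int_0^\infty t^{2j}K_0(t)^3\,\md t$, interchanges the (positive-term) sum with the integral to resum the binomial series into $I_0(t)$, and identifies the result with $W_4(-1)$ and $\tfrac1{64}W_4(1)=W_4(-3)$ via Lemma~\ref{L4} and the reflection $W_4(-2k-1)\,2^{6k}=W_4(2k-1)$. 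Your instinct to bring in Lemma~\ref{L4} and the $K_0^3I_0$ Bessel moments is exactly right for producing these seed values, but you need to replace the even-integers-plus-Carlson architecture with the shared recurrence plus two odd-integer verifications.
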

\begin{proof}
In \cite{bnsw-rw} it is shown that both sides satisfy the same three term recurrence, and agree when $s$ is even. Therefore, we only need to show that the identity holds for two consecutive odd values of $s$.

For $s=-1$, the right-hand side of (\ref{conj}) is
\[ \sum_{j=0}^\infty \binom{-1/2}{j}^2 W_3(-1-2j) = \sum_{j=0}^\infty \frac{2^{2-2j}}{\pi^3 j!^2} \int_0^\infty t^{2j} K_0(t)^3 \,\md t \]
upon using (\ref{kn}), and after interchanging summation and integration (which is justified as all terms are positive), this reduces to
\[ \frac{4}{\pi^3}  \int_0^\infty K_0(t)^3I_0(t)\, \md t, \]
which is the value for $W_4(-1)$ by Lemma \ref{L4}.

We note that the recursion for $W_4(s)$ gives the pleasing reflection property
\[ W_4(-2k-1) \, 2^{6k} = W_4(2k-1). \]
In particular, $W_4(-3) = \frac{1}{64} \, W_4(1)$. Now computing the right-hand side of (\ref{conj}) at $s=-3$, and interchanging summation and integration as before, we obtain
\[ \sum_{j=0}^\infty \binom{-3/2}{j}^2 W_3(-3-2j) = \frac{4}{\pi^3} \int_0^\infty t^2  K_0(t)^3 I_0(t) \,\md t = \frac{1}{64}W_4(1) =  W_4(-3). \]
Therefore (\ref{conj}) holds when $s=-1, -3$, and thus holds for all integer $s$.
\end{proof}

\bigskip
\paragraph{\textbf{Acknowledgments}}
We are grateful to David Bailey for significant numerical assistance, Michael
Mossinghoff for pointing us to the Mahler measure conjectures via
\cite{rtv-mahler}, and Plamen Djakov and Boris Mityagin for correspondence
related to Theorem \ref{thm:deleadingcoeffx} and the history of their proof.
We are specially grateful to Don Zagier for not only providing us with
his proof of Theorem \ref{thm:deleadingcoeffx} but also for his enormous
amount of helpful comments and improvements.
We also thank the reviewer for helpful suggestions.

The first and the last authors thankfully acknowledge support by the Australian
Research Council.


\section*{Appendix. \textbf{A family of combinatorial identities}}

\medskip
\hbox to\hsize{\hss \uppercase{Don Zagier}\footnote{The original note is unchanged.}\hss}


\vskip5mm

The ``collateral result'' of  Djakov and Mityagin, \cite{dm1, dm2}, is the pair
of identities
\begin{align*}
  &\sum_{\substack{ -m<i_1<\dots<i_k<m\\ i_2-i_1,\dots,i_k-i_{k-1}\ge2 }}
  \;\,\prod_{s=1}^k(m^2-i_s^2)
  = \sigma_k(1^2,3^2,\dots,(2m-1)^2)\,,
  \displaybreak[2]\\
  &\sum_{\substack{ 1-m<i_1<\dots<i_k<m\\ i_2-i_1,\dots,i_k-i_{k-1}\ge2 }}
  \;\,\prod_{s=1}^k(m-i_s)(m+i_s-1)
  = \sigma_k(2^2,4^2,\dots,(2m-2)^2)\,,
\end{align*}
where $m$ and $k$ are integers with $m\ge k\ge0$ and $\sigma_k$ denotes the $k$th elementary symmetric
function.  By setting $j_s=i_s+m$ in the first sum and
$j_s=i_s+m-1$ in the second, we can rewrite these formulas more uniformly as\footnote{Note that \eqref{eq:FMk} is precisely Theorem \ref{thm:deleadingcoeffx}.}
\begin{align}\label{eq:FMk}
F_{M,k}(M) = \begin{cases}
  \sigma_k(1^2,3^2,\dots,(M-1)^2)&\text{if $M$ is even,} \\
  \sigma_k(2^2,4^2,\dots,(M-1)^2)&\text{if $M$ is odd,}
\end{cases}
\end{align}
where $F_{M,k}(X)$ is the polynomial in $X$ (non-zero only if $M\ge2k\ge0$) defined by
\begin{align}\label{eq:FMkX}
  F_{M,k}(X) = \sum_{\substack{ 0<j_1<\dots<j_k<M\\ j_2-j_1,\dots,j_k-j_{k-1}\ge2 }}
  \prod_{s=1}^k j_s(X-j_s)\;.
\end{align}
%
The advantage of introducing the free variable $X$ in \eqref{eq:FMkX} is that
the functions $F_{M,k}(X)$ satisfy the recursion
\begin{align}\label{eq:FMkrec}
  F_{M+1,k+1}(X) - F_{M,k+1}(X) = M\,(X-M)\,F_{M-1,k}(X),
\end{align}
because the only paths that are counted on the left are those with $0<j_1<\dots<j_k<j_{k+1}=M$.

It is also advantageous to introduce the polynomial generating function
$$ \Phi_M = \Phi_M(X,u) = \sum_{0\le k\le M/2}(-1)^k\,F_{M,k}(X)\,u^{M-2k}\,,  $$
the first examples being
\begin{align*}
 &\Phi_0=1\,,\qquad \Phi_1=u\,,\qquad \Phi_2=u^2-(X-1)\,,\qquad \Phi_3=u^3-(3X-5)u\,,\\
 &\Phi_4=u^4 -(6X-14)u^2 +(3X^2-12X+9)\,,\\
 &\Phi_5=u^5-(10X-30)u^3+(15X^2-80X+89)\,,\\
 &\Phi_6= u^6 - (15X - 55)u^4 + (45X^2 - 300X + 439)u^2 - (15X^3 - 135X^2 + 345X - 225)\,.
\end{align*}
 In terms of this generating function, the recursion \eqref{eq:FMkrec} becomes
 \begin{align}\label{eq:Phirec}
 \Phi_{M+1} = u\,\Phi_M - M(X-M)\,\Phi_{M-1}
 \end{align}
and the identity \eqref{eq:FMk} to be proved can be written succinctly as
\begin{align}\label{eq:idPhi}
  \Phi_M(M,u) = \prod_{\substack{ |\lambda|<M\\ \lambda\not\equiv M\pmod2 }} (u-\lambda)\;.
\end{align}
Denote by $P_M(u)$ the polynomial on the right-hand side of \eqref{eq:idPhi}.  Looking for other pairs $(M,X)$
where $\Phi_M(X,u)$ has many integer roots, we find experimentally that this happens whenever $M-X$
is a non-negative integer, and studying the data more closely we are to conjecture the two formulas
\begin{align}\label{eq:Phi1}
 \Phi_M(M-n,u) = \frac1{2^n}\,\sum_{j=0}^n\binom nj\,P_M(u-n+2j)\qquad(M,\,n\ge0)
\end{align}
(a generalization of \eqref{eq:idPhi}) and
\begin{align}\label{eq:Phi2}
 \Phi_{M+n}(M,u) = \Phi_M(M,u)\,\Phi_n(-M,u)\qquad(M,\,n\ge0)\,.
\end{align}
Formula \eqref{eq:Phi2} is easy to prove, since it holds for $n=0$ trivially and for $n=1$ by~\eqref{eq:Phirec}
and since both sides satisfy the recursion $y_{n+1}=u\,y_n+n(M+n)\,y_{n-1}$ for $n=1,2,\dots$ by \eqref{eq:Phirec}.
On the other hand, combining \eqref{eq:idPhi}, \eqref{eq:Phi1} and \eqref{eq:Phi2} leads to the conjectural formula
$$  \Phi_n(-M,u) = \frac1{2^n}\,\sum_{j=0}^n\binom nj\,\frac{P_{M+n}(u-n+2j)}{P_n(u)}
 = n!\,\sum_{j=0}^n(-1)^j\binom{\frac{-u-M-1}2}j\,\binom{\frac{u-M-1}2}{n-j} $$
or, renaming the variables,
\begin{align}\label{eq:Phi3}
  \frac1{M!}\,\Phi_M(x+y+1,y-x) = \sum_{j=0}^M(-1)^j\binom xj\,\binom y{M-j}\;.
\end{align}
To prove this, we see by \eqref{eq:Phirec} that, denoting by $G_M=G_M(x,y)$ the expression on the right,
it suffices to prove the recursion $(M+1)G_{M+1}=(y-x)G_M+(M-x-y-1)G_{M-1}$. This is an easy binomial
coefficient identity, but once again it is easier to work with generating functions: the sum
\begin{align}\label{eq:sumG}
 \mathcal G(x,y;T) := \sum_{M=0}^\infty G_M(x,y) \,T^m = (1-T)^x\,(1+T)^y
\end{align}
satisfies the differential equation
$$\dfrac1{\mathcal G}\dfrac{\partial\mathcal G}{\partial T}=\dfrac y{1+T}-\dfrac x{1-T}$$
or
$$\dfrac{\partial\mathcal G}{\partial T}=(y-x)\,\mathcal G + \left(T\dfrac\partial{\partial T}-x-y\right)\mathcal G,$$
and this is equivalent to the desired recursion.

We can now complete the proof of \eqref{eq:FMk}.  Rewriting \eqref{eq:sumG} in the form
$$  \frac1{M!}\,\Phi_M(X,u) = \text{coeff}_{T^M}\left((1-T)^{\frac{X-u-1}2}\,(1+T)^{\frac{X+u-1}2}\right)\,,   $$
we find that, for $1\le j\le M$,
$$\frac1{M!}\, \Phi_M(M,M+1-2j) = \text{coeff}_{T^M}\left((1-T)^{j-1}\,(1+T)^{M-j}\right) = 0  $$
and hence that the polynomial on the left-hand side of \eqref{eq:idPhi} is divisible by the polynomial on the right,
which completes the proof since both are monic of degree~$M$ in~$u$.

\bibliography{densities-refs}

\begin{thebibliography}{BNSW09}

\bibitem[Aka09]{aka}
Hirotaka Akatsuka.
\newblock Zeta {M}ahler measures.
\newblock {\em Journal of Number Theory}, 129(11):2713--2734, 2009.

\bibitem[BB91]{bb-cubicagm}
J.~M. Borwein and P.~B. Borwein.
\newblock A cubic counterpart of {J}acobi's identity and the {AGM}.
\newblock {\em Transactions of the American Mathematical Society},
  323(2):691--701, 1991.

\bibitem[BB98]{borwein-piagm}
Jonathan~M. Borwein and Peter~B. Borwein.
\newblock {\em Pi and the AGM: A Study in Analytic Number Theory and
  Computational Complexity}.
\newblock Wiley, 1998.

\bibitem[BB10]{bb-combat}
Jonathan~M. Borwein and David~H. Bailey.
\newblock Hand-to-hand combat: Experimental mathematics with
  multi-thousand-digit integrals.
\newblock {\em Journal of Computational Science}, 2010.
\newblock In press, available at:
  \url{http://www.carma.newcastle.edu.au/~jb616/combat.pdf}.

\bibitem[BBBG08]{bbbg-bessel}
D.~H. Bailey, J.~M. Borwein, D.~J. Broadhurst, and M.~L. Glasser.
\newblock Elliptic integral evaluations of {B}essel moments and applications.
\newblock {\em Phys. A: Math. Theor.}, 41:5203--5231, 2008.

\bibitem[BBG94]{garvan}
J.M. Borwein, P.B. Borwein, and F.~Garvan.
\newblock Some cubic modular identities of ramanujan.
\newblock {\em Trans. Amer. Math. Soc.}, 343:35--48, 1994.

\bibitem[BNSW09]{bnsw-rw}
Jonathan~M. Borwein, Dirk Nuyens, Armin Straub, and James Wan.
\newblock Random walk integrals.
\newblock {\em The Ramanujan Journal}, 2009.
\newblock Submitted, available at:
  \url{http://www.carma.newcastle.edu.au/~jb616/walks.pdf}.

\bibitem[Boy81]{boyd}
David~W. Boyd.
\newblock Speculations concerning the range of {M}ahler's measure.
\newblock {\em Canad. Math. Bull.}, 24:453--469, 1981.

\bibitem[Bro09]{broadhurst-rw}
D.~J. Broadhurst.
\newblock {B}essel moments, random walks and {C}alabi-{Y}au equations.
\newblock Preprint, 2009.

\bibitem[BS11]{logsin1}
Jonathan~M. Borwein and Armin Straub.
\newblock Log-sine evaluations of {M}ahler measures.
\newblock {\em J. Aust Math. Soc.}, 2011.
\newblock In press, \href {http://arxiv.org/abs/1103.3893}
  {\path{arXiv:1103.3893}}.

\bibitem[BSW11]{bsw-rw2}
Jonathan~M. Borwein, Armin Straub, and James Wan.
\newblock Three-step and four-step random walk integrals.
\newblock {\em Experimental Mathematics}, 2011.
\newblock In press, available at:
  \url{http://www.carma.newcastle.edu.au/~jb616/walks2.pdf}.

\bibitem[BZ92]{bz92}
J.~M. Borwein and I.~J. Zucker.
\newblock Fast evaluation of the gamma function for small rational fractions
  using complete elliptic integrals of the first kind.
\newblock {\em IMA J. Numer. Anal.}, 12(4):519--526, 1992.

\bibitem[BZB08]{bzb}
J.~M. Borwein, I.~J. Zucker, and J.~Boersma.
\newblock The evaluation of character {E}uler double sums.
\newblock {\em The Ramanujan Journal}, 15(3):377--405, 2008.

\bibitem[Cra09]{crandall-rw}
R.~E. Crandall.
\newblock Analytic representations for circle-jump moments.
\newblock {\em PSIpress}, 2009.
\newblock Preprint, available at:
  \url{http://www.perfscipress.com/papers/analyticWn\_psipress.pdf}.

\bibitem[CZ10]{cz-apery}
H.~H. Chan and W.~Zudilin.
\newblock New representations for {A}p\'ery-like sequences.
\newblock {\em Mathematika}, 56:107--117, 2010.

\bibitem[DM04]{dm1}
Plamen Djakov and Boris Mityagin.
\newblock Asymptotics of instability zones of {H}ill operators with a two term
  potential.
\newblock {\em C. R. Math. Acad. Sci. Paris}, 339(5):351--354, 2004.

\bibitem[DM07]{dm2}
Plamen Djakov and Boris Mityagin.
\newblock Asymptotics of instability zones of the {H}ill operator with a two
  term potential.
\newblock {\em J. Funct. Anal.}, 242(1):157--194, 2007.

\bibitem[Fet63]{fettis63}
H.~E. Fettis.
\newblock On a conjecture of {K}arl {P}earson.
\newblock {\em Rider Anniversary Volume}, pages 39--54, 1963.

\bibitem[Fin05]{finch}
S.~Finch.
\newblock Modular forms on {$SL_2(\mathbb{Z})$}.
\newblock Preprint, 2005.

\bibitem[FS09]{anacomb}
P.~Flajolet and R.~Sedgewick.
\newblock {\em Analytic Combinatorics}.
\newblock Cambridge University Press, 2009.

\bibitem[H{\"o}r89]{hoermander-lpde1}
L.~H{\"o}rmander.
\newblock {\em The Analysis of Linear Partial Differential Operators I}.
\newblock Springer, 2nd edition, 1989.

\bibitem[Hug95]{hughes-rw}
Barry~D. Hughes.
\newblock {\em Random Walks and Random Environments}, volume~1.
\newblock Oxford University Press, 1995.

\bibitem[Inc26]{ince-ode}
Edward~L. Ince.
\newblock {\em Ordinary Differential Equations}.
\newblock Green and Co., London, 1926.

\bibitem[Klu06]{Klu06}
J.~C. Kluyver.
\newblock A local probability problem.
\newblock {\em Nederl. Acad. Wetensch. Proc.}, 8:341--350, 1906.

\bibitem[Luk69]{luke-sf}
Y.~L. Luke.
\newblock {\em The Special Functions and Their Approximations}, volume~1.
\newblock Academic Press, 1969.

\bibitem[ML86]{ml-xmellin}
O.~P. Misra and Jean~L. Lavoine.
\newblock {\em Transform Analysis of Generalized Functions}.
\newblock Elsevier, Amsterdam, 1986.

\bibitem[Nes03]{nest}
Y.~V. Nesterenko.
\newblock Integral identities and constructions of approximations to
  zeta-values.
\newblock {\em J. Th\'{e}or. Nombres Bordeaux}, 15(2):535--550, 2003.

\bibitem[Pea05a]{Pea05b}
K.~Pearson.
\newblock The problem of the random walk.
\newblock {\em Nature}, 72:342, 1905.

\bibitem[Pea05b]{Pea05}
K.~Pearson.
\newblock The random walk.
\newblock {\em Nature}, 72:294, 1905.

\bibitem[Pea06]{Pea06}
K.~Pearson.
\newblock A mathematical theory of random migration.
\newblock In {\em Drapers Company Research Memoirs}, number~3 in Biometric
  Series. Cambridge University Press, 1906.

\bibitem[PWZ06]{aeqb}
M.~Petkovsek, H.~Wilf, and D.~Zeilberger.
\newblock {\em A=B}.
\newblock A. K. Peters, 2006.

\bibitem[Ray05]{Ray05}
Lord Rayleigh.
\newblock The problem of the random walk.
\newblock {\em Nature}, 72:318, 1905.

\bibitem[Rog09]{rogers-5f4}
M.~D. Rogers.
\newblock New {$_5F_4$} hypergeometric transformations, three-variable {M}ahler
  measures, and formulas for $1/\pi$.
\newblock {\em Ramanujan Journal}, 18(3):327--340, 2009.

\bibitem[RVTV04]{rtv-mahler}
F.~Rodriguez-Villegas, R.~Toledano, and J.~D. Vaaler.
\newblock Estimates for {M}ahler's measure of a linear form.
\newblock {\em Proceedings of the Edinburgh Mathematical Society},
  2(47):473--494, 2004.

\bibitem[SC67]{sc67}
Atle Selberg and S.~Chowla.
\newblock On epstein's zeta-function.
\newblock {\em J. Reine Angew. Math.}, 227:86--110, 1967.

\bibitem[Tit39]{titchmarsh}
E.~Titchmarsh.
\newblock {\em The Theory of Functions}.
\newblock Oxford University Press, 2nd edition, 1939.

\bibitem[Ver04]{verrill}
H.~A. Verrill.
\newblock Sums of squares of binomial coefficients, with applications to
  {P}icard-{F}uchs equations.
\newblock Preprint, 2004.

\bibitem[Wat41]{watson-bessel}
G.~N. Watson.
\newblock {\em A Treatise on the Theory of Bessel Functions}.
\newblock Cambridge University Press, 2nd edition, 1941.

\end{thebibliography}
\bibliographystyle{alpha}

\end{document}